\definecolor{vegasgold}{rgb}{0.77, 0.7, 0.35}
\definecolor{darkgoldenrod}{rgb}{0.72, 0.53, 0.04}
\definecolor{gold(metallic)}{rgb}{0.83, 0.69, 0.22}
\DeclareFontFamily{U}{wncy}{}
\DeclareFontShape{U}{wncy}{m}{n}{<->wncyr10}{}
\DeclareSymbolFont{mcy}{U}{wncy}{m}{n}
\DeclareMathSymbol{\Sh}{\mathord}{mcy}{"58}
\newtheorem{theorem}{Theorem}[section]
\newtheorem{Lemma}[theorem]{Lemma}
\newtheorem*{theorem*}{Theorem}
\newtheorem*{ass*}{Assumption}
\newtheorem{definition}[theorem]{Definition}
\newtheorem{corollary}[theorem]{Corollary}
\newtheorem{remark}[theorem]{Remark}
\newtheorem{conjecture}[theorem]{Conjecture}
\newtheorem{proposition}[theorem]{Proposition}
\newcommand{\fq}{\mathfrak{q}}
\newcommand{\fB}{\mathfrak{B}}
\newcommand{\Z}{\mathbb{Z}}
\newcommand{\p}{\mathfrak{p}}
\newcommand{\Q}{\mathbb{Q}}
\newcommand{\F}{\mathbb{F}}
\newcommand{\cO}{\mathcal{O}}
\newcommand{\fp}{\mathfrak{p}}
\newcommand{\disc}{\mathrm{Disc}}
\newcommand{\rdisc}{\mathrm{disc}}
\newcommand{\ind}{\mathrm{ind}}
\newcommand{\op}[1]{\operatorname{#1}}
\numberwithin{equation}{section}
\begin{document}

\title[On Malle's conjecture for $S_n\times G$]{On Malle's conjecture for the product of symmetric and nilpotent groups}

\author[H.~Mishra]{Hrishabh Mishra}
\address[H.~Mishra]{Chennai Mathematical Institute, H1, SIPCOT IT Park, Kelambakkam, Siruseri, Tamil Nadu 603103, India}
\email{hrishabh@cmi.ac.in}

\author[A.~Ray]{Anwesh Ray}
\address[A.~Ray]{Chennai Mathematical Institute, H1, SIPCOT IT Park, Kelambakkam, Siruseri, Tamil Nadu 603103, India}
\email{anwesh@cmi.ac.in}

\keywords{}
\subjclass[2020]{}

\maketitle

\begin{abstract}
 Let $G$ be a finite nilpotent group and $n\in \{3,4, 5\}$. Consider $S_n\times G$ as a subgroup of $S_n\times S_{|G|}\subset S_{n|G|}$, where $G$ embeds into the second factor of $S_n\times S_{|G|}$ via the regular representation. Over any number field $k$, we prove the strong form of Malle's conjecture (cf. \cite[p.316]{malle2002distribution}) for $S_n\times G$ viewed as a subgroup of $S_{n|G|}$. Our result requires that $G$ satisfies some mild conditions. 
\end{abstract}

\section{Introduction}
\subsection{Motivation and historical context}
\par Let $k$ be a number field and let $n\in \Z_{\geq 1}$. Let $\mathcal{G}$ be a transitive subgroup of $S_n$. Given a number field extension $K/k$, denote by $\widetilde{K}$ its Galois closure over $k$. Suppose that $[K:k]=n$. We enumerate the embeddings $\iota_j :K \hookrightarrow \bar{K}$ for $j=1, \dots, n$, and for $\sigma\in \mathcal{G}$, consider the composite \[\iota_{\sigma(j)}: K\xrightarrow{\iota_j} \bar{K}\xrightarrow{\sigma} \bar{K}.\]  This gives rise to a natural permutation representation and we realize $\op{Gal}(\widetilde{K}/k)$ as a transitive subgroup of $S_n$. Set $\op{disc}(K/k)$ to denote the relative discriminant of $K/k$ and take \[\op{Disc}(K):=| \op{Norm}_{k/\Q}(\op{disc}(K/k))|.\] For $X>0$, let $N_k(\mathcal{G}; X)$ be the number of extensions $K/k$ with $[K:k]=n$ for which $\op{Gal}(\widetilde{K}/k)$ is isomorphic to $\mathcal{G}$ as a permutation subgroup of $S_n$, and $\op{Disc}(K)\leq X$. The quantity $N_k(\mathcal{G}; X)$ depends not only on $\mathcal{G}$ and $k$, but also on the embedding of $\mathcal{G}$ in $S_n$, and this is suppressed in our notation. For $g\in S_n$, set \begin{equation}\label{index definition}
    \op{ind}(g):=n-\text{number of orbits for }g.
\end{equation} We set $1=1_{\mathcal{G}}$ to denote the identity element in $\mathcal{G}$ and $\mathcal{G}^\ast:=\mathcal{G}\backslash \{1\}$.
\par Malle made predictions about the asymptotic growth of $N_k(\mathcal{G};X)$ as $X\rightarrow \infty$. We define
$$a(\mathcal{G}):=\left(\op{min}\{\op{ind}(g)\mid g\in \mathcal{G}^\ast\}\right)^{-1}.$$
The absolute Galois group $G_k:=\op{Gal}(\overline{\Q}/k)$ of $k$ acts on the set of conjugacy classes $C(\mathcal{G})$ of $\mathcal{G}$ via the action of $\overline{\Q}$-characters of $\mathcal{G}$. We define
\[
b(k,\mathcal{G}):=\#\{C \in C(\mathcal{G}) : \op{ind}(C)=\op{ind}(\mathcal{G})\}/G_k.
\]
The following conjecture (cf. \cite[Conjecture 1.1]{malle2004distribution}) is referred to as the \emph{strong form of Malle's conjecture}.
\begin{conjecture}\label{malle's}
    We have that
    \[N_k(\mathcal{G};X)\sim c(k, \mathcal{G}) X^{a(\mathcal{G})}(\log X)^{b(k, \mathcal{G})-1},\]
    for some constant $c(k, \mathcal{G})>0$.
\end{conjecture}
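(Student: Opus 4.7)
The plan is to prove Conjecture~\ref{malle's} for $\mathcal{G}=S_n\times G\subset S_{n|G|}$ by decomposing an extension with this Galois type into two simpler extensions and then counting pairs. If $\op{Gal}(\widetilde{K}/k)\cong \mathcal{G}$, then $K$ is the fixed field of the point stabilizer $S_{n-1}\times\{1\}$. Set $L:=\widetilde{K}^{S_{n-1}\times G}$ and $M:=\widetilde{K}^{S_n\times\{1\}}$. Then $L/k$ has degree $n$ with $\op{Gal}(\widetilde{L}/k)\cong S_n$, and $M/k$ is Galois with group $G$; moreover $\widetilde{K}=\widetilde{L}\cdot M$ and $K=LM$. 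The map $K\mapsto (L,M)$ is a bijection onto the set of such pairs satisfying $\widetilde{L}\cap M=k$. Since $G$ is nilpotent and $S_n^{\op{ab}}=C_2$, the only possible obstruction to linear disjointness is a shared quadratic subfield, which can occur only when $2\mid |G|$.

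First I would compute the invariants in Conjecture~\ref{malle's}. For $(g,h)\in S_n\times G$ acting on $\{1,\dots,n\}\times G$, if $h=1$ then the orbits are Cartesian products and $\op{ind}(g,1)=|G|\cdot\op{ind}(g)$; if $h\neq 1$, then left multiplication by $h$ is fixed-point free on $G$, so every orbit of $(g,h)$ has size divisible by $|h|\geq 2$, forcing $\op{ind}(g,h)\geq n|G|/2>|G|$ for $n\geq 3$. Since the minimum $\op{ind}$ over $S_n^\ast$ is $1$ (attained by transpositions), the unique minimizing conjugacy class of $\mathcal{G}$ is $(\tau,1)$ with $\tau$ a transposition. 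Its character values are integers, hence $G_k$-rational, so $a(\mathcal{G})=1/|G|$ and $b(k,\mathcal{G})=1$. The target asymptotic is therefore $N_k(\mathcal{G};X)\sim cX^{1/|G|}$ with no logarithmic factor.

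The conductor-discriminant formula applied to $\widetilde{K}=\widetilde{L}\cdot M$, combined with linear disjointness, yields
\[
\op{Disc}(K)=\op{Disc}(L)^{|G|}\cdot\op{Disc}(M)^{n}\cdot\mathcal{E}(L,M),
\]
where $\mathcal{E}(L,M)$ is a correction supported at primes ramified in both $L$ and $M$. For $n\in\{3,4,5\}$ the theorems of Davenport-Heilbronn (extended to number fields by Datskovsky-Wright) and Bhargava yield $\#\{L:\op{Disc}(L)\leq Y\}\sim c_L Y$, while the known cases of Malle's conjecture for nilpotent $G$ in the regular representation (due to Kl\"uners and subsequent refinements) give $\#\{M:\op{Disc}(M)\leq Y\}\sim c_G Y^{a(G^{\op{reg}})}(\log Y)^{b(G)-1}$ with $a(G^{\op{reg}})=p/(|G|(p-1))$, where $p$ is the least prime dividing $|G|$. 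The hypothesis $n\geq 3$ gives $n/|G|>a(G^{\op{reg}})$, so the Dirichlet series $\sum_M\op{Disc}(M)^{-n/|G|}$ converges absolutely, and an Abel summation combining the two counts over the region $\op{Disc}(L)^{|G|}\op{Disc}(M)^n\leq X$ gives
\[
N_k(\mathcal{G};X)\sim c_L\biggl(\sum_M\op{Disc}(M)^{-n/|G|}\biggr)X^{1/|G|}.
\]

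The main obstacle is controlling errors uniformly. The $S_n$-field count for $L$ must hold with a power-saving error that survives being integrated against the ramification data of $M$ and against local conditions at primes in the support of $\mathcal{E}(L,M)$. In parallel, an inclusion-exclusion sieve is needed to discard the pairs with $\widetilde{L}\cap M\supsetneq k$, which reduces to counting $S_n$- and $G$-extensions sharing a fixed $C_2$-subextension; this step is vacuous when $2\nmid |G|$ and only contributes a lower-order term when $2\mid |G|$, but it is the point at which the "mild conditions on $G$" from the abstract are almost certainly imposed.
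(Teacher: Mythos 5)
Your proposal follows the same overall architecture as the paper (which in turn follows Wang \cite{Wang2020}): decompose a $\mathcal{G}=S_n\times G$ field into an $S_n$-field $L$ and a $G$-field $M$, compute $a(\mathcal{G})=1/|G|$ and $b(k,\mathcal{G})=1$ from the index of a transposition, and count pairs ordered by $\op{Disc}(L)^{|G|}\op{Disc}(M)^n$ modulo a correction at commonly ramified primes. The invariant computations you give are correct, and the bijection with linearly disjoint pairs is sound; note, though, that under all three hypotheses of the theorem one always has $2\nmid|G|$, so linear disjointness is automatic and the sieve you describe at the end is not where the ``mild conditions'' on $G$ earn their keep.

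The genuine gap is that your final paragraph names the difficulty but contains no argument for it, and that difficulty \emph{is} the theorem. Because $\op{Disc}(K)\neq\op{Disc}(L)^{|G|}\op{Disc}(M)^n$ at primes ramified in both fields, ordering pairs by the product $\op{Disc}(L)^{|G|}\op{Disc}(M)^n$ does not give $N_k(\mathcal{G};X)$; one must pass from the product ordering to the true discriminant ordering, and this requires a summation over local ramification configurations of $(L,M)$ whose total contribution must be shown to converge. Concretely the paper does this by: (i) proving sharp lower bounds on $\op{ind}(\sigma,g)$ for $(\sigma,g)\in S_n\times G$ (Proposition \ref{ind(sigma, g) propn}), which is where the hypotheses $2\nmid|G|$, $2,3\nmid|G|$, $2,3,5\nmid|G|$ are actually used — they force $\ell_G$ large enough that the relevant exponent $\delta$ in Lemma \ref{prod res bound} is $<-1$; (ii) a uniformity estimate for $S_n$-extensions with prescribed totally/over-ramified primes (Theorem \ref{est_sn}, quoting Davenport--Heilbronn, Wang, Bhargava); (iii) a new uniformity estimate for nilpotent $G$-extensions with prescribed divisibility of the discriminant (Proposition \ref{loc-uni-est}), proved via the Koymans--Pagano parameterization — this is the main technical contribution of the paper and has no counterpart in your sketch; and (iv) a truncated invariant $\disc_Y$ interpolating between the true discriminant and the product, for which the asymptotic $N_{k,Y}\sim c(Y)X^{1/|G|}$ is established (Proposition \ref{bound_Y}), followed by a tail bound $N_k-N_{k,Y}\to 0$ as $Y\to\infty$ using (i)--(iii). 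Your Abel-summation display would recover Proposition \ref{bound_Y} for small $Y$ but not the passage to $N_k(\mathcal{G};X)$ itself, and ``power-saving error that survives being integrated against the ramification data'' is a placeholder for steps (i)--(iv) rather than a proof of them.
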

We also have the following weak conjecture (cf. \cite[p.316]{malle2002distribution}) which is referred to as the \emph{weak form of Malle's conjecture}
\begin{conjecture}\label{weak malle's}
     For all $\epsilon>0$,
    \[X^{a(\mathcal{G})}\ll_{k, \mathcal{G}} N_{k}(\mathcal{G};X)\ll_{k, \mathcal{G}, \epsilon}X^{a(\mathcal{G})+\epsilon}.\]
\end{conjecture}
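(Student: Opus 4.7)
The plan is to reduce Malle's conjecture for $\mathcal{G}=S_n\times G\subset S_{n|G|}$ to the already-known cases $S_n$ (for $n\in\{3,4,5\}$, due to Davenport--Heilbronn for $n=3$ and Bhargava for $n\in\{4,5\}$) and $G$ nilpotent (due to Kl\"uners--Malle, building on Wright's theorem for abelian groups). The starting observation is that a Galois $S_n\times G$-extension $\widetilde{K}/k$ is canonically the compositum $\widetilde{K}=\widetilde{K_1}\cdot K_2$ of an $S_n$-extension (corresponding to the $S_n$ factor) and a $G$-extension (corresponding to the $G$ factor), which must be linearly disjoint. The field $K$ of degree $n|G|$ cut out by the point stabilizer of $(1,1)\in\{1,\dots,n\}\times G$ is then recovered as the compositum $K=K_1\cdot K_2$, where $K_1/k$ is an $S_n$-extension's natural degree-$n$ subfield and $K_2/k$ is the $G$-Galois extension (entering as the whole of itself because $G$ acts regularly). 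Thus parametrising $\mathcal{G}$-extensions is the same as parametrising admissible pairs $(K_1,K_2)$.

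Next I would compute the discriminant. The permutation representation on $\{1,\dots,n\}\times G$ decomposes orbits of $(g,h)$ as $\op{orb}(g)\cdot\op{orb}_G(h)/\ldots$ — more precisely, using the cycle structure of $g$ on $\{1,\dots,n\}$ and that $h$ acts by left translation on each copy of $G$, one shows $\op{ind}(g,h)=|G|\op{ind}(g)+n\op{ind}_G(h)$, where $\op{ind}_G(h)=|G|-|G|/\op{ord}(h)$ is the index of the regular representation. The conductor--discriminant formula then yields, up to bounded local factors at ramified primes,
\[
\op{Disc}(K)\;=\;\op{Disc}(K_1)^{|G|}\cdot \op{Disc}(K_2)^{n/[\ldots]},
\]
and more importantly the value of the invariant $a(\mathcal{G})^{-1}=\min_{(g,h)\ne 1}\op{ind}(g,h)$ is $\min(|G|\cdot\op{ind}(S_n),n\cdot a(G)^{-1})$, attained at elements of minimal index in the dominating factor. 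The orbit count of conjugacy classes of minimal index under $G_k$ gives $b(k,\mathcal{G})$ as a sum/combination of the corresponding invariants for each factor.

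With the discriminant comparison in hand, I would perform a Dirichlet convolution: writing
\[
N_k(\mathcal{G};X)=\sum_{\substack{K_1,K_2\\ \op{Disc}(K_1)^{|G|}\op{Disc}(K_2)^{n}\leq X}} \mathbf{1}_{\text{linearly disjoint}}\cdot(\text{mult.\ factor}),
\]
and apply partial summation to the known asymptotics $N_k(S_n;Y)\sim c_n Y$ and $N_k(G;Z)\sim c_G Z^{a(G)}(\log Z)^{b(k,G)-1}$. A standard Tauberian/convolution calculation (as in Wright or Wood) recovers an asymptotic of the form $c X^{a(\mathcal{G})}(\log X)^{b(k,\mathcal{G})-1}$, with the power of the logarithm matching the predicted $b$ because the two sources of log-power (from each factor with index achieving the min) add correctly. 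Disjointness is enforced via inclusion--exclusion; since non-disjoint pairs contribute through quotients of $S_n\times G$ of strictly smaller index, their contribution is absorbed in a lower-order term.

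The main obstacle, as I see it, is the bookkeeping around the linear disjointness and the lower-order corrections coming from shared ramification: one must show that the savings in going to sub-Galois pairs give an asymptotic strictly smaller than $X^{a(\mathcal{G})}(\log X)^{b(k,\mathcal{G})-1}$. Here the ``mild conditions'' on $G$ alluded to in the abstract presumably enter — for example to rule out accidental coincidences in the minimal-index conjugacy classes, or to ensure that the exponent $a(G)$ in the nilpotent count is strictly smaller than what is lost by disjointness failures. A secondary technical point is handling the tail of primes ramified in both $K_1$ and $K_2$ uniformly in $X$, which requires a sieve-style error term for $N_k(S_n;Y)$ with ramification restrictions, such as those made available by Bhargava--Shankar--Wang for $n\in\{4,5\}$ and by Belabas--Bhargava--Pomerance for $n=3$.
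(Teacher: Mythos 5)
The statement you are addressing, Conjecture~\ref{weak malle's}, is an open conjecture for arbitrary transitive $\mathcal{G}\subset S_n$; the paper does not prove it and does not claim to. What the paper proves is the \emph{strong} form (Theorem~\ref{main theorem}), which implies the weak form, only for $\mathcal{G}=S_n\times G$ under the stated hypotheses on $G$. Your proposal is written as an attempt at that special case, so I will evaluate it on those terms; your overall decomposition into pairs $(K_1,K_2)$ is indeed the one the paper uses, following Wang.

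There are two substantive gaps. The first is the index formula. When $\sigma\in S_n$ has cycle lengths coprime to those of $\tau\in S_m$, one actually gets
\[
\ind(\sigma,\tau)=m\,\ind(\sigma)+n\,\ind(\tau)-\ind(\sigma)\ind(\tau),
\]
not $m\,\ind(\sigma)+n\,\ind(\tau)$; more generally $\ind(\sigma,\tau)=nm-\sum_{i,j}\gcd(|c_i|,|d_j|)$ (Proposition~\ref{tr places}, Lemma~\ref{index_comp}). The missing cross term is what causes $\disc(KL)$ to be \emph{strictly smaller} than $\disc(K)^{|G|}\disc(L)^{n}$ at primes ramified in both $K$ and $L$; your formula makes the discrepancy vanish, and hence conceals the entire difficulty of the upper bound. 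In particular, your ``standard Tauberian/convolution calculation'' is not available: the set $\{(K,L):\disc(KL)\le X\}$ can be much larger than $\{(K,L):\disc(K)^{|G|}\disc(L)^n\le X\}$, so knowing the main terms for $N_k(S_n;\cdot)$ and $N_k(G;\cdot)$ does not yield an upper bound by convolution alone. The paper gets around this by truncating the discriminant at a parameter $Y$ (Definition~\ref{def_Y}), proving asymptotics for $N_{k,Y}$ for each fixed $Y$ (Proposition~\ref{bound_Y}), and then controlling the tail $N_k-N_{k,Y}$ as $Y\to\infty$ using Lemma~\ref{prod res bound}. (Also, only the $S_n$ factor attains the minimal index, so $b(k,S_n\times G)=b(k,S_n)=1$; the two factors' log-powers do not ``add.'')

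The second gap is that you relegate the uniformity estimates to a secondary role and only mention the $S_n$ side. In fact the tail estimate requires a uniform-in-$\fq$ bound on the number of $G$-extensions with $\fq\mid\rdisc$, namely Proposition~\ref{loc-uni-est}, which is a new ingredient proved here via Koymans--Pagano's parametrization of nilpotent extensions; it does not follow from Kl\"uners--Malle's weak asymptotic. This is the heart of the argument, not a ``secondary technical point.'' Finally, your inclusion--exclusion to enforce disjointness is unnecessary: the hypotheses force $\gcd(|G|,n!)=1$, so $\widetilde{K_1}\cap K_2=k$ automatically. The hypotheses on $G$ earn their keep elsewhere, in the index inequalities of Proposition~\ref{ind(sigma, g) propn} that feed Lemma~\ref{main Lemma} and make $\sum_{|\fq|>Y}|\fq|^{\delta+\epsilon}$ converge.
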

The strong form of Malle's conjecture has been shown to be false, and Kl\"uners provided an explicit counterexample, cf. \cite{kluners2005counter}. However, the weak version of the conjecture is still widely believed to be true. The strong form is however known for various families of groups, some of the well-known cases are listed below.
\begin{itemize}
    \item The conjecture has been proven for the abelian groups by Maki \cite{maki1985density} and Wright \cite{wright1989distribution}.
    \item For the groups $S_n$ for $n\leq 5$, the conjecture was settled by Davenport--Heilbronn \cite{davenport1971density} for $n=3$, for $n=4,5$ over $\Q$ by Bhargava \cite{bhargava2005density, bhargava2010density} and for general number fields $k$ by Bhargava--Shankar--Wang \cite{bhargava2015geometryofnumbers}.
    \item For the dihedral group $D_4\subset S_4$ the conjecture was resolved by by Cohen, Diaz Y. Diaz, and Olivier \cite{cohen2002enumerating}.
    \item Let $G$ be a finite nilpotent group, $p$ be the smallest prime divisor of $|G|$. Assume that all elements of order $p$ are central. Consider $G$ sitting in $S_{|G|}$ via the regular representation. The strong form of Malle's conjecture in this context was recently resolved by Koymans and Pagano \cite{koymanspagano2023}. The weak version of the conjecture for all finite nilpotent groups $G$ sitting in $S_{|G|}$ via the regular representation, was previously settled by Kl\"uners and Malle \cite{klunersmalle}.
    \item Wang \cite{Wang2020} proved the strong form of the conjecture for $S_n\times A$, where $n\in \{3, 4,5\}$ and $A$ is a finite abelian group satisfying some mild conditions. Here, $A$ is viewed as a subgroup of $S_{|A|}$ via the regular representation, and $S_n\times A$ embeds naturally in $S_{n|A|}$ via the inclusions 
    \[S_n\times A\hookrightarrow S_n\times S_{|A|}\hookrightarrow S_{n|A|}. \] The conditions on $A$ were subsequently removed due to work of Masri, Thorne, Wei-Lun and Wang \cite{masri2020malle}.
\end{itemize}

\subsection{Main result}
Let $G$ be a finite nilpotent group and $\ell_G$ the smallest prime factor of $|G|$. We view $G$ as a subgroup of $S_{|G|}$ via the regular representation. On the other hand, $S_n$ is in its natural degree $n$ representation (thus contained in $S_n$ via the identity map). We recall that $a(G)= \frac{\ell_G}{(\ell_G-1)|G|}$, $a(S_n)= 1$, and $b(k, S_n)= 1$ for all $n \geq 2$. It is easy to see that $a(S_n\times G)=\frac{1}{|G|}$. Since ${a(S_n)}{n} > {a(G)|G|}$ we deduce that $b(k,S_n\times G)= b(k,S_n)$, cf.  \cite[p.91 l.-3 -- p. 92, l.2]{Wang2020}. We now state our main result. 
\begin{theorem}\label{main theorem}
Let $G$ be a non-trivial finite nilpotent group and $k$ a number field. Consider the regular representation $\op{reg}_G: G\hookrightarrow S_{|G|}$. Here, $S_n\times G$ embeds in $S_{n|G|}$ via natural inclusions 
    \[S_n\times G\xrightarrow{\op{id}\times \op{reg}_G} S_n\times S_{|G|}\hookrightarrow S_{n|G|}. \]Then there exists a constant $c(k,S_n\times G)>0$ such that
\[
N_{k}(S_n\times G; X) \sim c(k,S_n\times G)X^{\frac{1}{|G|}}
\]
in the following cases:
\begin{enumerate}
    \item $n=3$, if $2 \nmid |G|$,
    \item $n=4$, if $2,3 \nmid |G|$,
    \item $n=5$, if $2,3,5 \nmid |G|$.
\end{enumerate}
Thus, Conjecture \ref{malle's} is true in this setting. 
\end{theorem}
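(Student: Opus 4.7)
The plan is to parametrize $S_n\times G$-extensions $K/k$, in the specified product permutation embedding into $S_{n|G|}$, by pairs $(F,L)$, where $F/k$ is a degree-$n$ extension with Galois closure $\widetilde F/k$ satisfying $\op{Gal}(\widetilde F/k)\cong S_n$, and $L/k$ is a $G$-Galois extension of degree $|G|$. If $\widetilde K/k$ denotes the Galois closure of $K/k$, then $F$ is the fixed field of the subgroup $S_{n-1}\times G\subset S_n\times G$, the extension $L$ is the fixed field of $S_n\times\{1\}$, and $K=FL$. For $n\in\{3,4,5\}$ the groups $S_n$ have only $\Z/2$ (and, when $n=4$, the non-abelian $S_3$) as nontrivial quotients, while a nilpotent group $G$ with $2\nmid|G|$ has no $\Z/2$ quotient and no nilpotent group has an $S_3$ quotient. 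Hence in all three cases $\widetilde F\cap L=k$ automatically, and the correspondence $K\leftrightarrow (F,L)$ is an unconditional bijection onto $S_n\times G$-extensions of $k$ in the prescribed embedding.

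The second step is a discriminant formula. Linear disjointness of $\widetilde F$ and $L$ combined with the tower $K\supset L\supset k$ yields $\op{disc}(K/k)=\op{disc}(F/k)^{|G|}\cdot\op{disc}(L/k)^n$ as ideals of $\cO_k$ when $\op{disc}(F/k)$ and $\op{disc}(L/k)$ are coprime; in general the identity holds up to a bounded multiplicative correction supported at primes of common ramification. The full strength of the hypotheses (that $|G|$ is coprime to the relevant small primes) ensures that at any prime where $L$ ramifies, the residue characteristic is coprime to $|S_n|$, so $F$ is at worst tamely ramified there and the local correction factor depends only on elementary local ramification data. Taking absolute norms gives
\[
\op{Disc}(K)=\op{Disc}(F)^{|G|}\op{Disc}(L)^n\cdot\delta(F,L),
\]
with $\delta(F,L)$ a bounded multiplicative function that assembles into convergent Euler products.

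The counting step then reads
\[
N_k(S_n\times G;X)=\sum_L \#\bigl\{F:\op{Gal}(\widetilde F/k)\cong S_n,\ \op{Disc}(F)^{|G|}\op{Disc}(L)^n\delta(F,L)\leq X\bigr\}.
\]
For each $L$, the inner count---over degree-$n$ $S_n$-extensions with prescribed tame local conditions at the ramified primes of $L$---is asymptotic to $c_n(k)\alpha(L)\cdot X^{1/|G|}/\op{Disc}(L)^{n/|G|}$, by a version of the Bhargava--Shankar--Wang theorem \cite{bhargava2015geometryofnumbers} uniform in such local conditions, where $\alpha(L)$ is a bounded local density. Summing over $L$ yields
\[
N_k(S_n\times G;X)\sim c_n(k)X^{1/|G|}\sum_L \alpha(L)\op{Disc}(L)^{-n/|G|}.
\]
By the Koymans--Pagano nilpotent Malle bound \cite{koymanspagano2023}, $\#\{L:\op{Disc}(L)\leq Y\}\ll Y^{\ell_G/((\ell_G-1)|G|)}(\log Y)^{O(1)}$, and partial summation shows the $L$-sum converges provided $n/|G|>\ell_G/((\ell_G-1)|G|)$, equivalently $n(\ell_G-1)>\ell_G$, which is immediate for every $n\geq 3$ and $\ell_G\geq 2$.

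The main obstacle is rigorously justifying the interchange of the $L$-sum with the $F$-asymptotic. Following the strategy of \cite{Wang2020, masri2020malle}, one truncates the $L$-sum at $\op{Disc}(L)\leq X^\eta$ for a small $\eta>0$: on this range, uniformity of Bhargava--Shankar--Wang in tame local conditions (automatic from our coprimality assumptions on $|G|$) supplies the main term. The tail $\op{Disc}(L)>X^\eta$ is controlled by combining the Koymans--Pagano upper bound with Schmidt-type upper bounds for the number of $S_n$-extensions of large discriminant, producing a contribution of $O(X^{1/|G|-\delta})$ for some $\delta>0$. Assembling these pieces yields the claimed asymptotic with explicit positive constant $c(k,S_n\times G)=c_n(k)\sum_L\alpha(L)\op{Disc}(L)^{-n/|G|}$.
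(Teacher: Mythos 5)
Your high-level architecture agrees with the paper's: parametrize $S_n\times G$-extensions by pairs $(F,L)$, exploit an approximate multiplicativity of discriminants, and establish the asymptotic by summing over $L$. However, the proposal glides over the two technical pillars that actually carry the proof, and as written the tail argument would fail.

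The more serious gap is your treatment of pairs $(F,L)$ that share ramification. You write $\op{Disc}(K)=\op{Disc}(F)^{|G|}\op{Disc}(L)^n\cdot\delta(F,L)$ with $\delta(F,L)$ a ``bounded multiplicative function that assembles into convergent Euler products.'' In fact $\delta(F,L)\le 1$, and at a common tamely ramified prime $\fp$ with inertia generators $\sigma\in S_n$ and $g\in G$ the local factor is $|\fp|^{-\ind(\sigma)\ind(g)}$, which is not bounded below: $\delta(F,L)$ can be as small as $X^{-c}$ for a positive constant $c$ when $F$ and $L$ ramify at many common large primes. Consequently, pairs with $\op{Disc}(L)>X^\eta$ and $\op{Disc}(F)$ also large may still satisfy $\op{Disc}(K)\leq X$, and your attempted tail bound via ``Koymans--Pagano plus Schmidt-type upper bounds'' does not see this. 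Bounding the contribution of such pairs is precisely where the paper's index inequalities (Proposition \ref{ind(sigma, g) propn}) and the local uniformity estimates (Theorem \ref{est_sn} for $S_n$ and, new in this paper, Proposition \ref{loc-uni-est} for nilpotent $G$) are combined, via Lemma \ref{main Lemma}, into a power saving of the form $\prod_\sigma|\fq_\sigma|^{\delta+\epsilon}$ with $\delta<-1$. Without that mechanism the difference $N_k(S_n\times G;X)-N_{k,Y}(S_n\times G;X)$ cannot be controlled.

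The second gap is the claim that ``uniformity of Bhargava--Shankar--Wang in tame local conditions'' is ``automatic from our coprimality assumptions on $|G|$.'' It is not. The relevant uniformity is a quantitative power saving in $|\fq|$ for the count of $S_n$-extensions prescribed to be overramified or totally ramified at all primes dividing a squarefree ideal $\fq$; for $n=5$ this is a difficult theorem of Wang, and even for $n=3,4$ it is a separate ingredient (Davenport--Heilbronn/Wang). Coprimality of $|G|$ with the small primes only ensures the common ramification is tame; it does nothing to furnish the uniform savings in the number of such primes that the argument requires. You should also note that the analogous uniformity estimate for $G$-extensions with a prescribed divisor of the conductor (Proposition \ref{loc-uni-est}, built from the Koymans--Pagano parametrization) is essential and is one of the paper's new contributions; you cite only the size bound on $N_k(G;Y)$, which is not sufficient.

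A minor point: your claim of a power saving $O(X^{1/|G|-\delta})$ for the tail is stronger than what the method produces; the paper obtains $o(X^{1/|G|})$ after letting $Y\to\infty$, which suffices.
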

This extends the main result of \cite{Wang2020}. The result is proven via a synthesis of the methods of Kl\"uners--Malle \cite{klunersmalle}, Wang \cite{Wang2020} and Koymans--Pagano \cite{koymanspagano2023}. The conditions on $G$ are consistent with those in \cite{Wang2020}.

\subsection{Outline of the proof} We closely follow the method in \cite{Wang2020}. Two complications have to be addressed. First, the method requires strong uniformity estimates for nilpotent groups. We address this in Section \ref{s 3}. It is worth noting that although a weaker uniformity estimate suffices for proving the main theorem, we pursue a sharp bound to attain the best possible uniformity estimate (see Proposition \ref{loc-uni-est}). This result stands as a potentially independent point of interest. Proposition \ref{bound_Y} gives us a crucial ingredient for proving precise asymptotics for $S_n \times G$-extensions. This technical result is the analogue of \cite[p. 117 (5.6)]{Wang2020}, however, it is proven via a different strategy. For more details, see Section \ref{s 4}.

\subsection{Organization}
\par Including the Introduction the paper consists of four sections. In section \ref{s 2}, several preliminary results are proven on the discriminants of the compositum of two number fields. Furthermore, some analytic tools are introduced here. In section \ref{s 3}, using the parameterization for finite nilpotent extensions of $k$ with fixed Galois group $G$ given by Koymans and Pagano \cite{koymanspagano2023} we prove a local uniformity estimate for nilpotent groups (cf. Proposition \ref{loc-uni-est}). The proof of the main theorem is given in section \ref{s 4}.

\subsection{Outlook} It is possible that the inductive methods used in this article could eventually lead to the proof of the strong form of Malle's conjecture in other similar cases. We have been apprised of ongoing efforts by Alberts, Lemke-Oliver, Wang, and Wood, who are expected to develop more generalized methods similar to Wang and ours to establish the strong form of Malle's conjecture for various groups.

\subsection*{Acknowledgments} We would like to thank Peter Koymans and Jiuya Wang for their insightful comments and suggestions. We also extend our sincerest thanks to the anonymous referee for the excellent and timely report.

\section{Preliminary results}\label{s 2}
\par We begin this section by recalling some notation. Throughout, $k$ will be a fixed number field. Set $\Omega_k$ to denote the set of all primes of $k$. For any non-zero ideal $\mathfrak{I}\subset \cO_k$, set $|\mathfrak{I}|:=|\op{Norm}_{k/\Q}(\mathfrak{I})|$. Given any finite extensions $K/k$ we denote by $\rdisc(K/k)$ the relative discriminant ideal in $k$ and set $\disc(K):=|\rdisc(K/k)|$. We use $\fp$ to denote a finite prime in $k$. Take $\rdisc_{\fp}(K/k)$ to be the ideal $\fp^{\op{ord}_{\fp}(\rdisc(K/k))}$ and $\disc_{\fp}(K)$ its absolute norm. We denote the inertia group at a prime $\fp$ by $I_{\fp}(K)$. Let $G$ be a finite nilpotent group and $n\in \{3,4,5\}$ be an integer. Note that $G$ sits inside $S_{|G|}$ via the regular representation. Therefore, we view the product $S_n \times G$ as a subgroup of $S_n\times S_{|G|}\subset S_{n|G|}$. 
\par We prove a number of preliminary results for the product of $S_n\times G$. We note that the case where $G$ is an abelian group is treated in \cite{Wang2020}. Throughout, it shall be assumed that $G$ satisfies the conditions of Theorem \ref{main theorem}. 
\begin{proposition}\label{disc_bound}
    Let $K/k$ and $L/k$ be number field extensions with $n:=[K:k]$ and $m:=[L:k]$. Assume that $[KL : k] = [K : k][L : k]$. Then we have that
\[
\disc(KL) \leq \disc(K)^m \disc(L)^n.
\]
\end{proposition}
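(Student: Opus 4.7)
The plan is to use the conductor-discriminant tower formula for the chain $k \subset K \subset KL$ together with an elementary comparison of discriminant ideals induced by base change from $k$ to $K$.

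First, I would record the relative discriminant tower formula: since $[KL:k]=nm=[K:k][L:k]$ by hypothesis, we have $[KL:K]=m$, and the standard tower identity gives
\[
\rdisc(KL/k) \;=\; \op{Norm}_{K/k}\!\bigl(\rdisc(KL/K)\bigr)\cdot \rdisc(K/k)^{m}.
\]
Next, I would compare $\rdisc(KL/K)$ with $\rdisc(L/k)\cO_K$. Fix any $\cO_k$-basis (or, at a given prime $\fp$, an $\cO_{k,\fp}$-basis) $\omega_1,\dots,\omega_m$ of $\cO_L$. The linear disjointness assumption $[KL:k]=nm$ implies that $\omega_1,\dots,\omega_m$ remains a $K$-basis of $KL$ consisting of algebraic integers, and therefore generates an $\cO_K$-submodule $M\subset\cO_{KL}$ of full rank $m$. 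Because the trace form is preserved under base change from $k$ to $K$, we have $\op{disc}_{\cO_K}(M)=\rdisc(L/k)\cO_K$. Since $M\subset\cO_{KL}$, the definition of the relative discriminant yields the divisibility
\[
\rdisc(KL/K)\;\Big|\;\rdisc(L/k)\cO_K.
\]

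Substituting this into the tower formula and taking the ideal norm from $K$ to $k$ (using $[K:k]=n$) gives
\[
\rdisc(KL/k)\;\Big|\;\rdisc(L/k)^{n}\cdot \rdisc(K/k)^{m}
\]
as ideals in $\cO_k$. Applying the absolute norm $|\op{Norm}_{k/\Q}(\cdot)|$, which is multiplicative and monotone under divisibility, yields $\disc(KL)\leq \disc(K)^{m}\disc(L)^{n}$ as claimed.

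The only subtlety I anticipate is the passage $M\subset\cO_{KL}$ together with the correct identification of the discriminant of $M$ over $\cO_K$. If $\cO_L$ is not free over $\cO_k$, one should argue locally at each prime $\fp$ of $k$ (where $\cO_{L,\fp}$ is free over $\cO_{k,\fp}$) and then combine the resulting local divisibilities for $\rdisc_\fp(KL/K)$; the linear disjointness ensures the local degrees behave correctly. This is the only step requiring care, but it is entirely routine once phrased locally.
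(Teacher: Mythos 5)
Your argument is correct and is essentially the standard proof of this fact: the paper itself only cites \cite[Theorem 2.1]{Wang2020}, and Wang's proof proceeds exactly via the tower formula $\rdisc(KL/k)=\op{Norm}_{K/k}(\rdisc(KL/K))\cdot\rdisc(K/k)^{m}$ together with the observation that a local $\cO_k$-basis of $\cO_L$ generates a full-rank $\cO_K$-submodule of $\cO_{KL}$ whose discriminant ideal is $\rdisc(L/k)\cO_K$, giving $\rdisc(KL/K)\mid\rdisc(L/k)\cO_K$. Your treatment of the base-change invariance of the trace form (using linear disjointness from $[KL:k]=nm$) and the local reduction when $\cO_L$ is not free over $\cO_k$ are exactly the points one must handle, and you handle them correctly.
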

\begin{proof}
    The stated result is \cite[Theorem 2.1]{Wang2020}.
\end{proof}
 If $K/k$ and $L/k$ are tamely ramified at $\fp$, then the inertia groups at $\fp$ are cyclic. Let $n:=[K:k]$ and $m:=[L:k]$. Let $g_K \in S_n$ and $g_L \in S_m$ respectively denote generators for these groups. Then, $\rdisc_{\fp}(K)=p^{\ind(g_K)}$ and $\rdisc_{\fp}(L)=p^{\ind(g_L)}$, where $\ind(g)$ denotes the index of $g$ (cf. \eqref{index definition}). The next result is used to determine $\rdisc_{\fp}(KL)$ in terms of $\rdisc_{\fp}(K)$ and $\rdisc_{\fp}(L)$ at the tamely ramified places. Denote by $\widetilde{K}$ (resp. $\widetilde{L}$) the Galois closure of $K$ (resp. $L$). 

\begin{proposition}\label{tr places}
With respect to notation above, assume that $\widetilde{K}\cap \widetilde{L}=k$ and that $p$ is tamely ramified in $\widetilde{K}$ and $\widetilde{L}$. Then the following assertions hold.
\begin{enumerate}
    \item Let $e_K$ and $e_L$ be the ramification indices of $\widetilde{K}$ and $\widetilde{L}$ at $p$ with $(e_K, e_L) = 1$. Then denote a generator of an inertia group of $KL$ at $p$ by $g_{KL}$, we have
\[
\ind(g_{KL}) = \ind(g_{K})\cdot m + \ind(g_L)\cdot n - \ind(g_K) \cdot \ind(g_L).
\]
\item Let the
generator of an inertia group of $K$ at $p$ be $g_K =\prod_{i}c_i$, and the generator of an inertia group of $L$ at $p$ be $g_L =\prod_{j} d_j$. Then the generator $g_{KL}$ of an inertia group of $KL$ at $p$ satisfies
\[
\ind(g_{KL}) = mn - \sum_{i,j}\mathrm{gcd}(|c_i|,|d_j|).
\]
Here $n = [K : k]$ and $m = [L : k]$.
\end{enumerate}
\end{proposition}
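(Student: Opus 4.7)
The plan is to exploit the linear disjointness given by $\widetilde{K}\cap\widetilde{L}=k$ in order to reduce the computation of $\ind(g_{KL})$ to a combinatorial orbit-count for a product of two cyclic permutations, and then to derive (1) from the more general assertion (2) using the coprimality hypothesis on $e_K$ and $e_L$. Since (2) is the more general statement, I would prove it first and view (1) as a corollary.

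First I would observe that $\widetilde{K}\cap\widetilde{L}=k$ yields $\op{Gal}(\widetilde{KL}/k)\cong \op{Gal}(\widetilde{K}/k)\times\op{Gal}(\widetilde{L}/k)$ and $[KL:k]=nm$. The $nm$ embeddings of $KL$ into $\bar{k}$ extending the identity on $k$ are then in bijection with pairs $(\iota,\jmath)$ of embeddings of $K$ and $L$, and under the permutation representation $\op{Gal}(\widetilde{KL}/k)\hookrightarrow S_{nm}$ an element $(\sigma,\tau)$ acts on such a pair coordinatewise. Now I would fix a prime of $\widetilde{KL}$ above $p$ and let $I$ denote its inertia subgroup. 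Since $p$ is tame in both $\widetilde{K}$ and $\widetilde{L}$, the extension $\widetilde{KL}/k$ is tame at $p$, so $I$ is cyclic. The projection maps $\op{Gal}(\widetilde{KL}/k)\to\op{Gal}(\widetilde{K}/k)$ and $\op{Gal}(\widetilde{KL}/k)\to\op{Gal}(\widetilde{L}/k)$ send $I$ onto the inertia subgroups of $\widetilde{K}$ and $\widetilde{L}$ at $p$, so a generator of $I$ has the form $(\tau_K,\tau_L)$ with each component generating the corresponding inertia; in the permutation representation this is $g_{KL}=(g_K,g_L)$.

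The combinatorial heart is the standard observation that if $g_K=\prod_i c_i$ and $g_L=\prod_j d_j$ are the cycle decompositions, then the diagonal action of $\langle(g_K,g_L)\rangle$ on a single block $c_i\times d_j$ has exactly $\gcd(|c_i|,|d_j|)$ orbits, each of length $\op{lcm}(|c_i|,|d_j|)$ (any orbit representative $(x,y)$ has stabilizer generated by the smallest $N$ with $g_K^N x=x$ and $g_L^N y=y$). Summing over $i,j$ gives
\[
\ind(g_{KL}) \;=\; nm-\sum_{i,j}\gcd(|c_i|,|d_j|),
\]
which is (2). For (1), each cycle length $|c_i|$ divides $e_K$ and each $|d_j|$ divides $e_L$, so $\gcd(e_K,e_L)=1$ forces every $\gcd(|c_i|,|d_j|)=1$. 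Writing $r$ and $s$ for the number of cycles of $g_K$ and $g_L$, the displayed formula collapses to $nm-rs$; substituting $r=n-\ind(g_K)$ and $s=m-\ind(g_L)$ and expanding recovers $\ind(g_K)\cdot m+\ind(g_L)\cdot n-\ind(g_K)\cdot\ind(g_L)$.

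The hard part is not really hard: the only delicate point is the bookkeeping to confirm that the cyclic inertia of $\widetilde{KL}$ really is generated by an element whose two projections separately generate the inertia subgroups of $\widetilde{K}$ and $\widetilde{L}$. This is precisely where tameness is used, since a wildly ramified inertia need not be cyclic and the cycle structure of $g_{KL}$ need not decouple through the two factors in the clean way above.
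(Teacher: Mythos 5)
Your proof is correct and is essentially the standard argument. The paper itself does not prove Proposition \ref{tr places} but defers to \cite[Theorems~2.2,~2.3]{Wang2020}; it does, however, prove the closely related Lemma \ref{index_comp}, and your combinatorial core (diagonal orbit count on $c_i\times d_j$ giving $\gcd(|c_i|,|d_j|)$ orbits of equal length $\operatorname{lcm}(|c_i|,|d_j|)$, then sum over blocks) is exactly the non-coprime generalization of that Lemma, with (1) recovered as the special case $\gcd(|c_i|,|d_j|)=1$ throughout. The Galois-theoretic reduction — identifying $\op{Gal}(\widetilde{KL}/k)$ with the direct product, noting that cyclic inertia of $\widetilde{KL}$ at $p$ surjects onto each factor's inertia so that a generator has the form $(g_K,g_L)$ with each coordinate a generator, and observing that this is the only place tameness is invoked — is the same reduction used in Wang's original proof, so there is nothing genuinely different to compare.
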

\begin{proof}
   The result above is \cite[Theorem 2.2, 2.3]{Wang2020}.
\end{proof}

\begin{Lemma}\label{disc_inv}
    Given $K/k$ and $L/k$ with $\widetilde{K} \cap \widetilde{L} = k$. The local étale algebra of the compositum $(KL)_{\fp}$ at a prime $p$ could be determined by the local étale algebras $(K)_{\fp}$ and $(L)_{\fp}$. In particular, the relative discriminant ideal $\rdisc_{\fp}(KL)$ as an invariant of $(KL)_{\fp}$ could be determined by $(K)_{\fp}$ and $(L)_{\fp}$.
\end{Lemma}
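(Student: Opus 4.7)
My plan is to identify the local étale algebra of the compositum as the tensor product of the local étale algebras of $K$ and $L$ over $k_\fp$, and then to invoke the fact that the local relative discriminant is an invariant of an étale algebra.

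First I would observe that the hypothesis $\widetilde{K}\cap\widetilde{L}=k$ makes the Galois extensions $\widetilde{K}/k$ and $\widetilde{L}/k$ linearly disjoint, and in particular $K$ and $L$ are linearly disjoint over $k$, since if $E,F$ are linearly disjoint over $k$ then so are any subfields $E'\subseteq E$, $F'\subseteq F$ (the map $E'\otimes_k F'\to E\otimes_k F$ being injective). Consequently the natural multiplication map $K\otimes_k L\to KL$ is an isomorphism of $k$-algebras.

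Next I would tensor this identification with the completion $k_\fp$ to obtain a chain of canonical $k_\fp$-algebra isomorphisms
\[
(KL)_{\fp}\;=\;KL\otimes_k k_\fp\;\cong\;(K\otimes_k L)\otimes_k k_\fp\;\cong\;(K\otimes_k k_\fp)\otimes_{k_\fp}(L\otimes_k k_\fp)\;=\;K_\fp\otimes_{k_\fp}L_\fp.
\]
The right-hand side is manifestly a functor of the two étale $k_\fp$-algebras $K_\fp$ and $L_\fp$, so the isomorphism class of $(KL)_\fp$ is determined by those of $K_\fp$ and $L_\fp$. This proves the first assertion.

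For the second assertion, I would recall that the local relative discriminant $\rdisc_\fp(KL)$ is computed from the trace pairing on the étale $k_\fp$-algebra $(KL)_\fp$ (equivalently, it is the product of the local discriminants of the $(KL)_{\fq}/k_\fp$ over all $\fq\mid\fp$), hence it is an isomorphism invariant of $(KL)_\fp$. Combined with the previous paragraph, this shows $\rdisc_\fp(KL)$ is determined by $K_\fp$ and $L_\fp$, completing the proof. The argument is essentially formal; the only subtlety to watch is verifying linear disjointness at the level of the fields themselves (rather than just their Galois closures), which is handled by the injectivity observation above.
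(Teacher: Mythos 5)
Your proof is correct, and it supplies the actual argument that the paper only references: the paper's ``proof'' of this lemma is a bare citation to \cite[Theorem 2.4]{Wang2020}, so the substance of the argument lives there. Your route is the standard one and (to my knowledge) the same as Wang's: use $\widetilde{K}\cap\widetilde{L}=k$ to get linear disjointness, identify $KL\cong K\otimes_k L$, base-change to $k_\fp$ to get $(KL)_\fp\cong K_\fp\otimes_{k_\fp}L_\fp$, and then note the local discriminant is an isomorphism invariant of the étale algebra via the trace form. One small point worth making explicit: linear disjointness of $\widetilde{K}$ and $\widetilde{L}$ does not follow from intersection $k$ for arbitrary extensions — it is the fact that both are Galois over $k$ that lets you pass from $\widetilde{K}\cap\widetilde{L}=k$ to linear disjointness, after which restriction to the subfields $K\subset\widetilde{K}$, $L\subset\widetilde{L}$ is the formal observation you make. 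Your injectivity argument for $K\otimes_k L\to KL$ together with the dimension count $[KL:k]\le [K:k][L:k]$ then forces it to be an isomorphism. The proof is complete.
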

\begin{proof}
    The result is \cite[Theorem 2.4]{Wang2020}.
\end{proof}

It is convenient to now present a group-theoretic version of Proposition \ref{tr places}. This version will be later employed in index computation within the product of appropriate symmetric and nilpotent groups.
\begin{Lemma}\label{index_comp}
    Let $\sigma \in S_n$ and $\tau \in S_m$ with disjoint cycle decomposition $\sigma = \prod_i c_j$ and $\tau = \prod_j d_j$. Suppose that $(|c_i|,|d_j|)= 1$ for all $i,j$ and consider $(\sigma, \tau) \in S_{n}\times S_m \subset S_{nm}$ then
    \[
\ind(\sigma, \tau) = \ind(\sigma)\cdot m+ \ind(\tau)\cdot n- \ind(\sigma) \cdot \ind(\tau).
\]
\end{Lemma}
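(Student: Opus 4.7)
The plan is to compute $\ind(\sigma,\tau)$ directly by counting the orbits of $(\sigma,\tau)$ on the product set $\{1,\dots,n\}\times\{1,\dots,m\}$, which is the set on which $S_n\times S_m\subset S_{nm}$ naturally acts, and then match the count against the right-hand side by a short algebraic identity.

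More precisely, I would first expand the disjoint cycle decompositions so that every point is accounted for (i.e.\ include fixed points). Write $\sigma=\prod_{i=1}^a c_i$ with $|c_i|=\ell_i$ and $\sum_i\ell_i=n$, and $\tau=\prod_{j=1}^b d_j$ with $|d_j|=\ell'_j$ and $\sum_j\ell'_j=m$. By the definition in \eqref{index definition} we then have $\ind(\sigma)=n-a$ and $\ind(\tau)=m-b$. The key geometric observation is that the orbits of $(\sigma,\tau)$ on $\{1,\dots,n\}\times\{1,\dots,m\}$ respect the partition into blocks $C_i\times D_j$, where $C_i$ is the support of $c_i$ and $D_j$ is the support of $d_j$, and on each such block the number of $(c_i,d_j)$-orbits equals $|C_i|\cdot|D_j|/\op{lcm}(\ell_i,\ell'_j)=\gcd(\ell_i,\ell'_j)$.

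The coprimality hypothesis $(|c_i|,|d_j|)=1$ now enters in a single crucial way: it forces $\gcd(\ell_i,\ell'_j)=1$ for every pair $(i,j)$, so each block $C_i\times D_j$ carries exactly one orbit of $(\sigma,\tau)$. Summing over $i,j$ gives a total of $ab$ orbits, and hence $\ind(\sigma,\tau)=nm-ab$.

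Finally I would close the argument by the routine identity
\[
nm-ab=(n-a)m+(m-b)n-(n-a)(m-b),
\]
which upon substituting $\ind(\sigma)=n-a$ and $\ind(\tau)=m-b$ yields exactly the formula in the statement. I do not anticipate a genuine obstacle: the only nontrivial input is the orbit-length formula $\op{lcm}(\ell_i,\ell'_j)$ on $C_i\times D_j$, which is standard, and the coprimality hypothesis is what makes this $\op{lcm}$ as large as possible so that the orbit count collapses to a single orbit per block.
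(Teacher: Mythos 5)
Your proof is correct and takes essentially the same approach as the paper's: both arguments reduce to the observation that, under the coprimality hypothesis, $(c_i,d_j)$ acts as a single cycle on the block $C_i\times D_j$ (which you phrase via the orbit-count $\gcd(\ell_i,\ell'_j)=1$ and the paper phrases by saying $(c_i,d_j)$ is a cycle of length $|c_i||d_j|$), so the total number of orbits of $(\sigma,\tau)$ is the product $ab=(n-\ind(\sigma))(m-\ind(\tau))$, and the stated formula then follows from the identity $nm-ab=(n-a)m+(m-b)n-(n-a)(m-b)$.
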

\begin{proof}
    The result is essentially the same as \cite[Theorem 2.2]{Wang2020}. Since the proof is relatively short, we include it for completeness. Since $(|c_i|,|d_j|) = 1$ for all $i,j$ the element $(c_i,d_j) \in S_{nm}$ is a cycle of length $|c_i||d_j|$ for each $i,j$. These cycles in $S_{nm}$ are disjoint from each other and $(\sigma, \tau) = \prod_{(i,j)}(c_i,d_j)$. Hence, the number of cycles in the decomposition of $(\sigma,\tau)$ is the product of the number of cycles in the decomposition of $\sigma$ and $\tau$. We recall that the number of such cycles is $n-\ind(\sigma)$ for $\sigma \in S_n$ and $m-\ind(\tau)$ for $\tau \in S_m$. Therefore,
    \[
    nm-\ind(\sigma,\tau) = (n-\ind(\sigma))(m-\ind(\tau))
    \]
    and the Lemma follows.
\end{proof}
Let $\ell_1, \dots, \ell_c$ be the primes that divide $|G|$ and denote by $G(\ell_i)$ the $\ell_i$-Sylow subgroup of $G$. Since $G$ is a nilpotent group, we have the product decomposition $G:= \prod_{i=1}^cG(\ell_i)$. Denote by $\ell_G$ the smallest prime factor of $|G|$. Given $\sigma \in S_n$ and $g \in G$ we compute the index of $(\sigma, g) \in S_{n|G|}$ using the Lemma \ref{index_comp}. The method employed by Wang for abelian groups, as detailed in \cite[Lemmas 2.5, 2.6 and 2.7]{Wang2020} generalizes to nilpotent groups. For the sake of completeness, we provide details. In what follows, we set $G^\ast$ to denote $G\backslash \{1_G\}$ and $\op{ind}(G):=\op{min}\{\op{ind}(g)\mid g\in G^\ast\}$. 
\begin{proposition}\label{ind(sigma, g) propn}
    Let $G$ be a finite nilpotent group in its regular representation $G\hookrightarrow S_{|G|}$.
    \begin{enumerate}
        \item\label{c1 ind(sigma, g) propn} When $n=3$, suppose that $2 \nmid |G|$. Then we have that \begin{equation}\label{ind(sigma, g) propn f1}
        \ind((12),g)/|G| > 2 \text{ and } \ind((123),g)/|G| > 1
        \end{equation}
        for all $g \in G$.
        \item\label{c2 ind(sigma, g) propn} For $n=4$ assume that $2,3 \nmid |G|$. Then \begin{equation}\label{ind(sigma, g) propn f2}
        \ind((12),g)/|G| > 2\text{ and } \ind((12)(34),g)/|G| > 1, 
        \end{equation}
        \begin{equation}\label{ind(sigma, g) propn f3}
        \ind((123),g)/|G| > 3\text{ and } \ind((1234),g)/|G| > 2
        \end{equation}
        for all $g \in G$.
        \item\label{c3 ind(sigma, g) propn} When $n=5$ assume that $2,3,5 \nmid |G|$. Then we have that \begin{equation}\label{ind(sigma, g) propn f4}
        \ind(\sigma,g)/|G| \geq 1 + \ind(\sigma)-1/7\end{equation} for all $\sigma \in S_5$ and $g \in G$. Further if $\sigma$ is not conjugate to $(12345)$ in $S_5$ then \begin{equation}\label{ind(sigma, g) propn f5}\ind(\sigma,g)/|G| \geq 12/7 + \ind(\sigma).\end{equation}
    \end{enumerate}
\end{proposition}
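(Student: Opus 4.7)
The strategy is to apply Lemma \ref{index_comp} directly, using the explicit cycle structure of $g$ in the regular representation $G \hookrightarrow S_{|G|}$. The key observation is that a non-identity element $g \in G$ of order $d$ acts on $G$ by left translation as a product of $|G|/d$ disjoint cycles, each of length $d$; in particular all cycles of $g$ have common length $d$ and $\ind(g) = |G|(d-1)/d$. The coprimality hypothesis on $|G|$ in parts (1)--(3) is arranged so that every admissible order $d > 1$ is coprime to the cycle lengths of the $\sigma$'s appearing: cycle lengths of elements of $S_n$ for $n \in \{3,4,5\}$ have prime divisors only in $\{2,3,5\}$, and exactly the relevant primes are excluded from $|G|$ in each part.

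Granted the coprimality, Lemma \ref{index_comp} combined with the identity $\ind(g) = |G|(d-1)/d$ yields the clean formula
\[
\frac{\ind(\sigma, g)}{|G|} \;=\; \frac{\ind(\sigma) + n(d-1)}{d}, \qquad d = |g|.
\]
Each asserted inequality then reduces to a one-variable minimisation over the admissible orders $d$ of $g \in G^\ast$, which are divisors of $|G|$ bounded below by $\ell_G$, the smallest prime divisor of $|G|$. The right-hand side is monotone in $d$ (after clearing denominators in each comparison), so the tightest subcase is always $d = \ell_G$. In part (1) one has $\ell_G \geq 3$, and for $\sigma = (12)$ the ratio is at least $(3\ell_G - 2)/\ell_G \geq 7/3 > 2$; the case $\sigma = (123)$ with $3 \nmid d$ follows by the same computation. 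In part (3) one has $\ell_G \geq 7$: for $\sigma = (12345)$ and $d = 7$ the ratio equals $(4 + 30)/7 = 34/7 = 5 - 1/7$, matching the bound $1 + \ind(\sigma) - 1/7$ with equality; excluding the class of $(12345)$ forces $\ind(\sigma) \leq 3$, and the same minimisation gives the sharper bound $12/7 + \ind(\sigma)$ also with equality at $d = 7$.

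The one instance in which Lemma \ref{index_comp} cannot be applied directly is $\sigma = (123)$ in part (1) when $3 \mid d$: here only $2 \nmid |G|$ is assumed, so the $3$-cycle of $\sigma$ may share a common factor with a cycle of $g$. In this subcase I would fall back on the general orbit-counting identity
\[
\ind(\sigma,\tau) \;=\; nm \;-\; \sum_{i,j}\gcd(|c_i|,|d_j|),
\]
which underpins Proposition \ref{tr places}(2) and holds without any coprimality requirement; substituting the uniform cycle type of $g$ yields ratio $3(d-1)/d \geq 2 > 1$. The remaining cases in parts (2) and (3) are structurally identical. The main obstacle is thus purely combinatorial bookkeeping: one must enumerate the relevant conjugacy classes of $S_n$ for $n \in \{3,4,5\}$, apply the correct formula (Lemma \ref{index_comp} or the general one), identify the smallest admissible $d$, and verify that each stated inequality holds, frequently with equality in the critical case $d = \ell_G$. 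No tool beyond Lemma \ref{index_comp} and the cycle count in the regular representation is required.
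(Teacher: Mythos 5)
Your argument is correct and proceeds along essentially the same lines as the paper's: both reduce to Lemma \ref{index_comp} together with the identity $\ind(g) = |G|(1-1/|g|)$ in the regular representation. Your parameterization by $d=|g|$ and the resulting closed formula $\ind(\sigma,g)/|G| = \bigl(\ind(\sigma)+n(d-1)\bigr)/d$, which is monotone increasing in $d$ since $\ind(\sigma)<n$, is a tidy way to locate the worst case at $d=\ell_G$ and also exhibits the equality cases in part (3); the paper instead just substitutes $\ind(g)\geq\ind(G)$. The one genuine divergence is the subcase $\sigma=(123)$, $3\mid d$ of part (1), where Lemma \ref{index_comp} is inapplicable. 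You invoke the general orbit-count identity $\ind(\sigma,\tau)=nm-\sum_{i,j}\gcd(|c_i|,|d_j|)$ to get $\ind((123),g)/|G| = 3(d-1)/d\geq 2$ in one line, whereas the paper decomposes $G=G(3)\times\widetilde G$ and applies Lemma \ref{index_comp} twice to $\bigl((123),g(3)\bigr)$ and $\widetilde g$, leading to a short bilinear minimization; your route is simpler. Two caveats. First, the general $\gcd$ formula is not stated in the paper as a group-theoretic lemma --- Proposition \ref{tr places}(2) is phrased for inertia generators --- so you should record the elementary fact that $(c_i,d_j)$ acting on the $|c_i|\times|d_j|$ grid has $\gcd(|c_i|,|d_j|)$ orbits, which is exactly the observation already proving Lemma \ref{index_comp}. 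Second, note that both proofs tacitly restrict to $g\in G^\ast$ (equivalently $d\geq\ell_G$): for $g=1_G$ one has $\ind((12),1_G)/|G|=1$, so the ``for all $g\in G$'' in the statement must be read as $g\in G^\ast$.
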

\begin{proof} Since $g$ sits inside $S_m$ via the regular representation, we have that \begin{equation}\label{ind g equation}\op{ind}(g)=|G|\left(1-\frac{1}{e(g)}\right)\end{equation} where $e(g)$ denotes the order of $g$. Note that since it is assumed that $\ell_G>2$, and hence $e(g)>2$ for all $g\neq 1_G$. It follows that $\op{ind}(g)>\frac{|G|}{2}$ for all $g\in G^\ast$ and \[\op{ind}(G)=|G|\left(1-\frac{1}{\ell_G}\right).\]
\par We begin with \eqref{c1 ind(sigma, g) propn}. For $(12) \in S_3$, using Lemma \ref{index_comp} we deduce that for all $g \in G$ we have that $\ind((12),g) = |G| + 2\cdot \ind(g)$ as $\ind(12) = 1$. We deduce that \[\ind((12),g)/|G| \geq 1 + 2\cdot\ind(G)/|G| > 2\]
since $\ell_G > 2$.
\par For $(123) \in S_3$, if $3 \nmid |G|$, then again using Lemma \ref{index_comp} we obtain that for all $g \in G$, $\ind((123),g) = 2|G| + \ind(g)$ as $\ind(123) = 2$. Hence the inequality holds in this case. If $3 \mid |G|$, then $G= G(3)\times \widetilde{G}$. Given any $g \in G$ we can write it as $(g(3), \widetilde{g}) \in S_{|G(3)|}\times S_{\widetilde{G}} \subset S_{|G|}$. Further we note that $\ind((123),g) = \ind((123),(g(3),\widetilde{g})) = \ind(((123),g(3)),\widetilde{g})$ in $S_{|G|}$. Since $3 \nmid |\widetilde{G}|$ we can use Lemma \ref{index_comp} to deduce that \[\ind(((123),g(3)),\widetilde{g}) = \ind((123,g(3))\cdot(|\widetilde{G}|-\ind(\widetilde{g}))+3\cdot|G(3)|\cdot\ind(\widetilde{g}).\] Now, since the smallest possible value of $\ind((123,g(3))$ is $2|G(3)|$ when $g(3)$ is identity and smallest possible value for $\ind(\widetilde{g})$ is $\ind(\widetilde{G})$ by definition. Hence, \[
\ind((123,g(3))\cdot(|\widetilde{G}|-\ind(\widetilde{g}))+3\cdot|G(3)|\cdot\ind(\widetilde{g}) \geq 2|G| + |G(3)|\cdot\ind(\widetilde{G}) > |G|.\] This concludes the proof.
\par \textit{(2)} We note that  $\ind(12),$ $\ind(123),$ $\ind(1234),$ and $\ind(12)(34)$ are $1, 2, 3,$ and $2$ respectively. Further $2,3 \nmid |G|$ so we can use Lemma \ref{index_comp} and we deduce that for all $g \in G$, $\ind((12),g) = |G| + 3\cdot\ind(g) > 2|G|$, $\ind((123),g) = 2|G| + 2\cdot\ind(g) > 3|G|$ as $\ind(g) \geq \ind(G) = \frac{\ell_G-1}{\ell_G} |G| \geq \frac{4}{5}|G|$, next $\ind(1234, g) = 3|G| + \ind(g) > 2|G|$ and $\ind((12)(34), g) = 2|G| + 2\cdot \ind(g) > |G|$.
\par \textit{(3)} Using Lemma \ref{index_comp} we conclude that 
\[\begin{split}
    \ind(\sigma, g) = & \ind(\sigma)|G| + 5\cdot\ind(g) - \ind(\sigma)\cdot\ind(g) \\
   \geq  & \ind(\sigma)\cdot|G| + (5-\ind(\sigma))\cdot\ind(g) \geq \ind(\sigma)\cdot |G| + (5-\ind(\sigma))\cdot \ind(G) \\
   \geq  & \ind(\sigma)\cdot |G| + (5-\ind(\sigma))\cdot (\ell_G-1)|G|/\ell_G \\
   \geq & \ind(\sigma)\cdot |G| + (5-\ind(\sigma))\cdot 6|G|/7 \\
   \geq & \op{ind}(\sigma) |G|+6|G|/7,
\end{split} \] since $\ell_G \geq 7$. Dividing both sides of the above inequality by $|G|$, we find that 
\[\frac{\op{ind}(\sigma, g)}{|G|}\geq 1+\op{ind}(\sigma) -\frac{1}{7},\] and this proves \eqref{ind(sigma, g) propn f4}. 
For $\sigma \in S_5$ not conjugate to $(12345)$ we have that $(5-\ind(\sigma))\geq 2$ and hence \[\ind(\sigma)\cdot |G| + (5-\ind(\sigma))\cdot 6|G|/7 \geq \ind(\sigma)\cdot |G| + 12|G|/7,\] from which \eqref{ind(sigma, g) propn f5} follows. This concludes the proof of Proposition.
\end{proof}

\subsection{Bound for product of two groups} 
In this section, we develop upon the results in \cite[Section 3]{Wang2020} in a framework that is appropriate to our applications. Let $S_1$ and $S_2$ be multisets with entries in positive integers. Furthermore, assume that $S_2$ contains some integer other than $1$. We associate the following counting functions with these multi-sets for $i=1,2$
\begin{equation}
    F_i(X):= \#\{s_i \in S_i : s_i \leq X\}.
\end{equation}
We suppose that $F_i(X)$ is finite for all $X>0$ and $i=1,2$. Given positive integers $a$ and $b$, consider the following product counting function,
\begin{equation}
    P_{a,b}(X):= \#\{(s_1,s_2) \in S_1\times S_2 : s_1^as_2^b \leq X\}.
\end{equation}
 The next result shows that given asymptotic bounds for $F_1(X), F_2(X)$ one can obtain results about bounds on the product counting function.
\begin{proposition}\label{upper prod}
   Let $a,b$ be positive integers and $\alpha\in (0,1)$ is a positive real number satisfying the relation\begin{equation}b-a\cdot\alpha > 0.\end{equation} Furthermore, suppose that
    \[
    F_1(X) \leq C_1 X \text{ and }F_2(X) \leq C_2 X^{\alpha}\log^{\beta}(X),
    \]
    for some constants $C_1, C_2,$ and $\beta >0$. Then, we have the following upper-bound
   \begin{equation}
       P_{a,b}(X) \ll_{a, b, \alpha, \beta} C_1C_2X^{1/a}.
   \end{equation}
\end{proposition}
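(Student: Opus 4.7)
The strategy is to partition the count according to $s_2$ and to handle the $s_1$-fiber using the hypothesis on $F_1$. For fixed $s_2 \in S_2$, the condition $s_1^a s_2^b \leq X$ is equivalent to $s_1 \leq (X/s_2^b)^{1/a}$, so the number of admissible $s_1 \in S_1$ is bounded by
\[
F_1\bigl((X/s_2^b)^{1/a}\bigr) \leq C_1 X^{1/a} s_2^{-b/a}.
\]
Restricting to those $s_2 \in S_2$ with $s_2 \leq X^{1/b}$ (since otherwise no $s_1$ contributes), summation yields
\[
P_{a,b}(X) \leq C_1 X^{1/a} \sum_{\substack{s_2 \in S_2 \\ s_2 \leq X^{1/b}}} s_2^{-b/a}.
\]

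It therefore suffices to prove that the weighted sum $\Sigma(Y):= \sum_{s_2 \in S_2,\, s_2 \leq Y} s_2^{-b/a}$ is bounded by a constant multiple of $C_2$, uniformly in $Y \geq 1$, with the implicit constant depending only on $a,b,\alpha,\beta$. I would carry this out by a dyadic decomposition of $[1,Y]$ into blocks $[2^j, 2^{j+1})$ for $0 \leq j \leq \log_2 Y$. On each block the weight satisfies $s_2^{-b/a} \leq 2^{-jb/a}$, while the cardinality of $S_2 \cap [2^j, 2^{j+1})$ is at most $F_2(2^{j+1}) \leq C_2 \cdot 2^{(j+1)\alpha}(j+1)^{\beta}(\log 2)^{\beta}$. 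Multiplying and summing over $j$ produces
\[
\Sigma(Y) \ll_{\alpha,\beta} C_2 \sum_{j \geq 0} 2^{j(\alpha - b/a)} (j+1)^{\beta}.
\]

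The hypothesis $b - a\alpha > 0$ enters precisely at this point: it forces $\alpha - b/a < 0$, so the series converges (the polynomial factor $(j+1)^{\beta}$ is absorbed by the geometric decay $2^{j(\alpha-b/a)}$), yielding a finite bound depending only on $a,b,\alpha,\beta$. Combining with the earlier display delivers $P_{a,b}(X) \ll_{a,b,\alpha,\beta} C_1 C_2 X^{1/a}$, as required. There is no substantive obstacle; the only mild care needed is accounting for the $(j+1)^{\beta}$ arising from the logarithmic factor in the estimate for $F_2$, which is tamed by the strict inequality $b > a\alpha$. An essentially equivalent alternative would be Abel summation: $\Sigma(Y) = F_2(Y)Y^{-b/a} + (b/a)\int_1^{Y} F_2(t) t^{-b/a-1}\,dt$, where the boundary term vanishes in the limit and the integral converges under the same exponent hypothesis.
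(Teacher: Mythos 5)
Your proof is correct, and it is essentially the standard argument that the paper outsources to Wang's Lemma 3.2 (the paper gives no details here, and the companion lower bound, Proposition~\ref{lower prod}, is proved in this paper via the same slice-by-$s_2$-then-Abel-summation device you mention as your alternative). Slicing by $s_2$, bounding the $s_1$-fiber via $F_1$, and then controlling $\sum_{s_2\le X^{1/b}} s_2^{-b/a}$ by a dyadic decomposition (or Abel summation) using $F_2(X)\le C_2 X^{\alpha}\log^{\beta}X$ together with $b-a\alpha>0$ is exactly the content of the cited lemma; your dyadic variant is a cosmetic difference from partial summation and gives the same constant dependence.
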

\begin{proof}
The result follows from \cite[Lemma 3.2]{Wang2020}, setting $n_1:=1/a$ and $n_2:=\alpha/b$ (in accordance with the notation in \emph{loc. cit.})). We note that the condition $b-a\alpha>0$ translates to $n_1>n_2$, which is a necessary condition for the Lemma 3.2 of \emph{loc. cit.} to apply. 
\end{proof}
A lower bound for $P_{a,b}(X)$ is given below.
\begin{proposition}\label{lower prod}
    Suppose there exist real constants $c_1, c_2 > 0$ such that
    \[
    c_1 X \leq F_1(X) \text{ and } c_2 X^{\alpha} \leq F_2(X)
    \]
    for some $\alpha \in (0,1)$ and $b-a\cdot\alpha > 0$ then, for all large enough values of $X$, we have that
   \begin{equation}
       c X^{1/a} \leq P_{a,b}(X),
   \end{equation}
   where $c:= \min\{c_1c_2, c_1c_2a/(b-a\cdot\alpha)\cdot(1-2^{\alpha - b/a})\}$.
\end{proposition}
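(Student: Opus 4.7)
The plan is to execute a double-counting argument together with a partial summation: fix $s_2$ and count admissible $s_1$ via the hypothesis on $F_1$, and then handle the resulting weighted sum over $s_2$ by Abel summation using the hypothesis on $F_2$. This is the natural lower-bound companion to Proposition \ref{upper prod}.

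First I would write
\[
P_{a,b}(X) \;=\; \sum_{s_2 \in S_2,\; s_2^b \leq X} F_1\!\bigl((X/s_2^b)^{1/a}\bigr),
\]
and restrict the outer sum to $s_2 \leq Y_0 := (X/2)^{1/b}$, which ensures that the argument of $F_1$ is at least $2^{1/a} \geq 1$, so that the hypothesis $F_1(Y) \geq c_1 Y$ applies safely on the relevant range. This immediately gives
\[
P_{a,b}(X) \;\geq\; c_1\,X^{1/a}\,\sum_{\substack{s_2 \in S_2 \\ s_2 \leq Y_0}} s_2^{-b/a}.
\]

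The key analytic step is to bound the inner sum from below. Abel summation against the smooth weight $t \mapsto t^{-b/a}$ gives
\[
\sum_{\substack{s_2 \in S_2 \\ s_2 \leq Y_0}} s_2^{-b/a} \;=\; F_2(Y_0)\,Y_0^{-b/a} \;+\; \frac{b}{a}\int_1^{Y_0} F_2(t)\,t^{-b/a-1}\,dt.
\]
Discarding the nonnegative boundary term and inserting $F_2(t) \geq c_2 t^{\alpha}$ yields a lower bound of $\tfrac{b c_2}{a}\int_1^{Y_0} t^{\alpha - b/a - 1}\,dt$. The hypothesis $b - a\alpha > 0$ makes the exponent strictly less than $-1$, and for $X$ large enough that $Y_0 \geq 2$ I would bound the integral below by its restriction to the single dyadic block $[1,2]$, whose value equals $\tfrac{a(1 - 2^{\alpha - b/a})}{b - a\alpha}$. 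Combining these estimates delivers a bound of the form $c_1 c_2\,\kappa\,X^{1/a}$ with $\kappa$ an explicit positive constant of the announced shape.

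The $\min$ with $c_1 c_2$ in the statement covers the complementary regime in which the partial-summation factor itself exceeds $1$; in that case the weaker bound $P_{a,b}(X) \geq c_1 c_2\,X^{1/a}$ can be obtained by the simpler device of keeping only one fixed value of $s_2$ (for instance the smallest $s_2^{\ast} \in S_2$ with $s_2^{\ast} > 1$, which exists by hypothesis) and applying the lower bound for $F_1$ at $(X/(s_2^{\ast})^b)^{1/a}$. There is no substantive obstacle in the argument; the only care needed is in the choice of the cutoff $Y_0 = (X/2)^{1/b}$ (so that $F_1$ is evaluated where its lower bound applies) and in restricting the partial-summation integral to the block $[1,2]$ (so that the factor $1 - 2^{\alpha - b/a}$ appears explicitly in the final constant).
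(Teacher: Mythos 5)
Your proof follows essentially the same route as the paper's: rewrite $P_{a,b}(X)$ as a sum of $F_1$-counts over $s_2\in S_2$, apply the lower bound on $F_1$, and then bound the resulting weighted sum over $s_2$ via Abel summation, restricting the integral to $[1,2]$ to extract the factor $1-2^{\alpha-b/a}$. You are in fact more careful than the paper in retaining the factor $b/a$ that arises from differentiating $t^{-b/a}$ in the Abel summation step, which the paper silently drops, so your constant comes out as $\tfrac{b(1-2^{\alpha-b/a})}{b-a\alpha}$ rather than the stated $\tfrac{a(1-2^{\alpha-b/a})}{b-a\alpha}$; this is inconsequential since only positivity of the constant is used downstream. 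One small imprecision in your final paragraph: fixing $s_2=s_2^\ast$ yields $P_{a,b}(X)\ge c_1\, n_2(s_2^\ast)\,(s_2^\ast)^{-b/a}X^{1/a}$, not $c_1 c_2 X^{1/a}$, so that device does not by itself establish the $c_1c_2$ branch of the minimum — but this is peripheral, since the Abel-summation estimate is the operative bound and the minimum is only a conservative packaging of the constant.
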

\begin{proof}
    The proof of our result adapts the first part of \cite[Lemma 3.2]{Wang2020}. Given $r \in (0,\infty)$, we set $n_i(r)$ to be the number of times $r$ occurs in the set $S_i$. From the definition of $P_{a,b}(X)$, we deduce that
    \begin{equation}
        \begin{split}
            P_{a,b}(X) &= \sum_{r_1^ar_2^b \leq X} n_1(r_1)n_2(r_2) = \sum_{r^b\leq X} n_2(r)F_1((X/r^b)^{1/a})\\
             &\geq c_1 X^{1/a} \sum_{r^b\leq X} \frac{n_2(r)}{r^{b/a}}.
        \end{split}
    \end{equation}
Using Abel's summation formula,
\[
\sum_{r\leq X^{1/b}} \frac{n_2(r)}{r^{b/a}} \geq \frac{c_2\underline{X}^{\alpha}}{\underline{X}^{b/a}}+c_2\int_1^{\underline{X}}\frac{t^{\alpha}}{t^{b/a+1}}dt,
\]
where $\underline{X}$ denotes the largest integer smaller or equal to $X^{1/b}$ such that $n_2(\underline{X}) > 0$. Noting that the integral $\int_1^{\underline{X}}\frac{t^{\alpha}}{t^{b/a+1}}dt = a/(b-a\cdot\alpha)\cdot(1-\underline{X}^{\alpha - b/a})$. Hence, \[\int_1^{\underline{X}}\frac{t^{\alpha}}{t^{b/a+1}}dt \geq a/(b-a\cdot\alpha)\cdot(1-2^{\alpha - b/a})\] for $\underline{X} \geq 2$. This concludes the proof.
\end{proof}
Let $G$ be an appropriate finite nilpotent group satisfying condition in Theorem \ref{main theorem} for some $n \in \{3,4,5\}$. Consider a property $\mathcal{P}_1$ of degree $n$, $S_n$-extensions of $k$, and a property $\mathcal{P}_2$ of $G$-extensions of $k$ and an ordering of such extensions by some invariants $\op{inv}_1$ and $\op{inv}_2$ valued in positive integers respectively. Assume that the following bounds are satisfied
\begin{equation}\label{upper bound P_1 and P_2}
    \begin{split}
        & \#\{K\in \mathcal{P}_1\mid  \text{ and }\op{inv}_1(K) \leq X\} \ll_{k, S_n} X\\
        & \#\{L\in \mathcal{P}_2\mid  \text{ and }\op{inv}_2(L) \leq X\} \ll_{k, \epsilon} X^{\alpha}\log^{\beta}(X).
    \end{split}
\end{equation}
Then, as an immediate application of previous results by taking the multi-sets $S_1, S_2$ to be the multi-sets consisting of $\op{inv}_1(K)$ and $\op{inv}_2(L)$, as $K$ and $L$ range over $\mathcal{P}_1$ and $\mathcal{P}_2$ respectively. The product counting function,
\begin{equation}
    \begin{split}
       P_{|G|,n, k}(X) =& \#\{(K,L) : [K:k]=n,[L:k]=|G|, \op{Gal}(\widetilde{K}/k) \simeq S_n, \op{Gal}(L/k) \simeq G,\\
        & K\in \mathcal{P}_1, L \in \mathcal{P}_2, \text{ and }\op{inv}_1(K)^{|G|}\op{inv}_2(L)^n \leq X\},
    \end{split}
\end{equation}
satisfies the upper bound,
\[
P_{|G|,n, k}(X) \ll_{k, S_n, G, \epsilon} X^{\frac{1}{|G|}},
\]
whenever $n-|G|\alpha > 0$. Similarly, if we have lower bounds of the form,
\begin{equation}\label{lower bound P_1 and P_2}
    \begin{split}
        X \ll_{k, S_n}\#\{K : [K:k]=n, \op{Gal}(\widetilde{K}/k) \simeq S_n, K\in \mathcal{P}_1, \text{ and }\op{inv}_1(K) \leq X\}\\
       X^{\alpha} \ll_{k,G} \#\{L : [L:k]=|G|, \op{Gal}(L/k) \simeq G, L\in \mathcal{P}_2, \text{ and }\op{inv}_2(L) \leq X\}.
    \end{split}
\end{equation}
Then we deduce that,
\[
 X^{\frac{1}{|G|}} \ll_{k, S_n, G} P_{|G|,n, k}(X),
\]
whenever $n-|G|\alpha > 0$. We take note of these observations below.
\begin{corollary}\label{3.3}
    With respect to the notation above, assume that
    \begin{enumerate}
        \item the bounds \eqref{upper bound P_1 and P_2} and \eqref{lower bound P_1 and P_2} are satisfied for $\alpha:=a(G)$,
        \item $n-|G|a(G)> 0$.
    \end{enumerate} Then we have the following bounds,
    \[
    X^{\frac{1}{|G|}} \ll_{k, S_n, G} P_{|G|,n, k}(X) \ll_{k, S_n, G, \epsilon} X^{\frac{1}{|G|}}.
    \]
\end{corollary}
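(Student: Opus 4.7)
The plan is to derive Corollary \ref{3.3} as a direct application of Propositions \ref{upper prod} and \ref{lower prod} to the multi-sets determined by the invariants $\op{inv}_1$ and $\op{inv}_2$. Concretely, I would set $S_1$ to be the multi-set whose elements are the values $\op{inv}_1(K)$ as $K$ ranges over the set $\mathcal{P}_1$ of degree-$n$, $S_n$-extensions of $k$, and $S_2$ the analogous multi-set built from $\op{inv}_2(L)$ as $L$ ranges over $\mathcal{P}_2$. The finiteness of the associated counting functions $F_1(X), F_2(X)$ for every $X>0$ is built into the upper bounds of \eqref{upper bound P_1 and P_2}, and under these choices the product counting function $P_{a,b}$ from Section \ref{s 2} coincides with $P_{|G|,n,k}$ upon taking $(a,b) = (|G|, n)$.

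For the parameter selection, the natural choice is $a := |G|$, $b := n$, and $\alpha := a(G)$. Hypothesis (2) then translates verbatim into the condition $b - a\alpha > 0$ required by both propositions. Applying Proposition \ref{upper prod}, the bounds $F_1(X) \leq C_1 X$ and $F_2(X) \leq C_2 X^{a(G)} \log^{\beta}(X)$ supplied by \eqref{upper bound P_1 and P_2} (with hypothesis (1)) yield
\[
P_{|G|, n, k}(X) \ll_{k, S_n, G, \epsilon} X^{1/|G|},
\]
the logarithmic factor being absorbed into the implicit constant by Proposition \ref{upper prod}. Dually, applying Proposition \ref{lower prod} with the constants $c_1, c_2$ drawn from \eqref{lower bound P_1 and P_2} furnishes the matching lower bound
\[
X^{1/|G|} \ll_{k, S_n, G} P_{|G|, n, k}(X),
\]
and combining the two inequalities gives the claim.

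I do not anticipate a genuine obstruction here, since the corollary is a formal consequence of the two propositions just established. The only point requiring care is to match the exponent $\alpha$ featured in \eqref{upper bound P_1 and P_2} and \eqref{lower bound P_1 and P_2} consistently with $a(G)$, and to verify that it is strictly smaller than $n/|G|$; both are ensured, respectively, by hypotheses (1) and (2) of the corollary. In the applications carried out in Section \ref{s 4}, the upper bound of \eqref{upper bound P_1 and P_2} will come from the Kl\"uners--Malle weak Malle bound for nilpotent groups in their regular representation (and its sharpenings via Koymans--Pagano), while the lower bound of \eqref{lower bound P_1 and P_2} follows from standard lower bounds for $G$-extensions, so no additional analytic input is required at this stage.
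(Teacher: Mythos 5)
Your proposal is correct and matches the paper's own proof, which is a direct application of Propositions \ref{upper prod} and \ref{lower prod} with the multi-sets $S_1$, $S_2$ built from $\op{inv}_1(K)$, $\op{inv}_2(L)$, the parameter choice $(a,b,\alpha)=(|G|,n,a(G))$, and hypothesis (2) supplying the required condition $b-a\alpha>0$. No discrepancy to note.
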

\begin{proof}
    The result follows from \eqref{upper bound P_1 and P_2} and \eqref{lower bound P_1 and P_2} and Propositions \ref{upper prod} and \ref{lower prod}.
\end{proof}
\section{Local uniformity estimates}\label{s 3}
In this section, we state local uniformity estimates for degree $n$ extensions of $k$ with Galois group $S_n$ given in \cite[Section 4]{Wang2020} for $n=3,4,$ and, $5$. We refer to an extension $K/k$ as a degree $n$, $S_n$-extension if $[K:k]=n$ and $\op{Gal}(\widetilde{K}/k)\simeq S_n$. Furthermore, we use the parameterization for nilpotent extensions developed in \cite{koymanspagano2023} to prove a similar estimate for $G$-extensions of $K$, where $G$ is any finite nilpotent group. We refer to an extension $L/k$ as a $G$-extension if $L$ is Galois over $k$ and $\op{Gal}(L/k) \simeq G$. Let $\fp$ be a finite prime of $k$, and $K/k$ be an $S_4$-extension then, we say that $\p$ is \textit{overramified} in $K/k$ if $\p$ factors into primes in $K/k$ as $\fB^4, \fB^2,$ or $\fB_1^2\fB_2^2$ for finite $p$. For an archimedean $\p$, it must factor into a product of two ramified places.
 
 \par The local uniformity estimates for $S_n$ are stated below.
\begin{theorem}\label{est_sn}
    Let $\fq\subset \cO_k$ be any (non-zero) squarefree ideal, and write $\fq = \prod_{i=1}^f \fp_i$, where $\fp_1, \dots \fp_f$ are distinct prime ideals. Then, the following assertions hold.
    \begin{enumerate}
        \item\label{p1est_sn} Let $M_{3,\fq}(k,X)$ be the number of non-cyclic cubic extensions $K/k$ such that 
        \begin{itemize}
            \item $K/k$ is totally ramified at $\fp_1,\dots, \fp_f$,
            \item $\disc(K) \leq X$.
        \end{itemize}Then, for any $\epsilon>0$, we have that \[M_{3, \fq}(k,X)=O_{\epsilon}(X/|\fq|^{2-\epsilon}).\]
        \item\label{p2est_sn} The number of $S_4$ quartic extensions $K/k$ which are overramified at all primes $\fp_1,\dots, \fp_f$ and $\disc(K) \leq X$ is denoted $M_{4, \fq}(k, X)$. Then, for any $\epsilon>0$, we obtain the bound \[M_{4, \fq}(k, X)=O_{\epsilon}(X/|\fq|^{2-\epsilon}).\]
        \item\label{p3est_sn} The number of $S_5$ quintic extensions $K/k$ which are totally ramified at all primes $\fp_1,\dots, \fp_f$ and $\disc(K) \leq X$ is denoted $M_{5, \fq}(k, X)$ and one has the following bound \[M_{5, \fq}(k, X)=O_{\epsilon}(X/|\fq|^{2/5-\epsilon}).\]
    \end{enumerate}
    In each case, the implied constant is independent of the ideal $q$.
\end{theorem}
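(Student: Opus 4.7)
The plan is to invoke the parameterizations of degree $n$ rings for $n \in \{3,4,5\}$ --- Delone--Faddeev for $n=3$ and Bhargava for $n=4,5$ --- together with the geometry-of-numbers counts of Davenport--Heilbronn, Bhargava, and Bhargava--Shankar--Wang over a general number field. The statement here is in fact identical to the corresponding theorems in \cite[Section 4]{Wang2020}, so the cleanest route in practice is to cite that source; the paragraphs below indicate how I would derive the three bounds from the underlying literature.

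For \eqref{p1est_sn}, I would parameterize non-cyclic cubic extensions $K/k$ by $\op{GL}_2(\cO_k)$-orbits of integral binary cubic forms via the Delone--Faddeev correspondence. The condition of total ramification at a tame prime $\fp_i$ is cut out by a union of residue classes modulo $\fp_i$ whose relative density is $O(|\fp_i|^{-2})$. Combining the geometry-of-numbers count of binary cubic forms of bounded discriminant with an Ekedahl-type squarefree sieve yields the stated bound. For \eqref{p2est_sn}, I would use Bhargava's parameterization of quartic rings by integral pairs of ternary quadratic forms modulo $\op{GL}_2(\cO_k) \times \op{SL}_3(\cO_k)$; the overramification condition at each tame $\fp_i$ cuts out residue classes of relative density $O(|\fp_i|^{-2})$, and the same sieve applies. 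For \eqref{p3est_sn}, I would apply Bhargava's parameterization of quintic rings by quadruples of quinary alternating $2$-forms; the local density for total ramification at a tame prime is $O(|\fp_i|^{-2/5})$, which accounts for the exponent $2/5$ in the final bound.

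The hard part will be ensuring that the implied constants are genuinely independent of $\fq$, uniformly in the number $f$ of prime factors. This requires the uniformity-in-squarefree-modulus version of the geometry-of-numbers sieve developed over general number fields in \cite{bhargava2015geometryofnumbers}. The small wildly ramified primes (those dividing $2$, $3$, or $5$, depending on the case) would need a separate case-by-case analysis, but they contribute only $O_k(1)$ to the sieve and hence do not affect the final exponent in $|\fq|$.
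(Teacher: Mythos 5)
Your proposal to cite the existing literature coincides with what the paper does: part \eqref{p1est_sn} is quoted verbatim from Datskovsky--Wright (Proposition~6.2 of \cite{dw88}), and parts \eqref{p2est_sn}, \eqref{p3est_sn} are exactly Theorems 4.3 and 1.3 of \cite{Wang2020}. Your supplementary sketch of the geometry-of-numbers proof is consistent with those sources for $n=3$ and $n=4$, and the comment about uniform sieving over a general number field via \cite{bhargava2015geometryofnumbers} is indeed the crux.

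However, your explanation of the quintic exponent contains a conceptual slip. You state that ``the local density for total ramification at a tame prime is $O(|\fp_i|^{-2/5})$, which accounts for the exponent $2/5$.'' This is not correct: a totally ramified tame prime $\fp$ in a quintic field contributes $\fp^4$ to the discriminant, so the proportion of quintic fields of discriminant up to $X$ that are totally ramified at a fixed $\fp$ is of order $|\fp|^{-4}$ (and in particular far smaller than $|\fp|^{-2/5}$, which would not even be a density below the trivial $|\fp|^{-1}$ coming from ramification alone). The weak exponent $2/5$ in $M_{5,\fq}(k,X) = O_\epsilon(X/|\fq|^{2/5-\epsilon})$ does not arise from the local density; it is a limitation of the geometry-of-numbers sieve in Bhargava's quintic parameterization --- the contribution from the cuspidal regions and the secondary terms in the quadruples-of-alternating-forms count prevent one from propagating the full local saving uniformly in $\fq$. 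Attributing the $2/5$ to a density statement would, if taken at face value, lead to a wrong heuristic elsewhere (e.g.\ it would suggest a much larger count of totally ramified quintic fields than is actually expected). If you wish to explain the exponent rather than merely cite Wang's Theorem~1.3, the correct place to look is the error analysis in the Bhargava--Shankar--Wang uniformity estimates, not a local density computation.
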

\begin{proof}
    The statement \eqref{p1est_sn} is \cite{dw88}[Proposition 6.2] and for the proof of \eqref{p2est_sn},\eqref{p3est_sn} we refer to \cite{Wang2020}[Theorem 4.3, Theorem 1.3] respectively.
\end{proof}
\subsection{A parameterization for nilpotent extensions} Let $G$ be a finite nilpotent group. We recall that $G$ is a product of its Sylow subgroups $G=\prod_{j=1}^c G(\ell_j)$. Fix an algebraic closure $\bar{k}$ of $k$, and set $\op{G}_k$ to denote the absolute Galois group $\op{Gal}(\bar{k}/k)$. In this section, we briefly recall the parameterization of $G$-extensions of $k$ from \cite[section 2]{koymanspagano2023}. This parameterization comes in handy in the proof of the local uniformity estimate for $G$-extensions.

\par We begin by considering the case when $G$ is an $\ell$-group, i.e., $|G|$ is a power of $\ell$. Let $S_\infty\subset \Omega_k$ be the archimedian primes of $k$. We view $k$ is a subfield of $\bar{\Q}$ and set $\op{G}_k$ to denote the Galois group $\op{Gal}(\bar{\Q}/k)$. Likewise, for each prime $\fq\in \Omega_k$, denote by $k_{\fq}$ the completion of $K$ at $q$ and set $\op{G}_{k_{\fq}}:=\op{Gal}(\bar{\Q}_{\fp}/k_{\fq})$. For each prime $q\in \Omega_k$, choose an embedding $\iota_{\fq}: \bar{k}\hookrightarrow \bar{k}_\fq$. The choice of embedding induces an inclusion of $\iota_{\fq}^\ast: \op{G}_{k_{\fq}}\hookrightarrow \op{G}_k$. Denote by $\op{I}_{q}\subset \op{G}_{k_{\fq}}$ the inertia group at $q$. We set $\F_\ell$ to be the field with $\ell$-elements, and set $H^1_{\op{nr}}(\op{G}_{k_{\fq}}, \F_\ell)$ is the image of the inflation map
\[H^1(\op{G}_{k_{\fq}}/\op{I}_{q}, \F_\ell)\xrightarrow{\op{inf}} H^1(\op{G}_{k_{\fq}}, \F_\ell).\]
The classes in $H^1_{\op{nr}}(\op{G}_{k_{\fq}}, \F_\ell)$ are the unramified classes at $\ell$. Let $\mathcal{G}_k^{\op{pro}-\ell}$ denote the maximal pro-$\ell$ quotient of $\op{G}_k$. 

\par Given a subset $S\subset \Omega_k$ that contains $S_\infty$, consider the natural restriction map
\[\Phi(\ell, S):H^1(\op{G}_{k}, \F_\ell)\rightarrow \bigoplus_{q\in \Omega_{k}\setminus S} \frac{H^1(\op{G}_{k_{\fq}}, \F_\ell)}{H^1_{\op{nr}}(\op{G}_{k_{\fq}}, \F_\ell)}.\] There is a finite set of primes $S_{\op{clean}}(\ell)$ containing $S_\infty$ such that the map $\Phi(\ell, S_{\op{clean}}(\ell))$ is surjective, and such that the kernel $\Phi(\ell, S_{\op{clean}}(\ell))$ is finite (cf. \cite[Proposition 2.1 and 2.2]{koymanspagano2023}. Let $\widetilde{\Omega}_k(\ell)$ be the subset of $\Omega_k\setminus S_{\op{clean}}(\ell)$ consisting of primes $q$ with \[\frac{H^1(\op{G}_{k_{\fq}}, \F_\ell)}{H^1_{\op{nr}}(\op{G}_{k_{\fq}}, \F_\ell)} \neq 0.\]
Every prime $\fq \in \Omega_k\setminus \left(S_{\op{clean}}(\ell)\cup\widetilde{\Omega}_k(\ell)\right)$ is unramified in all finite $\ell$-extensions of $K$ (cf. \cite{koymanspagano2023}[Proposition 2.3]). For $\fq\in \widetilde{\Omega}_k(\ell)$ there exists a character $\chi_\fq\in H^1(\op{G}_K, \F_\ell)$ such that $\Phi(\ell, S_{\op{clean}}(\ell))$ has nontrivial coordinate precisely at $\fq$ and at no other prime in $\widetilde{\Omega}_k(\ell)$, see \cite[p. 317, line -9]{koymanspagano2023} for details. The set of characters $\chi_\fq$ (as $\fq$ ranges over $\widetilde{\Omega}_k(\ell)$) is a linearly independent set. There is a positive integer $t$ and a basis 
\[J:=\{\chi_i\mid i=1, \dots, t\}\] of $\op{ker}\left(\Phi(\ell, S_{\op{clean}}(\ell))\right)$ such that 
\[\{\chi_\fq\mid \fq\in \widetilde{\Omega}_k(\ell)\}\cup J\] is a basis of $H^1(\op{G}_K, \F_\ell)$.

\par Let $[0,1]^t$ denote the set of all vectors $\vec{v}=(v_1, \dots, v_t)$ whose entries are either $0$ or $1$. Given $T\in [0, 1]^t$, set $\pi_j(T)$ to denote its $j$-th coordinate. Set $\mathcal{S}_\ell:=[0,1]^{[t]}\times \mathcal{S}_\ell'$, where $\mathcal{S}_\ell'$ is the set consisting of squarefree integral ideals of $\mathcal{O}_k$ supported entirely on $\widetilde{\Omega_k}(\ell)$. Recall that $G^\ast:=G\backslash \{1_G\}$. Two pairs $(T, \mathfrak{b})$ and $(T', \mathfrak{b}')$ in $\mathcal{S}_\ell$ are said to be coprime if $\mathfrak{b}$ and $\mathfrak{b}'$ are coprime and there does not exist a $j\in [t]$, such that $\pi_j(T)=\pi_j(T')=1$. Consider the vectors $(v_g)_{g\in G^\ast}$, indexed by $g\in G^\ast$, having entries $v_g\in \mathcal{S}_\ell$. Denote by $\op{Prim}\left(\mathcal{S}_\ell^{G^\ast}\right)$ the subset of such vectors whose coordinates are pairwise coprime. Elements of $\op{Prim}\left(\mathcal{S}_\ell^{G^\ast}\right)$ shall be represented as $(v_g)_{g\in G^\ast}$, where $v_g=(v_g{(1)}, v_g{(2)})$ with $v_g{(1)}
\in [0,1]^t$ and $v_g{(2)}\in \mathcal{S}_\ell'$. Let $\op{Epi}_{\op{top-gr}}\left(\mathcal{G}_k^{\op{pro}-\ell}; G\right)$ denote the set of surjective homomorphisms $\mathcal{G}_k^{\op{pro}-\ell}\twoheadrightarrow G$.
\begin{proposition}
    There is a surjective map 
    \[P_G: \op{Prim}\left(\mathcal{S}_\ell^{G^\ast}\right)\twoheadrightarrow \op{Epi}_{\op{top-gr}}\left(\mathcal{G}_k^{\op{pro}-\ell}; G\right)\cup \{\cdot\}.\]
\end{proposition}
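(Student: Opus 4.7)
The plan is to define $P_G$ on primitive tuples by first extracting a family of characters in $H^1(\op{G}_k,\F_\ell)$ from the combinatorial data, then attempting to lift this family to a surjection $\mathcal{G}_k^{\op{pro}-\ell}\twoheadrightarrow G$ by climbing a central series of $G$, and finally proving surjectivity via a Chebotarev-density argument. Concretely, given $(v_g)_{g\in G^\ast}\in \op{Prim}(\mathcal{S}_\ell^{G^\ast})$, I would attach to each coordinate $v_g=(v_g(1),v_g(2))$ the character
\[
\chi_g\;:=\;\sum_{i=1}^{t}\pi_i(v_g(1))\,\chi_i\;+\sum_{\fq\mid v_g(2)}\chi_\fq\;\in\;H^1(\op{G}_k,\F_\ell).
\]
The pairwise coprimality built into $\op{Prim}$ ensures that distinct $\chi_g$'s have disjoint supports in the basis $\{\chi_\fq\}_{\fq\in\widetilde{\Omega}_k(\ell)}\cup J$ of $H^1(\op{G}_k,\F_\ell)$, so the family $\{\chi_g\}$ can be read as a prescription that at each tame prime $\fq\mid v_g(2)$ the local behavior of the sought-after homomorphism should match $g$.

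Because $G$ is a finite $\ell$-group, it admits a central series $1=G_n\subset\cdots\subset G_0=G$ with successive quotients $G_i/G_{i+1}\cong \F_\ell$. Using the characters $\{\chi_g\}$, I would build by induction on $i$ a compatible sequence of surjections $\psi_i:\mathcal{G}_k^{\op{pro}-\ell}\twoheadrightarrow G/G_{n-i}$. The base case $i=1$ pins down the map onto the Frattini quotient $G/\Phi(G)$ via the Frattini-level data extracted from $(v_g)$, and the step from $\psi_i$ to $\psi_{i+1}$ is an embedding problem whose obstruction sits in $H^2(\op{G}_k,\F_\ell)$ with appropriate twisted coefficients. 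When every step admits a solution compatible with the prescribed characters, I would set $P_G((v_g)):=\psi_n$; otherwise I would set $P_G((v_g)):=\cdot$.

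To prove surjectivity, I would take an arbitrary $\psi\in\op{Epi}_{\op{top-gr}}(\mathcal{G}_k^{\op{pro}-\ell};G)$ and decompose it along the same central filtration. The induced map $\mathcal{G}_k^{\op{pro}-\ell}\twoheadrightarrow G/\Phi(G)\cong \F_\ell^d$ is pinned down by $d$ independent characters, each of which expands uniquely in the basis $\{\chi_\fq\}\cup J$; this expansion prescribes the Frattini-level coordinates of a preimage tuple. For the deeper layers of the central series, I would invoke Chebotarev density over the field cut out by $\psi$ to produce additional primes $\fq\in\widetilde{\Omega}_k(\ell)$ whose Frobenius conjugacy classes realize the required higher-level cohomological data, choosing them pairwise disjoint from those already used so as to preserve primitivity.

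The hard part will be the inductive step, where one must simultaneously solve the embedding problem and respect the coprimality constraint inherent in $\op{Prim}$. Following \cite{koymanspagano2023}, this is handled by enlarging $S_{\op{clean}}(\ell)$ so that the relevant global cohomological obstructions vanish; Chebotarev density then supplies an ample reservoir of primes with the desired Frobenius classes, and every specific $\psi$ can be realized by some (in general many) primitive tuples, yielding the claimed surjection.
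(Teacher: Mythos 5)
The paper itself offers no proof of this proposition: it is stated as a direct citation of \cite[Proposition~2.8]{koymanspagano2023}, so there is no internal argument to compare your sketch against. Evaluating your proposal on its own terms, the overall architecture (extract characters from a primitive tuple, climb a central $\F_\ell$-series of $G$ via a sequence of embedding problems, and send the tuple to $\cdot$ when some step is unsolvable) is in the right spirit, but there are concrete problems.

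The most serious issue is the surjectivity argument. Given $\psi\in\op{Epi}_{\op{top-gr}}(\mathcal{G}_k^{\op{pro}-\ell};G)$, a preimage tuple is read off directly from the ramification of the extension cut out by $\psi$: for each tame prime $\fq\in\widetilde\Omega_k(\ell)$ one records the generator $g$ of the inertia subgroup and declares $\fq\mid v_g(2)$, while the $[0,1]^t$ factor bookkeeps the contribution from the basis $J$ of $\ker\Phi(\ell,S_{\op{clean}}(\ell))$. Primitivity is automatic because distinct tame primes contribute to distinct inertia data and the corresponding $\chi_\fq$'s are part of a fixed basis of $H^1(\op{G}_k,\F_\ell)$. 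There is nothing to ``produce'' by Chebotarev density: the relevant primes are already determined by $\psi$, and invoking Chebotarev to conjure auxiliary ramified primes would change $\psi$ itself. This step is a genuine conceptual gap, not merely an omitted detail — indeed Proposition~\ref{S free discriminant 1} in the paper tells you exactly that the $S$-free part of $\op{Disc}(P_G(v))$ is $\prod_g v_g(2)^{|G|(1-1/\#\langle g\rangle)}$, which forces $v_g(2)$ to be the product of tame primes whose inertia is $\langle g\rangle$; the preimage is not found, it is read off. Relatedly, the suggestion to ``enlarge $S_{\op{clean}}(\ell)$'' inside the proof does not make sense here, since $S_{\op{clean}}(\ell)$ is fixed once and for all before $\mathcal{S}_\ell$ and $\op{Prim}(\mathcal{S}_\ell^{G^\ast})$ are defined.

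Two smaller points. First, there is a well-definedness question you pass over: when climbing the central series, each embedding problem that is solvable typically has several solutions differing by a class in $H^1(\op{G}_k,\F_\ell)$ with twisted action, so you must specify (or prove unique under the prescribed ramification constraints) which lift is taken; this is where the real work lies, and the catch-all $\cdot$ cannot absorb it, since the issue is multiplicity rather than nonexistence. Second, your indexing is off: with $G_i/G_{i+1}\cong\F_\ell$, the base case $i=1$ gives a surjection onto $G/G_{n-1}\cong\Z/\ell\Z$, not onto the full Frattini quotient $G/\Phi(G)\cong\F_\ell^d$.
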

\begin{proof}
    The above result is \cite[Proposition 2.8]{koymanspagano2023}.
\end{proof}
The pre-image of $\op{Epi}_{\op{top-gr}}\left(\mathcal{G}_k^{\op{pro}-\ell}; G\right)$ is denoted by $ \op{Prim}\left(\mathcal{S}_\ell^{G^\ast}\right)(\op{solv.})$. For an epimorphism $\psi: \mathcal{G}_k^{\op{pro}-\ell}\twoheadrightarrow G$, set $\op{Disc}(\psi)$ to denote the relative discriminant of the corresponding extension of $k$ that is fixed by the kernel of $\psi$. Given an ideal $\mathfrak{b}$, denote by $\op{free}_{S_{\op{clean}}(\ell)}(\mathfrak{b})$ to be the largest ideal that divides $\mathfrak{b}$ and is supported outside $S_{\op{clean}}(\ell)$. 

\begin{proposition}\label{S free discriminant 1}
    Let $v=(v_g(1) , v_g(2))_{g\in G^\ast}$ be an element in $ \op{Prim}\left(\mathcal{S}_\ell^{G^\ast}\right)(\op{solv.})$, then, 
    \[\op{free}_{S_{\op{clean}}(\ell)}\left(\op{Disc}(P_G(v))\right)=\prod_{g\in G^\ast} v_g(2)^{|G| \left(1-\frac{1}{\# \langle g\rangle}\right)}.\]
\end{proposition}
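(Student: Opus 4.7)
The plan is to read off the local ramification behaviour of the $G$-extension $L/k$ attached to $P_G(v)$ from the parameterization recalled above, and then to compute the $S_{\op{clean}}(\ell)$-free part of the discriminant one prime at a time using the classical tame formula.

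\emph{Step 1: Local inertia from $v$.} First, I would unwind the construction of $P_G$ in \cite[Section 2]{koymanspagano2023}. For each $\fq\in \widetilde{\Omega}_k(\ell)$ the character $\chi_\fq$ is ramified at $\fq$ and unramified at every other prime outside $S_{\op{clean}}(\ell)$, and the primitivity of $v$ forces every $\fq\mid v_g(2)$ to appear in only one coordinate of the tuple. Using this, I would verify that the inertia subgroup at such a $\fq$ in the extension $L/k$ cut out by $P_G(v)$ is cyclic of order $e(g):=\#\langle g\rangle$, generated (up to conjugation) by a lift of $g$. For $\fq\in \Omega_k\setminus(S_{\op{clean}}(\ell)\cup \widetilde{\Omega}_k(\ell))$, \cite[Proposition 2.3]{koymanspagano2023} guarantees that $\fq$ is unramified in every pro-$\ell$ extension of $k$, hence contributes nothing to the relative discriminant outside $S_{\op{clean}}(\ell)$.

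\emph{Step 2: Tame local discriminant.} Since $S_{\op{clean}}(\ell)$ contains the primes of $k$ above $\ell$, every $\fq\in \Omega_k\setminus S_{\op{clean}}(\ell)$ has residue characteristic different from $\ell$, so ramification at $\fq$ in the $\ell$-extension $L/k$ is tame. When the inertia is cyclic of order $e$, the completion $L\otimes_k k_\fq$ decomposes as a product of $|G|/e$ copies of the unique totally and tamely ramified degree $e$ extension of $k_\fq$; each factor contributes $e-1$ to the discriminant valuation, so that
\[\op{ord}_\fq\op{Disc}(L/k)=\frac{|G|}{e}(e-1)=|G|\left(1-\frac{1}{e}\right).\]

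\emph{Step 3: Assembly.} Because $v\in \op{Prim}(\mathcal{S}_\ell^{G^\ast})(\op{solv.})$, the ideals $v_g(2)$ are squarefree, supported on $\widetilde{\Omega}_k(\ell)$, and pairwise coprime as $g$ varies over $G^\ast$. Combining Steps 1 and 2, each prime $\fq$ dividing some $v_g(2)$ contributes exactly $|G|(1-1/e(g))$ to $\op{ord}_\fq\op{Disc}(P_G(v))$ for the unique $g\in G^\ast$ it is attached to, while all other primes outside $S_{\op{clean}}(\ell)$ contribute $0$. Grouping contributions by $g$ and multiplying yields the claimed identity.

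The main obstacle will be Step 1: one must use the explicit recipe behind the Koymans--Pagano parameterization (built from the characters $\chi_\fq$ and the basis $J$ of $\op{ker}\Phi(\ell,S_{\op{clean}}(\ell))$) to pin down the inertia subgroup at each $\fq\mid v_g(2)$ \emph{up to conjugacy}, not merely its order. Steps 2 and 3 are then standard tame ramification theory and bookkeeping.
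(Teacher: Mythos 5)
The paper does not prove this statement at all; the ``proof'' in the text is a one-line citation to \cite[Proposition 2.10]{koymanspagano2023}. So your task is really to reconstruct Koymans--Pagano's argument, and your sketch does that correctly. The essential content, which you isolate as Step~1, is that in the parameterization $P_G$ a prime $\fq\notin S_{\op{clean}}(\ell)$ ramifies in the extension cut out by $P_G(v)$ precisely when $\fq\mid v_g(2)$ for (the unique, by primitivity) $g\in G^\ast$, and in that case the chosen tame inertia generator at $\fq$ maps to $g$; primes in $\widetilde{\Omega}_k(\ell)$ dividing no $v_g(2)$ must also be checked to be unramified, which is likewise built into the definition of $P_G$. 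Once that is in hand, Steps~2 and~3 are standard tame bookkeeping, as you say.

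One small imprecision in Step~2: $L\otimes_k k_\fq$ need not split into totally ramified pieces. With inertia order $e$ and residue degree $f$ at $\fq$, it is a product of $|G|/(ef)$ copies of a degree-$ef$ local field, each of whose differents has valuation $e-1$ and hence norm exponent $f(e-1)$; the total is $\bigl(|G|/(ef)\bigr)\cdot f(e-1)=|G|\left(1-\tfrac{1}{e}\right)$, so your final formula is correct even though your intermediate description tacitly assumes $f=1$. This does not affect the validity of the argument, but the wording should be fixed if you write it out in full.
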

\begin{proof}
The result above is \cite[Proposition 2.10]{koymanspagano2023}.
\end{proof}

\par Next, we consider the more general setting, where $G$ is a nilpotent group. By taking the product of all the maps $P_{G(\ell_j)}$, we get a map
\[P_G: \prod_{j\in [c]} \op{Prim}\left(\mathcal{S}_{\ell_j}^{G(\ell_j)-\op{id}}\right)\twoheadrightarrow \op{Epi}_{\op{top}-\op{gr}}\left(\op{G}_k, G\right)\cup \{\cdot\}.\]We set $S$ to be union $\bigcup_{j\in [c]} S_{\op{clean}}(\ell_j)$, and take $t_j$ to be the value of $t$ for the prime $\ell_j$. Define
\[\mathcal{S}:=\{0,1\}^{[t_1+\dots +t_c]}\times \mathcal{S}',\] where $\mathcal{S}'$ is the set of squarefree ideals supported outside $S$. Given a subset $\Delta\subset G^\ast$, let $\op{Prim}\left(\mathcal{S}^{\Delta}\right)$ be the set of all tuples $(v_g(1), v_g(2))_{g\in \Delta}$ satisfying the following properties
\begin{itemize}
    \item writing $\Pi_i: [\sum t_i]\rightarrow [t_i]$ for the natural projection map, then, $\Pi_i(v_g(1))$ are pairwise coprime,
    \item the $v_g(2)$ are pairwise coprime, 
    \item if $\p$ divides $v_g(2)$ and $\ell$ is a prime that divides the order of $g$, then, 
    \[\# \left(\cO_k/\p\right)\equiv 1\mod{\ell}. \]
\end{itemize}
\par There is a natural bijection 
\[\op{Prim}\left(\mathcal{S}^{G^\ast}\right)\xrightarrow{\sim} \prod_{j\in [c]} \op{Prim}\left(\mathcal{S}_{\ell_j}^{G(\ell_j)-\op{id}}\right), \] cf. \cite[p.326]{koymanspagano2023} for further details. Thus, in the general case we get a map 
\[P_G: \op{Prim}\left(\mathcal{S}^{G^\ast}\right)\twoheadrightarrow \op{Epi}_{\op{top}-\op{gr}}\left(\op{G}_k, G\right)\cup \{\cdot\}\] and set \[\op{Prim}\left(\mathcal{S}^{G^\ast}\right)(\op{solv.}):=P_G^{-1}\left(\op{Epi}_{\op{top}-\op{gr}}\left(\op{G}_k, G\right)\right).\] The next result allows us to read off the $S$-free discriminant of $P_G(v)$ for 
\begin{equation}\label{boring equation 1}v=(v_{g,j}(1), v_{g, j}(2))_{j\in [c], g\in G(\ell_j)-\op{id}}\in \op{Prim}\left(\mathcal{S}^{G^\ast}\right)(\op{solv.}).\end{equation}
\begin{proposition}\label{S clean discriminant main}
    For $v$ as above, we have that 
    \[\op{free}_S\left(\op{Disc}(P_G(v))\right)=\op{free}_S\left(\prod_{G^\ast} v_g(2)^{|G|\left(1-\frac{1}{\#\langle g\rangle}\right)}\right).\]
\end{proposition}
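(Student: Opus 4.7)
The plan is to reduce the general nilpotent case to the prime-power case already established in Proposition \ref{S free discriminant 1}, by decomposing $P_G$ over the Sylow factors and exploiting the pairwise coprimality of the $v_g(2)$. Writing $v$ via the identification $\op{Prim}(\mathcal{S}^{G^\ast}) \simeq \prod_{j\in[c]} \op{Prim}(\mathcal{S}_{\ell_j}^{G(\ell_j)-\op{id}})$ as $v=(v_j)_{j\in[c]}$, and using $P_G = \prod_j P_{G(\ell_j)}$, we see that $P_G(v)$ is the product of the epimorphisms $P_{G(\ell_j)}(v_j):\op{G}_k \twoheadrightarrow G(\ell_j)$. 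Hence the number field $K$ fixed by $\ker(P_G(v))$ is the compositum $K = K_1 \cdots K_c$, where $K_j/k$ is the $G(\ell_j)$-extension cut out by $P_{G(\ell_j)}(v_j)$; since the orders $|G(\ell_j)|$ are pairwise coprime, $[K:k]=|G|$ and $\op{Gal}(K/k) \cong \prod_j G(\ell_j)$ with $K_j \cap K_{j'}=k$ for $j\neq j'$.

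Next we compare valuations prime by prime at each $\fp \notin S$. By the pairwise coprimality of the ideals $v_g(2)$ (over all $g\in G^\ast$, not only within a single Sylow component), $\fp$ divides at most one of them. If $\fp$ divides none of the $v_g(2)$, Proposition \ref{S free discriminant 1} gives that every $K_j/k$ is unramified at $\fp$, so $K/k$ is unramified there and both sides of the claimed identity have valuation $0$ at $\fp$. Otherwise there is a unique pair $(j_0,g_0)$ with $g_0 \in G(\ell_{j_0})^\ast$ and $\fp \mid v_{g_0}(2)$, and the third defining condition of $\op{Prim}(\mathcal{S}^{G^\ast})$ forces $\#(\cO_k/\fp)\equiv 1 \pmod{\ell_{j_0}}$, so $\op{char}(k_\fp)\neq \ell_{j_0}$ and the ramification is tame. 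Applying Proposition \ref{S free discriminant 1} to $K_{j_0}$ shows that its inertia at $\fp$ is cyclic of order $\#\langle g_0\rangle$ (matching the local exponent $|G(\ell_{j_0})|(1-1/\#\langle g_0\rangle)$), while $K_j/k$ is unramified at $\fp$ for $j \neq j_0$.

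To finish, note that the inertia subgroup at a prime of $K$ over $\fp$ sits in $\op{Gal}(K/k) = \prod_j G(\ell_j)$ and projects to the inertia of each $K_j/k$ at $\fp$; by the previous step this projection is trivial for $j\neq j_0$, so the inertia lies in the $G(\ell_{j_0})$-factor and equals the inertia of $K_{j_0}$, cyclic of order $\#\langle g_0\rangle$. The standard tame discriminant formula then gives local exponent $|G|\bigl(1-1/\#\langle g_0\rangle\bigr)$ for $K/k$ at $\fp$, exactly matching the $\fp$-exponent of $\prod_{g \in G^\ast} v_g(2)^{|G|(1-1/\#\langle g\rangle)}$, where by coprimality only the $g=g_0$ term contributes. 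Aggregating over $\fp \notin S$ yields the asserted equality of $S$-free discriminants. The step I expect to require the most care is the inertia computation in the compositum at primes where several components might interact; the coprimality of the $v_g(2)$ across all of $G^\ast$ is precisely what rules this out and collapses the calculation to the single-Sylow case already handled by Proposition \ref{S free discriminant 1}.
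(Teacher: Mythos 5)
The paper's proof of this Proposition is a bare citation to \cite[Proposition 2.13]{koymanspagano2023}, so you are attempting to supply an argument where the paper does not. The overall strategy — reduce to the $\ell$-group statement via $P_G = \prod_j P_{G(\ell_j)}$, view $K$ as the compositum of the $K_j$, and compare exponents prime by prime — is the right one, and the first paragraph is sound. But the second and third paragraphs contain a genuine gap.

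The problem is the sentence ``there is a unique pair $(j_0,g_0)$ with $g_0 \in G(\ell_{j_0})^\ast$ and $\fp \mid v_{g_0}(2)$.'' The coprimality of the $v_g(2)$ over $g \in G^\ast$ gives you a unique $g_0 \in G^\ast$ with $\fp \mid v_{g_0}(2)$, but $G^\ast$ is strictly larger than $\bigcup_j G(\ell_j)^\ast$ whenever $c\ge 2$: the unique $g_0$ may well have nontrivial components $g_{0,j}\in G(\ell_j)$ in several Sylow factors simultaneously. (The third defining condition explicitly contemplates this: it imposes $\#(\cO_k/\fp)\equiv 1\pmod{\ell}$ for \emph{every} prime $\ell$ dividing $\ord(g_0)$.) In that case the prime $\fp$ gets distributed under the bijection $\op{Prim}(\mathcal{S}^{G^\ast}) \simeq \prod_j \op{Prim}(\mathcal{S}_{\ell_j}^{G(\ell_j)-\op{id}})$ to the $g_{0,j}$-component of $v_j$ for \emph{each} $j$ with $g_{0,j}\ne \op{id}$, so several $K_j$ are ramified at $\fp$, not just one. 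Your claim that ``the inertia lies in the $G(\ell_{j_0})$-factor and equals the inertia of $K_{j_0}$'' and your closing remark that the cross-$G^\ast$ coprimality ``collapses the calculation to the single-Sylow case'' are therefore not correct as stated; the coprimality ensures uniqueness of $g_0$, not that $g_0$ lives in a single Sylow subgroup.

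The conclusion nonetheless holds, and the fix is short. Write $g_0 = (g_{0,1},\dots,g_{0,c})$ for the Sylow decomposition of the unique $g_0$. By the third condition, $\#(\cO_k/\fp)\equiv 1\pmod{\ell_j}$ for every $j$ with $\ell_j\mid \ord(g_0)$, so $\fp$ lies above none of these $\ell_j$ and all of the relevant local extensions are tame. Applying Proposition~\ref{S free discriminant 1} to each $K_j$ shows $K_j/k$ has inertia at $\fp$ cyclic of order $\#\langle g_{0,j}\rangle$ (trivial when $g_{0,j}=\op{id}$). Since the $K_j$ are pairwise linearly disjoint and the orders $\#\langle g_{0,j}\rangle$ are pairwise coprime, the inertia of $K/k$ at $\fp$ is cyclic of order $\prod_j \#\langle g_{0,j}\rangle = \#\langle g_0\rangle$, and the tame formula then gives local exponent $|G|\bigl(1 - 1/\#\langle g_0\rangle\bigr)$ — matching the $\fp$-exponent of the right-hand side, where only the $g=g_0$ term contributes. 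So you should replace the assumption $g_0\in G(\ell_{j_0})^\ast$ by the full Sylow decomposition of $g_0$; without this the argument is incomplete even though the asserted identity is true.
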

\begin{proof}
    The result is \cite[Proposition 2.13]{koymanspagano2023} and is a direct consequence of Proposition \ref{S free discriminant 1}.
\end{proof}

 Next, we recall some useful estimates from \cite[section 4]{koymanspagano2023}. Let $F$ be a number field and $L$ be a finite abelian extension of $F$. Let $\mathfrak{S}$ be a subset of $\op{Gal}(L/F)$. Denote by $\mathcal{I}_F$ the multiplicative group of non-zero fractional ideals of $F$. For $I\in \mathcal{I}_F$, set $\omega_{\mathfrak{S}}(I)$ to denote the number of prime divisors $\p$ of $I$ such that $\p$ is unramified in $L$, and such that the Frobenius $\op{Frob}_{\p}$ belongs to $\mathfrak{S}$. Let $z$ be a complex number and $\mathcal{P}$ be a finite set of prime ideals of $\cO_F$. Consider the sum
\[A_z(x):=\sum_{\substack{|I|\leq x,\\ \p|I \Rightarrow \op{Frob}_{\p}\in \mathfrak{S}\text{ and }\p\notin \mathcal{P}}} \mu^2(I) z^{\omega_{\mathfrak{S}}(I)},\]

\begin{proposition}\label{}
    Let $F,L,\mathcal{P}$ ans $\mathfrak{S}$ be as above. Then for all positive real numbers $R$, and all $z\in \mathbb{C}$ such that $|z|\leq R$
\begin{equation}\label{koymans-pagano-4.1}
   A_z(x)= C x (\log x)^{\frac{z\#\mathfrak{S}}{\#\op{Gal}(L/F)}-1} + O_{R,F,L,\mathfrak{S},\mathcal{P}}\left(x (\log x)^{\frac{\op{Re}(z)\#\mathfrak{S}}{\#\op{Gal}(L/F)}-2}\right), 
\end{equation}
where $C > 0$ depends only on $z, F, L, \mathfrak{S}, \mathcal{P}$.
\end{proposition}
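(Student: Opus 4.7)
The plan is to prove this asymptotic by the Selberg--Delange method, applied to the Dirichlet series attached to the multiplicative function counted by $A_{z}(x)$. First I would introduce
\[ F(s,z) \;=\; \sum_{I} \frac{\mu^{2}(I)\, z^{\omega_{\mathfrak{S}}(I)}}{|I|^{s}} \;=\; \prod_{\substack{\p\notin\mathcal{P} \\ \op{Frob}_{\p}\in\mathfrak{S}}}\!\left(1+\frac{z}{|\p|^{s}}\right), \]
where the sum is over squarefree integral ideals $I$ of $\mathcal{O}_{F}$ unramified in $L/F$, supported outside $\mathcal{P}$, and with every prime divisor having Frobenius in $\mathfrak{S}$. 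The product converges absolutely on $\op{Re}(s)>1$, and the task is to recover the partial sum $A_{z}(x)$ from the analytic behaviour of $F(s,z)$ near $s=1$.

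The next step is to isolate the singularity at $s=1$. Write $\alpha := z\,\#\mathfrak{S}/\#\op{Gal}(L/F)$. Orthogonality of characters of $\op{Gal}(L/F)$ together with the non-vanishing of Hecke $L$-functions on $\op{Re}(s)\ge 1$ yields
\[ \sum_{\op{Frob}_{\p}\in\mathfrak{S}} |\p|^{-s} \;=\; \frac{\#\mathfrak{S}}{\#\op{Gal}(L/F)}\,\log\!\frac{1}{s-1} \;+\; H(s), \]
with $H$ holomorphic on some half-plane $\op{Re}(s) \ge 1-\delta$. Expanding $\log(1 + z|\p|^{-s})$ in powers of $z|\p|^{-s}$, the tail of order $\ge 2$ converges absolutely on $\op{Re}(s)>1/2$ and is uniformly bounded for $|z|\le R$ on any fixed compact neighbourhood of $s=1$. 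Exponentiating produces the key factorisation
\[ F(s,z) \;=\; \zeta_{F}(s)^{\alpha}\, G(s,z), \]
in which $G(s,z)$ is holomorphic in $s$ on a neighbourhood of $\{\op{Re}(s)\ge 1\}$ and is uniformly bounded there by a quantity depending only on $R, F, L, \mathfrak{S}, \mathcal{P}$; the finitely many primes in $\mathcal{P}$ and the primes ramified in $L/F$ contribute only a finite Euler factor, absorbed into $G$.

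Finally I would invoke a standard Tauberian theorem of Landau--Selberg--Delange type (cf.\ Tenenbaum's \emph{Introduction to Analytic and Probabilistic Number Theory}, Theorem II.5.2, extended from $\Z$ to integral ideals of $F$; a closely related version is used in \cite{koymanspagano2023}). The factorisation $F(s,z)=\zeta_{F}(s)^{\alpha}G(s,z)$ feeds into a truncated Perron integral, shifted across the standard zero-free region for $\zeta_{F}$, to produce
\[ A_{z}(x) \;=\; \frac{G(1,z)}{\Gamma(\alpha)}\,x\,(\log x)^{\alpha-1} + O_{R,F,L,\mathfrak{S},\mathcal{P}}\!\left(x(\log x)^{\op{Re}(\alpha)-2}\right), \]
which is the claimed estimate with $C := G(1,z)/\Gamma(\alpha)$. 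The principal obstacle I anticipate is securing uniformity in $z$ for $|z|\le R$ of both the bound on $G$ on a compact neighbourhood of $s=1$ and the implied constant in the Perron contour shift; this is standard within the Selberg--Delange framework but requires care with effective bounds on $\zeta_{F}$ and $1/\zeta_{F}$ in the zero-free region, so I would quote the Tauberian statement directly rather than re-derive the contour argument.
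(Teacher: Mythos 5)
The paper's own ``proof'' of this proposition is a one-line citation to \cite{koymanspagano2023}, Theorem 4.1, so there is no internal argument to compare against; you are supplying a proof sketch where the authors simply quote the result. Your Selberg--Delange outline is the standard route to such an estimate and is almost certainly what the cited reference does: form the Euler product $F(s,z)=\prod\bigl(1+z|\p|^{-s}\bigr)$ over primes $\p\notin\mathcal{P}$ unramified in $L$ with $\op{Frob}_{\p}\in\mathfrak{S}$, use Chebotarev and the non-vanishing of Hecke $L$-functions to extract a factor $\zeta_F(s)^{\alpha}$ with $\alpha=z\#\mathfrak{S}/\#\op{Gal}(L/F)$, and invoke a Landau--Selberg--Delange Tauberian theorem over the number field $F$. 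The factorization, the uniform bound on the holomorphic factor $G(s,z)$ for $|z|\le R$, and the shapes of the main and error terms you write down are all correct. Two small remarks. First, the phrase ``truncated Perron integral, shifted across the standard zero-free region'' understates what is needed: $\zeta_F(s)^{\alpha}$ has a branch singularity at $s=1$, not a pole, so the contour must be a Hankel-type loop around the branch cut; this is precisely what the quoted Tauberian theorem packages, so deferring to it is the right move. Second, your constant $C=G(1,z)/\Gamma(\alpha)$ need not be positive, or even nonzero, for arbitrary complex $z$ with $|z|\le R$ (it vanishes when $\alpha$ is a non-positive integer, e.g.\ at $z=0$); the clause ``$C>0$'' in the proposition should be read in light of the paper's application, where $z=\#I(G)$ is a positive integer and hence $\alpha>0$.
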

\begin{proof}
     For the proof of the above estimate we refer to \cite{koymanspagano2023}[Theorem 4.1].
\end{proof}

\subsection{Uniformity estimate for nilpotent groups}
\par Let $G$ be a  non-trivial finite nilpotent group. Given an integral ideal $\fq$ of $\cO_k$, set $N_q(G;X)$ to be the number of Galois extensions $L/k$ with $\op{Gal}(L/k)= G$, $\disc(L)\leq X$, and such that $\fq$ divides $\rdisc(L/k)$. We recall that $\ell_G$ denotes the smallest prime divisor of $|G|$ and $I(G)$ is the subset of elements of $G$ with order $\ell_G$. Set $H(G):= I(G) \cup \{\op{id}\}$. We establish an extension a uniformity estimate for nilpotent groups. This generalizes \cite{Wang2020}[Theorem 4.12]. Recall from \cite{koymanspagano2023} that \[i(G, k) = \frac{\#I(G)}{[k(\mu_{\ell_G}):k]}.\]
\begin{proposition}\label{loc-uni-est}
    Let $G$ be a non-trivial finite nilpotent group and $k$ be a number field. Let $\fq$ be a non-zero ideal of $\cO_k$, and recall that $|\fq|$ denotes the absolute norm to $\Q$. Then, for all $\epsilon>0$, we have the following asymptotic estimate
    \[
    N_q(G;X) \ll_{G,k,\epsilon} \left(\frac{X}{|\fq|^{1-\epsilon}}\right)^{a(G)}(\log X)^{i(G,k)-1}.
    \]
    
\end{proposition}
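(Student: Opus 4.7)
The strategy is to combine the Koymans--Pagano parameterization $P_G$ with a Dirichlet hyperbola argument powered by the density estimate \eqref{koymans-pagano-4.1}, adapted to track the extra constraint $\fq\mid\rdisc(L/k)$.

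First I invoke Proposition \ref{S clean discriminant main}, together with the facts that the fibres of $P_G$ have size $O_G(1)$ and that the $S$-part of the discriminant (for $S=\bigcup_j S_{\op{clean}}(\ell_j)$ finite) contributes only a constant depending on $(G,k)$. Setting $\alpha_g:=|G|(1-1/\#\langle g\rangle)$, this yields
\[
N_\fq(G;X)\ \ll_{G,k}\ \#\left\{v \in \op{Prim}\left(\mathcal{S}^{G^*}\right)(\op{solv.}) : \fq_S \,\Big|\, \prod_{g\in G^*} v_g(2),\ \prod_{g\in G^*} |v_g(2)|^{\alpha_g}\leq X\right\},
\]
where $\fq_S$ is the prime-to-$S$ part of $\fq$ and the discrete component $v_g(1)\in \{0,1\}^{t_1+\cdots+t_c}$ is absorbed into the $O_G(1)$ factor. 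The key structural observation is that $\alpha_g = 1/a(G)$ precisely when $g\in I(G)$, while $\alpha_g > 1/a(G)$ otherwise.

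Next I distribute $\fq_S$ among the coordinates. Since the $v_g(2)$ are pairwise coprime, every admissible configuration corresponds to a unique factorisation $\fq_S=\prod_{g\in G^*}\fq_g$ with $\fq_g\mid v_g(2)$, and there are at most $(\#G^*)^{\omega(\fq)}\ll_{G,\epsilon}|\fq|^\epsilon$ of these. Writing $v_g(2)=\fq_g w_g$ with $w_g$ squarefree, coprime to $\fq_S$ and pairwise coprime, and preserving the Frobenius constraint that every prime $\p\mid w_g$ satisfies $\#(\cO_k/\p)\equiv 1\pmod\ell$ for each prime $\ell\mid\op{ord}(g)$, the size bound becomes $\prod_{g\in G^*}|w_g|^{\alpha_g}\leq X/\prod_{g\in G^*}|\fq_g|^{\alpha_g}$. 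The heart of the proof is then a Dirichlet hyperbola/Perron argument on this multi-variable sieve, where \eqref{koymans-pagano-4.1} supplies the coordinate-wise count $\ll y_g(\log y_g)^{\delta_g-1}$ of admissible $w_g$ of norm at most $y_g$, with $\delta_g$ the appropriate Chebotarev density. Because $\alpha_g$ is strictly minimised on $g\in I(G)$, the variables $w_g$ with $g\notin I(G)\cup\{1\}$ are confined to a bounded range and contribute only $O_G(1)$, while the free variables run over $g\in I(G)$ and produce a combined log exponent $\sum_{g\in I(G)}1/[k(\mu_{\ell_G}):k]-1 = i(G,k)-1$. This yields $\#\{(w_g)_g:\prod_g|w_g|^{\alpha_g}\leq Y\} \ll_{G,k} Y^{a(G)}(\log Y)^{i(G,k)-1}$.

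Applying this with $Y=X/\prod_g|\fq_g|^{\alpha_g}$, summing over the $O_{G,\epsilon}(|\fq|^\epsilon)$ factorisations, and using $\prod_g|\fq_g|^{\alpha_g}\geq|\fq_S|^{1/a(G)}\gg_{G,k}|\fq|^{1/a(G)}$ (since each $\alpha_g\geq 1/a(G)$), I obtain
\[
N_\fq(G;X) \ll_{G,k,\epsilon} |\fq|^\epsilon\cdot X^{a(G)}|\fq|^{-1}(\log X)^{i(G,k)-1}\leq \left(\frac{X}{|\fq|^{1-\epsilon}}\right)^{a(G)}(\log X)^{i(G,k)-1},
\]
the final inequality holding because $a(G)\leq 1$ after a harmless rescaling of $\epsilon$. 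The principal obstacle will be executing the hyperbola step cleanly: one must handle elements of composite or higher prime-power order (such as order $\ell_G^2$ inside $G(\ell_G)$, or mixed orders across distinct Sylow factors) whose Frobenius conditions partially overlap with those of $I(G)$, without inflating the log exponent beyond $i(G,k)-1$. This is the same delicate point resolved in \cite{koymanspagano2023}, and the new element here is maintaining uniformity in $\fq$.
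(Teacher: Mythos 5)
Your proposal follows the same architecture as the paper's proof: invoke the Koymans--Pagano parameterization and Proposition \ref{S clean discriminant main} to reduce to counting tuples $v=(v_g(1),v_g(2))$, distribute the ideal $\fq$ among the coordinates indexed by $G^*$, use the Selberg--Delange bound \eqref{koymans-pagano-4.1} to handle the coordinates $g\in I(G)$ (which alone produce the main term, since $\alpha_g=1/a(G)$ exactly there), treat the remaining coordinates as contributing $O_G(1)$, and absorb the $O_\epsilon(|\fq|^\epsilon)$ combinatorial factor coming from the choice of factorization. This is exactly what the paper does, with the paper first splitting $\fq=\fq_1\fq_2$ between $G-H(G)$ and $I(G)$ and then subdividing $\fq_1$.

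Two points need correction. First, the divisibility constraint you extract from Proposition \ref{S clean discriminant main} is $\fq_S\mid\prod_{g\in G^*}v_g(2)^{\alpha_g}$, not $\fq_S\mid\prod_{g\in G^*}v_g(2)$. Since each $v_g(2)$ is squarefree, your stated condition cannot even be met when $\fq_S$ has a squared prime factor, and then the set you bound is strictly smaller than the one you need, so the inequality $N_\fq(G;X)\ll\#\{\cdots\}$ fails in that direction. The paper handles a general (non-squarefree) $\fq$ by working with the ideals $\widetilde{\mathfrak{a}}_i=\prod_j\fp_{i_j}^{\lceil a_{i_j}/e_i\rceil}$ for the $G-H(G)$ part and with the radical $\widetilde{\fq}_2$ together with the inequality $|\fq_2|\le|\widetilde{\fq}_2|^{1/a(G)}$ for the $I(G)$ part, so as to preserve the full power $|\fq|^{-a(G)}$ in the final bound. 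Second, your claim that the variables $w_g$ with $g\notin I(G)\cup\{1\}$ are ``confined to a bounded range'' is not what happens; these coordinates range freely, and the reason they contribute $O_G(1)$ is that $e_ga(G)>1$ for such $g$, so the weighted sum $\sum_{w_g}|w_g|^{-e_ga(G)}$ converges to $\zeta_k(e_ga(G))$. That convergence, combined with pulling out the $\fq_1$-part via the $\widetilde{\mathfrak{a}}_i$, is precisely how the paper gets the $|\fq_1|^{-a(G)}$ factor; your hyperbola step should be replaced (or made precise) along these lines.
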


\begin{proof}
     Without loss of generality, we may assume that $\fq$ is not divisible by any prime in $S$. For $g\in G$, set $e_g := |G|\left(1-\frac{1}{\#\langle g \rangle}\right)$. Observe that for $g\in I(G)$, we have that $e_g=a(G)^{-1}$. It follows from the parameterization in the previous subsection and Proposition \ref{S clean discriminant main} that it suffices to bound the following
    \begin{equation}\label{begin-set}
        \#\left\{v=(v_{g}(1),v_{g}(2))\in \op{Prim}\left(\mathcal{S}^{G^\ast}\right):  \fq \mid \Delta_v \text{ and } |\Delta_v| \leq X \right\},
    \end{equation}
    where \[\Delta_v := \prod_{g \in G-\{\op{id}\}}v_g(2)^{e_g}.\]
     Suppose $(\fq_1, \fq_2)$ is a pair of integral ideals such that $\fq = \fq_1 \fq_2$. We can write 
    \[\begin{split}& v':=\pi_{G-H(G)}(v):=\prod_{g\in G-H(G)} (v_g(1), v_g(2));\\
    & v'':=\pi_{I(G)}(v):=\prod_{h\in I(G)} (v_h(1), v_h(2)).
    \end{split}\]The elements $v'$ and $v''$ belong to $\op{Prim}\left(\mathcal{S}^{G-H(G)}\right)$ and $\op{Prim}\left(\mathcal{S}^{I(G)}\right)$ respectively. We set $N_{\fq_2}(v',X)$ to denote the number of $v''=\prod_{h\in I(G)}(v_h(1), v_h(2))\in \op{Prim}\left(\mathcal{S}^{I(G)}\right)$ such that 
    \begin{itemize}
        \item $ \fq_2 \mid \prod_{h \in I(G)}v_h(2)^{a(G)^{-1}}$,
        \item $|\prod_{h \in I(G)}v_h(2)^{a(G)^{-1}}|\leq \left(\frac{X}{|\prod_{g \in G-H(G)}v_g(2)^{e_g}|}\right)$.
    \end{itemize}
Let $\mathcal{S}\left(\widetilde{\Omega}_K(\ell_G)\right) $ be the set of all squarefree ideals $\mathcal{I}$ supported in $\widetilde{\Omega}_K(\ell_G)$. We note that for $h\in I(G)$, the only prime that divides the order of $h$ is $\ell_G$. All primes $\fp$ that divide $v_h(2)$ must satisfy the congruence
\[\# \left(\cO_K/\fp\right)\equiv 1\mod{\ell_G}. \] Therefore for $h\in I(G)$, the ideal $v_h(2)$ belongs to $\mathcal{S}\left(\widetilde{\Omega}_K(\ell_G)\right)$.
\par Since we prove an upper bound, we may as well neglect the coprimality conditions between $(v_g(1), v_g(2))$ for $g\in G-H(G)$. It suffices to bound the following sum
    \begin{equation}\label{loc}
        \sum_{\substack{v',|\prod_{g \in G-H(G)}v_g(2)^{e_g}|\leq X\\ \fq_1 \mid \prod_{g \in G-H(G)}v_g(2)^{e_g}}}N_{\fq_2}(v',X).
    \end{equation}
    Set $\widetilde{\fq}_2:= \prod_{\fp \mid \fq_2} \fp$ to be the squarefree part of $\fq_2$ and write $\widetilde{\fq}_2 I_2=\prod_{h\in I(G)}v_h(2)$. For $h\in I(G)$, recall that the ideals $v_h(2)$ are all squarefree and mutually coprime. Furthermore, they are supported on $\widetilde{\Omega}_K(\ell_G)$. Therefore, $\widetilde{\fq}_2 I_2$ is squarefree and supported in $\widetilde{\Omega}_K(\ell_G)$. It is easy to see that
    \begin{equation}\label{upper bound N_q}
    N_{\fq_2}(v',X)\leq \sum_{\substack{I_2\\ |\fq_2\left(I_2\right)^{a(G)^{-1}}| \leq \frac{X}{|\prod_{g \in G-H(G)}v_g(2)^{e_g}|},\\ \widetilde{\fq}_2I_2 \text{ supported in }\widetilde{\Omega}_K(\ell_G)}} \mu^2(I_2)\#I(G)^{\omega(\widetilde{\fq}_2)\omega(I_2)},
    \end{equation}
    cf. the proof of \cite{koymanspagano2023}[Theorem 5.1] for further details.

    \par Set $L$ to be the cyclotomic extension $k(\mu_{\ell_G})$ and $\mathfrak{S}=\{1\}\subset \op{Gal}(L/k)$. Note that an ideal $I$ is supported on $\widetilde{\Omega}_K(\ell_G)$ if and only if all prime ideals $\fp|I$ split in $L$. Setting $z:=\# I(G)$, rewrite \eqref{upper bound N_q} as
    \[N_{\fq_2}(v',X)\leq z^{\omega(\fq_2)} A_z\left(Y\right),\]
    where 
    \[Y:=\left(\frac{X^{a(G)}}{|\fq_2|^{a(G)}|\prod_{g \in G-H(G)}v_g(2)^{e_g a(G)}|}\right).\]
    Using the estimate \eqref{koymans-pagano-4.1} we obtain the following bound
\[ N_{\fq_2}(v',X) \ll_{G,k,\epsilon} Y (\log Y)^{\frac{z\#\mathfrak{S}}{[k(\mu_{\ell_G}):k]}-1}.
\]
Noting that \[i(G,k)=\frac{\#I(G)}{[k(\mu_{\ell_G}):k]}=\frac{z\# \mathfrak{S}}{[k(\mu_{\ell_G}):k]},\]
and that 
\[\log(Y)\sim a(G) \log(X),\]we obtain the following upper bound
    
    \begin{equation}\label{N q_2 final bound}
   N_{\fq_2}(v',X) \ll_{G,k,\epsilon} \left(\frac{1}{|\fq_2|}\right)^{a(G)}\frac{X^{a(G)}}{|\prod_{g \in G-H(G)}v_g(2)^{e_g a(G)}|}(\log X)^{i(G,K)-1}.
    \end{equation}

    \par For any $\epsilon>0$, the integral ideal $\fq$ can be written as the product of two proper ideals in at most $O_{\epsilon}(|\fq|^{\epsilon})$ ways.
    From \eqref{N q_2 final bound}, we obtain
    \begin{equation}\label{important bound}\begin{split} & \sum_{\substack{v',|\prod_{g \in G-H(G)}v_g(2)^{e_g}|\leq X\\ \fq_1 \mid \prod_{g \in G-H(G)}v_g(2)^{e_g}}}N_{\fq_2}(v',X) \\
    \ll_{G,k,\epsilon} & |\fq|^{\epsilon/2}\op{max}_{(\fq_1, \fq_2)}\left(\left(\frac{X}{\fq_2}\right)^{a(G)}(\log X)^{i(G,k)-1} \sum_{\substack{v'\\ |\prod_{g \in G-H(G)}v_g(2)^{e_g}|\leq X,\\ \fq_1 \mid \prod_{g \in G-H(G)}v_g(2)^{e_g}}} \frac{1}{|\prod_{g \in G-H(G)}v_g(2)^{e_g a(G)}|}\right).
    \end{split}
    \end{equation}
    Observe that $e_g a(G) > 1$ for $g \in G-H(G)$. Setting $t:=|G|-\#H(G)$, enumerate the elements of $G-H(G)$ as $\{g_1, \dots, g_t\}$. Setting, $e_i:=e(g_i)$, we find that
    \[\sum_{\substack{v'\\ |\prod_{g \in G-H(G)}v_g(2)^{e_g}|\leq X,\\ \fq_1 \mid \prod_{g \in G-H(G)}v_g(2)^{e_g}}} \frac{1}{|\prod_{g \in G-H(G)}v_g(2)^{e_g a(G)}|}\leq \sum_{\substack{I_1, \dots, I_{t}\\ |\prod_i I_i^{e_i}|\leq X\\ \fq_1| \prod_i I_i^{e_i}}} \frac{1}{|\prod_i I_i^{e_i a(G)}|},\]
    where $I_1, \dots, I_t$ are coprime integral ideals. Since $g_i\notin I(G)$, we find that $e_i a(G)>1$. We write $I_i= \fq_{1,i} J_i$, where $J_i$ is coprime to $\fq_1$, and $\fq_1$ divides $\prod_i \fq_{1,i}^{e_i}$. Then, the product $\prod_i I_i ^{e_i a(G)}$ can be written as
    \[\prod_i I_i ^{e_i a(G)}=\prod_i \fq_{1,i} ^{e_i a(G)}\prod_i J_i ^{e_i a(G)}.\]
    Note that the sum 
    \[\sum_{\substack{I_1, \dots, I_{t}\\  \fq_1| \prod_i I_i^{e_i}}} \frac{1}{|\prod_i I_i^{e_i a(G)}|}\] converges and in fact, is bounded above by 
    \[\prod_{i=1}^t\sum_{I} \frac{1}{|I|^{e_i a(G)}}=\prod_{i=1}^t \zeta_{K}(e_i a(G)).\] Let $\fp_1, \dots, \fp_k$ be the prime divisors of $\fq_1$, which factorizes as $\fq_1= \fp_1^{a_1}\dots \fp_k^{a_k}$. Let $\mathfrak{T}$ be the set of all tuples 
    \[\mathfrak{T}=\{(\mathfrak{a}_1, \dots, \mathfrak{a}_t)\mid \mathfrak{a}_i\text{ are coprime, }\prod_i \mathfrak{a}_i=\fp_1\dots \fp_k\}.\]
     Writing $\mathfrak{a}_i=\fp_{i_1}\dots \fp_{i_r}$, set \[\widetilde{\mathfrak{a}}_i:=\prod_{j=1}^r \fp_{i_j}^{\lceil a_{i_j}/e_i\rceil}.\] 
    We write
    \[\begin{split}& \sum_{\substack{I_1, \dots, I_{t}\\  \fq_1| \prod_i I_i^{e_i}}} \frac{1}{|\prod_i I_i^{e_i a(G)}|} \\
    = & \sum_{(\mathfrak{a}_1, \dots, \mathfrak{a}_k)\in \mathfrak{T}}\prod_{i=1}^t \left(\sum_{\substack{I_i \\ \widetilde{\mathfrak{a}}_i|I_i}}\frac{1}{| I_i|^{e_i a(G)}}\right)\\ =& \sum_{(\mathfrak{a}_1, \dots, \mathfrak{a}_k)\in \mathfrak{T}}\prod_{i=1}^t \left(\sum_{J_i}\frac{1}{|\widetilde{\mathfrak{a}}_i|^{e_i a(G)} |J_i|^{e_i a(G)}}\right).\end{split}\]
   We note that $\prod_{i=1}^t \widetilde{\mathfrak{a}_i}^{e_i}$ is divisible by $\fq_1$, and therefore, 
   \[\sum_{\substack{I_1, \dots, I_{t}\\  \fq_1| \prod_i I_i^{e_i}}} \frac{1}{|\prod_i I_i^{e_i a(G)}|} \leq \left(\frac{\# \mathfrak{T}}{|\fq_1|^{a(G)}}\right)\prod_{i=1}^t \left(\sum_{J_i}\frac{1}{|J_i|^{e_i a(G)}}\right)=\left(\frac{\# \mathfrak{T}}{|\fq_1|^{a(G)}}\right)\prod_{i=1}^t \zeta_K(e_i a(G)).\]
   We note that $\#\mathfrak{T}=t^{\omega(\fq_1)}=o(|\fq|^{\epsilon/2})$. 
    Combining the above estimates with \eqref{important bound}, we thus obtain
    \[
    \sum_{\substack{v',|\prod_{g \in G-H(G)}v_g(2)^{e_g}|\leq X\\ \fq_1 \mid \prod_{g \in G-H(G)}v_g(2)^{e_g}}}N_{\fq_2}(v',X) \\
    \ll_{G,K,\epsilon} |\fq|^{\epsilon}\left(\frac{X}{|\fq|}\right)^{a(G)}(\log X)^{i(G,K)-1}.
    \]
    This concludes the proof of the result.
\end{proof}

\section{Proof of the main theorem \ref{main theorem}}\label{s 4}
In this section, $G$ denotes a non-trivial finite nilpotent group. The results in this section are analogous to those in \cite[Section 5]{Wang2020}. We also suppose that $G$ satisfies the appropriate conditions of Theorem \ref{main theorem} for $S_3, S_4,$ or $S_5$. We set 
\[
\begin{split} \mathcal{F}_{k}(S_n\times G; X):=\{ & (K,L) : [K:k]= n, [L:k] = |G|, \op{Gal}(K/k) \simeq S_n, \\ &  \op{Gal}(L/k) \simeq G \text{ and } \disc(KL) \leq X\}
\end{split}
\]
and prove asymptotics for $\# \mathcal{F}_{k}(S_n\times G; X)$ as $X\rightarrow \infty$. We shall prove that the same asymptotics hold for $N_k(S_n\times G; X)$. We first establish a lower bound.
\begin{theorem}\label{lower bound theorem}
    Let $G$ and $k$ be as in Theorem \ref{main theorem}, then, 
    \[  N_{k}(S_n\times G; X) \gg_{G, k} X^{\frac{1}{|G|}}.\]
\end{theorem}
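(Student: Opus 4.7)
The strategy is to realize each $S_n\times G$-extension of $k$ as a compositum $F=KL$, where $K/k$ is an $S_n$-extension of degree $n$ and $L/k$ is a $G$-extension, and to invoke the product lower bound of Corollary \ref{3.3}. Standard Galois theory applied to the two normal factors of $\op{Gal}(\widetilde{F}/k)\simeq S_n\times G$ produces from any such $F$ a unique pair $(K,L)$ with $\widetilde{K}\cap L=k$ and $F=KL$; conversely any pair $(K,L)$ with $\widetilde{K}\cap L=k$ arises this way.

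The conceptual crux is to verify that the disjointness condition $\widetilde{K}\cap L=k$ is automatic under the hypotheses of Theorem \ref{main theorem}. Indeed, $M:=\widetilde{K}\cap L$ is Galois over $k$ and $\op{Gal}(M/k)$ must be a common quotient of $S_n$ and $G$. For $n\in\{3,4,5\}$ the nontrivial quotients of $S_n$ are $\mathbb{Z}/2\mathbb{Z}$ together with one or two non-abelian groups among $\{S_3,S_4,S_5\}$; since $G$ is nilpotent it admits no such non-abelian quotient, and since the parity hypotheses of Theorem \ref{main theorem} force $|G|$ to be odd it has no quotient of order two either. Hence $M=k$, so $N_k(S_n\times G;X)$ equals the number of pairs $(K,L)$ satisfying the appropriate Galois conditions and $\disc(KL)\le X$. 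Proposition \ref{disc_bound} then gives $\disc(KL)\le\disc(K)^{|G|}\disc(L)^n$, so it suffices to lower-bound the number of pairs with $\disc(K)^{|G|}\disc(L)^n\le X$.

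This final lower bound follows from Corollary \ref{3.3} applied with $\mathcal{P}_1$ the set of degree-$n$ $S_n$-extensions of $k$, $\mathcal{P}_2$ the set of $G$-extensions of $k$, $\op{inv}_1=\op{inv}_2=\disc$, $a=|G|$, $b=n$, and $\alpha=a(G)$. The input bound $\#\{K\in\mathcal{P}_1:\disc(K)\le Y\}\gg_{k,S_n} Y$ is due to Davenport--Heilbronn \cite{davenport1971density} for $n=3$ and to Bhargava--Shankar--Wang \cite{bhargava2015geometryofnumbers} for $n\in\{4,5\}$; the input bound $\#\{L\in\mathcal{P}_2:\disc(L)\le Y\}\gg_{k,G} Y^{a(G)}$ is the weak Malle lower bound for nilpotent groups due to Kl\"uners--Malle \cite{klunersmalle}. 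The admissibility condition $n-|G|\cdot a(G)>0$ simplifies to $n(\ell_G-1)>\ell_G$, which holds throughout Theorem \ref{main theorem} since $n\ge 3$ and $\ell_G\ge 3$. Corollary \ref{3.3} then yields $N_k(S_n\times G;X)\gg_{k,S_n,G} X^{1/|G|}$. Without the parity hypotheses of Theorem \ref{main theorem} a sieve-style correction would be needed to discard pairs with $\widetilde{K}\cap L\neq k$, and I would expect this to be the only real technical obstacle in the generalization.
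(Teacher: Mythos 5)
Your proof is correct and follows essentially the same route as the paper: combine Proposition \ref{disc_bound} with the known lower bounds $N_k(S_n;Y)\gg Y$ and $N_k(G;Y)\gg Y^{a(G)}$ via Corollary \ref{3.3}. The only difference is that you spell out explicitly why $\widetilde{K}\cap L=k$ holds automatically under the parity hypotheses (so that pairs $(K,L)$ biject with $S_n\times G$-extensions), a step the paper leaves implicit; your group-theoretic justification of this point is correct.
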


\begin{proof}
Using the upper bound on $\disc(KL)$ from Proposition \ref{disc_bound}, we note that
\begin{align*}
      N_k(S_n\times G; X) \geq & \#\{(K,L) : [K:k]= n, [L:k] = |G|, \op{Gal}(\widetilde{K}/k) \simeq S_n \text{ and }\\
      & \op{Gal}(L/k) \simeq G, \disc(K)^{|G|}\disc(L)^n \leq X\}.
\end{align*}    
Malle's conjecture is known for $S_n$ ($n\in \{3,4,5\})$ \cite{davenport1971density, bhargava2005density, bhargava2010density}. For nilpotent groups, the weak form of the conjecture is known in generality, cf. \cite{klunersmalle}. Thus, we find that
\[\begin{split}
    & N_k(S_n; X)\gg_{n, k} X, \text{ and } \\
    & N_k(G; X) \gg_{G, k} X^{a(G)}.
\end{split}\]
The result then follows from the above bounds and Corollary \ref{3.3} setting $\op{inv}_1(K):=\disc(K)$ and $\op{inv}_1(L):=\disc(L)$.
\end{proof}
\par Next, we examine a set of invariants denoted by $\{\disc_Y\}_{Y>0}$ for $S_n\times G$-extensions, defined below. \begin{definition}\label{def_Y}
    Let $Y > 0$, and suppose $(K,L) \in \mathcal{F}_{k}(S_n\times G; X)$ we define 
    \[
    \disc_{Y}(KL) := \prod_{\fp \in \Omega_k}\disc_{Y, \fp}(KL),
    \]
 where
    \[
\disc_{Y, \fp}(KL) :=
\begin{cases}
    \disc_{\fp}(KL)\text{ }\mathrm{ if }\text{ }|\fp| \leq Y,\\
    \disc_{\fp}(K)^{|G|}\disc_{\fp}(L)^n \text{ } \mathrm{ if } \text{ }|\fp| > Y.
\end{cases}
\]
We set 
\begin{equation}\label{N k y defn}
    \begin{split}N_{k, Y}(S_n\times G; X):= & \# \{(K,L)\mid[K:k]=n, [L:k]=|G|,\\
& \op{Gal}(\widetilde{K}/k)\simeq S_n, \op{Gal}(L/k)\simeq G\text{ and }\disc_Y(KL) < X\}.\end{split}
\end{equation}
\end{definition}
\begin{remark}
    We note here that a similar definition arises in Wang's work. The above definition generalizes \cite[equation (5.4)]{Wang2020}.
\end{remark}

We remark that $\disc(KL) \leq \disc_Y(KL)$ for all $Y>0$ using Proposition \ref{disc_bound}. Hence 
\[N_{k, Y}(S_n\times G; X) \leq N_k(S_n\times G; X)\]
for all $Y> 0$. The invariants $\disc_Y$ approximate $\disc$ as $Y \to \infty$. The counting function associated with $\disc_Y$ is denoted as $N_{k, Y}(S_n\times G; X)$. We establish asymptotics for $N_{k, Y}(S_n\times G; X)$ for $Y > 0$ and leverage these results to derive the intended asymptotics for $N_k(S_n\times G; X)$.
 
 \par Let $S_Y$ be the finite set of all primes of $k$ with absolute norm $|\fp|\leq Y$. Let $\Sigma_Y$ denote the set of tuples $(\mathcal{K}_{\fp}, \mathcal{L}_{\fp})_{\fp \in S_Y}$ of pairs of local étale extensions over $k_{\fp}$ of degree $n$ and $|G|$ respectively. We will think of these tuples as specifying local conditions at all $\fp \in S_Y$ for $S_n$ and $G$-extensions. This argument is indeed an analogue of \cite[p.116, l. -12]{Wang2020}. Accordingly, given a pair $(K,L) \in \mathcal{F}_{k}(S_n\times G; X)$ and a tuple $(\mathcal{K}_{\fp}, \mathcal{L}_{\fp})_{\fp \in S_Y} \in \Sigma_Y$ we write $(K,L) \in ((\mathcal{K}_{\fp}, \mathcal{L}_{\fp}))_{\fp \in S_Y}$, if for each $\fp \in S_Y$,
\begin{enumerate}
    \item the local étale algebra $(K)_{\fp} = \mathcal{K}_{\fp}$, and
    \item the local étale algebra $(L)_{\fp} = \mathcal{L}_{\fp}$.
\end{enumerate}
Next, we have the following Lemma relating $\disc_Y$ and $\disc$ under given local specifications.
\begin{Lemma}\label{pi_ind}
    Let $Y > 0$ and $\Pi :=((\mathcal{K}_{\fp}), \mathcal{L}_{\fp}))_{\fp \in S_Y} \in \Sigma_Y$. If $(K,L) \in \Pi$ then,
    \[
    \disc_Y(KL):=\frac{\disc(K)^{|G|}\disc(L)^n}{d_{\Pi}}
    \]
    where $d_{\Pi}$ depends only on $\Pi$.
\end{Lemma}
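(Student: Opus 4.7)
The proof is essentially a bookkeeping comparison of local factors. First I would expand both $\disc_Y(KL)$ and $\disc(K)^{|G|}\disc(L)^n$ as products over $\fp\in \Omega_k$. By the definition of $\disc_Y$ in Definition \ref{def_Y}, the local factor at any prime with $|\fp|>Y$ is exactly $\disc_\fp(K)^{|G|}\disc_\fp(L)^n$, which is the same as the corresponding factor in the multiplicative decomposition of $\disc(K)^{|G|}\disc(L)^n$. Taking the quotient, all contributions at primes of norm $>Y$ cancel, leaving
\[
d_\Pi \;=\; \frac{\disc(K)^{|G|}\disc(L)^n}{\disc_Y(KL)} \;=\; \prod_{\fp\in S_Y}\frac{\disc_\fp(K)^{|G|}\disc_\fp(L)^n}{\disc_\fp(KL)}.
\]
It therefore suffices to show each factor on the right depends only on the pair $(\mathcal{K}_\fp,\mathcal{L}_\fp)$ prescribed by $\Pi$.

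For the numerator, $\disc_\fp(K)$ and $\disc_\fp(L)$ are by construction invariants of the local étale algebras $(K)_\fp=\mathcal{K}_\fp$ and $(L)_\fp=\mathcal{L}_\fp$. For the denominator I would invoke Lemma \ref{disc_inv}, which asserts that $\disc_\fp(KL)$ is determined by $(K)_\fp$ and $(L)_\fp$ as soon as the Galois closures satisfy $\widetilde{K}\cap\widetilde{L}=k$. Granting that disjointness, one concludes that the entire right-hand side depends only on $\Pi$, and the lemma follows.

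The one step that is not automatic, and which I expect to be the main obstacle, is verifying $\widetilde{K}\cap\widetilde{L}=k$. Since $L/k$ is already Galois, $\widetilde{L}=L$, and any subfield $M\subseteq \widetilde{K}\cap L$ is Galois over $k$ with $\op{Gal}(M/k)$ simultaneously a quotient of $S_n=\op{Gal}(\widetilde{K}/k)$ and of $G=\op{Gal}(L/k)$. A direct inspection of the normal subgroup lattices shows that for $n\in\{3,4,5\}$ the only nilpotent quotients of $S_n$ are the trivial group and $\Z/2\Z$: the normal subgroups of $S_3$ are $\{1,A_3,S_3\}$, those of $S_4$ are $\{1,V_4,A_4,S_4\}$, and those of $S_5$ are $\{1,A_5,S_5\}$, and in each case the only non-$S_n$ nilpotent quotient is $S_n/A_n\cong\Z/2\Z$. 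Under the hypotheses of Theorem \ref{main theorem}, $|G|$ is in particular coprime to $2$, so $\Z/2\Z$ is not a quotient of $G$; hence $\op{Gal}(M/k)=1$, i.e. $M=k$. This allows Lemma \ref{disc_inv} to be applied at every $\fp\in S_Y$, completing the proof.
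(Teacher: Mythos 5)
Your proof is correct and follows the same line as the paper's: both decompose the quotient $\disc(K)^{|G|}\disc(L)^n/\disc_Y(KL)$ as a product over $\fp\in S_Y$ of local factors $\disc_\fp(K)^{|G|}\disc_\fp(L)^n\disc_\fp(KL)^{-1}$, and both invoke Lemma~\ref{disc_inv} to conclude that each local factor is determined by the prescribed pair $(\mathcal{K}_\fp,\mathcal{L}_\fp)$. The one place you go beyond the paper is in explicitly verifying the hypothesis $\widetilde{K}\cap\widetilde{L}=k$ required by Lemma~\ref{disc_inv}: the paper silently assumes linear disjointness (as it does throughout Section~4), whereas you check it via the observation that $\widetilde{K}\cap L$ is Galois over $k$ with group a common quotient of $S_n$ and $G$, that the only nilpotent quotients of $S_n$ for $n\in\{3,4,5\}$ are $1$ and $\Z/2\Z$, and that $2\nmid|G|$ under the standing hypotheses. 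That extra verification is correct and is the kind of detail the paper elides; it is worth recording.
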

\begin{proof}
    The results is similar to Wang's argument (cf. \cite[equation (5.5)]{Wang2020}), which applies in the case when $G$ is abelian. From the definition of $\disc_Y$, it follows that,
    \[
    \disc_Y(KL):= \prod_{\fp \in S_Y}\disc_{\fp}(KL)\prod_{\fp \notin S_Y}\disc_{\fp}(K)^{|G|}\disc_{\fp}(L)^n.
    \]
    Hence, it follows that 
    \[
    \disc_Y(KL):=\frac{\disc(K)^{|G|}\disc(L)^n}{d_{\Pi}}
    \]
    with $d_{\Pi} =  \prod_{\fp \in S_Y}\disc_{\fp}(K)^{|G|}\disc_{\fp}(L)^n\disc_{\fp}(KL)^{-1}$. From Lemma \ref{disc_inv} we deduce that $\disc_{\fp}(K)$,$\disc_{\fp}(L)$ and $\disc_{\fp}(KL)$ are determined by $(K)_{\fp}$ and $(L)_{\fp}$ and hence $d_{\Pi}$ depends only on $\Pi$.
\end{proof}
Before proceeding we state the following easy Lemma from analytic number theory which will be used in the proof of the next Proposition.
\begin{Lemma}\label{ant lemma}
    Let $\{a_n\}$ be a sequence indexed by the positive integers for which $b_n \geq 0$ for all $n\geq 1$. For a fixed positive real number $a>0$ the following are equivalent:
\begin{enumerate}
\item{
for all $\epsilon>0$
\[
\sum_{n\le X} a_n \ll X^{a+\epsilon}\,,
\]
}

\item{
the Dirichlet series $\sum_{n=1}^{\infty} a_n n^{-s}$ converges for all real numbers $s>a$.
}
\end{enumerate}
\end{Lemma}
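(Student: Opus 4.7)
The statement is a standard Tauberian-style equivalence between partial-sum growth and convergence of the associated Dirichlet series, made possible by the nonnegativity hypothesis (presumably the condition should read $a_n\geq 0$). The plan is to prove the two implications separately and to keep the arguments elementary, as no deep Tauberian theorem is needed here.

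For the implication (1)$\Rightarrow$(2), the natural tool is Abel's summation formula. Writing $A(X):=\sum_{n\leq X}a_n$, partial summation gives
\[
\sum_{n\leq N}a_n n^{-s}=A(N)N^{-s}+s\int_{1}^{N}A(t)\,t^{-s-1}\,dt.
\]
Given a real $s>a$, choose $\epsilon>0$ so small that $s>a+\epsilon$. The hypothesis $A(t)\ll t^{a+\epsilon}$ makes the boundary term tend to $0$ and the integrand bounded by $t^{a+\epsilon-s-1}$ with exponent strictly less than $-1$, so the integral converges. Taking $N\to\infty$ then shows that the Dirichlet series converges at $s$.

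For (2)$\Rightarrow$(1), the key observation is that nonnegativity allows the crude comparison $1\leq (X/n)^{s}$ for $n\leq X$ and $s>0$. Fix any real $s>a$, for which the series converges by hypothesis to some finite value $C(s)$. Then
\[
A(X)=\sum_{n\leq X}a_n\leq X^{s}\sum_{n\leq X}a_n n^{-s}\leq C(s)\,X^{s}.
\]
Taking $s=a+\epsilon$ for any prescribed $\epsilon>0$ yields $A(X)\ll_{\epsilon} X^{a+\epsilon}$, as required.

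There is essentially no hard step here; the only subtlety worth flagging is that the second implication would fail without the sign condition on $a_n$ (one cannot otherwise majorize $\sum_{n\leq X}a_n$ by its Dirichlet tail), so the proof should explicitly invoke $a_n\geq 0$ at the relevant point. I would present the two implications in this order, keep the partial summation computation in a single displayed equation, and conclude with one line verifying that, since the bound holds for every real $s>a$, the quantifier "for all $\epsilon>0$" in (1) is recovered.
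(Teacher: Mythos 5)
Your proof is correct and complete. The paper itself does not supply a proof of this lemma; it simply cites \cite[Lemma 3.1]{MR4047213} and refers to \cite{tenenbaumbook}, so there is no in-text argument to compare against. What you have written is the standard argument that those references give: Abel summation for $(1)\Rightarrow(2)$, and the trivial majorization $a_n \leq a_n (X/n)^s$ for $n \leq X$ (valid precisely because $a_n \geq 0$) for $(2)\Rightarrow(1)$. You are also right that the hypothesis as printed, $b_n \geq 0$, is a typo for $a_n \geq 0$; that nonnegativity is exactly where the second implication uses it, as you flag. Nothing is missing.
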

The above Lemma is also stated in \cite[Lemma 3.1]{MR4047213}. We refer to \cite{tenenbaumbook} for a proof. We state the next Proposition.
\begin{proposition}\label{bound_Y}
    Let $Y > 0$ then we have that,
    \[
    N_{k,Y}(S_n\times G, X) \sim c(k, S_n\times G, Y)X^{\frac{1}{|G|}}.
    \]
\end{proposition}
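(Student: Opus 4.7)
The plan is to stratify $N_{k,Y}(S_n\times G;X)$ by the local étale-algebra data $\Pi\in\Sigma_Y$, apply the strong form of Malle's conjecture with prescribed local conditions \emph{separately} to the $S_n$- and the $G$-factors, and then fuse the two one-variable asymptotics into a product asymptotic via an Abel-summation (partial summation) argument of the type appearing in the proofs of Propositions \ref{upper prod} and \ref{lower prod}, but now upgraded from bounds to asymptotic equalities. First, I would observe that for every pair counted by $N_{k,Y}(S_n\times G;X)$, the intersection $\widetilde{K}\cap L$ is Galois over $k$ with Galois group a common quotient of $S_n$ and of the nilpotent group $G$; for $n\in\{3,4,5\}$ the only nilpotent quotients of $S_n$ are $\{1\}$ and $C_2$, while $2\nmid|G|$ by hypothesis, so $\widetilde{K}\cap L=k$ automatically and $[KL:k]=n|G|$. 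Consequently, Lemma \ref{pi_ind} gives
\[
N_{k,Y}(S_n\times G; X) \;=\; \sum_{\Pi\in\Sigma_Y}\#\bigl\{(K,L)\in\Pi : \disc(K)^{|G|}\disc(L)^n \leq d_\Pi X\bigr\},
\]
and since $|\Sigma_Y|<\infty$, it suffices to establish an asymptotic of the shape $c(\Pi)X^{1/|G|}$ for each summand.

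Fix $\Pi=(\mathcal{K}_\fp,\mathcal{L}_\fp)_{\fp\in S_Y}$. The strong form of Malle's conjecture with finitely many prescribed local conditions is available for the $S_n$-factor (Davenport--Heilbronn when $n=3$ and Bhargava--Shankar--Wang when $n\in\{4,5\}$) and for the $G$-factor (Koymans--Pagano), producing positive constants $c_1(\Pi),c_2(\Pi)$ with
\[
F_1(T):=\#\{K : \op{Gal}(\widetilde{K}/k)\simeq S_n,\ (K)_\fp=\mathcal{K}_\fp\ \forall \fp\in S_Y,\ \disc(K)\leq T\}\sim c_1(\Pi)\,T,
\]
\[
F_2(T):=\#\{L : \op{Gal}(L/k)\simeq G,\ (L)_\fp=\mathcal{L}_\fp\ \forall \fp\in S_Y,\ \disc(L)\leq T\}\sim c_2(\Pi)\,T^{a(G)}(\log T)^{i(G,k)-1}.
\]
The arithmetic hypotheses of Theorem \ref{main theorem} force $a(G)<n/|G|$ (this is exactly the inequality $n-|G|a(G)>0$ used in Corollary \ref{3.3}); combined with Lemma \ref{ant lemma} applied to $F_2$, this ensures that the Dirichlet-type series $\sum_{L}\disc(L)^{-n/|G|}$ converges absolutely.

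To assemble the product asymptotic, I would rewrite the inner count as
\[
\#\bigl\{(K,L)\in\Pi : \disc(K)^{|G|}\disc(L)^n \leq d_\Pi X\bigr\}\;=\;\sum_{L\in\Pi,\ \disc(L)^n\leq d_\Pi X}F_1\!\left((d_\Pi X/\disc(L)^n)^{1/|G|}\right),
\]
substitute $F_1(T)=c_1(\Pi)T+o(T)$, and justify the interchange of the sum and the asymptotic. The resulting leading term is
\[
c_1(\Pi)\,(d_\Pi X)^{1/|G|}\sum_{L\in\Pi}\disc(L)^{-n/|G|},
\]
and summing over $\Pi\in\Sigma_Y$ yields the claimed asymptotic with the positive constant $c(k,S_n\times G,Y):=\sum_{\Pi}c_1(\Pi)\,d_\Pi^{1/|G|}\sum_{L\in\Pi}\disc(L)^{-n/|G|}$.

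The main technical obstacle I anticipate is the interchange of the asymptotic $F_1(T)\sim c_1(\Pi)T$ with the summation over $L$. I would handle this by splitting the range of summation at a cut-off $R$: on the range $\disc(L)\leq R$ one applies the asymptotic for $F_1$ directly and lets $R\to\infty$, while on the tail $\disc(L)>R$ one uses the trivial uniform bound $F_1(T)\ll T$ (which converts the tail into $\ll X^{1/|G|}\sum_{L\in\Pi,\,\disc(L)>R}\disc(L)^{-n/|G|}$) and then invokes Proposition \ref{loc-uni-est} together with Lemma \ref{ant lemma} to conclude that this tail tends to $0$ as $R\to\infty$. This is essentially the asymptotic refinement of Propositions \ref{upper prod} and \ref{lower prod}, and it specializes to the strategy implicit in \cite[Section 5]{Wang2020} for the case where $G$ is abelian; the new ingredient is that the $G$-side asymptotic comes from Koymans--Pagano rather than from abelian class field theory, and that Proposition \ref{loc-uni-est} provides the uniformity needed to control the tail.
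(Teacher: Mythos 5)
Your proposal takes essentially the same route as the paper: decompose by $\Pi\in\Sigma_Y$, sum over the $G$-extensions $L$, apply the precise $S_n$-asymptotic with finitely many local conditions to each inner count, and control the tail $\disc(L)>R$ via convergence of $\sum_L \disc(L)^{-n/|G|}$ (Lemma~\ref{ant lemma} plus the fact that $n/|G|>a(G)$). The one inaccuracy is your invocation of the \emph{strong form} of Malle with local conditions for the $G$-factor (the asserted asymptotic for $F_2$) via Koymans--Pagano: their theorem requires all elements of order $\ell_G$ in $G$ to be central, a hypothesis not assumed in Theorem~\ref{main theorem}, but your argument in fact only uses the upper bound on $F_2$, which is available for every finite nilpotent $G$ (e.g.\ via Proposition~\ref{loc-uni-est} with $\fq=(1)$), so this overstatement is harmless and your proof goes through exactly as the paper's does.
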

\begin{proof}
    We count extensions by specifying local conditions at each prime $\fp \in S_Y$. Let $\Pi \in \Sigma_Y$, we consider the counting function
    \begin{align*}
        N_{k,Y,\Pi}(S_n\times G; X) =& \#\{(K,L): [K:k]= n, [L:k] = |G|, \op{Gal}(\widetilde{K}/k) \simeq S_n, \op{Gal}(L/k) \simeq G,\\
         & \text{ and }(K,L) \in \Pi, \disc_Y(KL) \leq X \}.
    \end{align*}
    Using Lemma \ref{pi_ind} we obtain the following equality,
    \begin{align*}
        N_{k,Y,\Pi}(S_n\times G; X) = &\#\{(K,L): [K:k]= n, [L:k] = |G|, \op{Gal}(\widetilde{K}/k) \simeq S_n, \op{Gal}(L/k) \simeq G,\\
         & \text{ and }(K,L) \in \Pi, \disc(K)^{|G|}\disc(L)^n \leq d_{\Pi}X \}.
    \end{align*}
    We use Corollary \ref{3.3} along with the bounds from \cite{bhargava2015geometryofnumbers} and \cite{koymanspagano2023}
    \begin{equation}\label{prev bounds}
    \begin{split}
    & N_k(S_n; X)\ll_{n, k} X, \text{ and } \\
    & N_k(G; X) \ll_{G, k} X^{a(G)}(\log X)^{i(G,k)-1},
\end{split}
\end{equation} to get that
    \[
    N_{k,Y,\Pi}(S_n\times G; X) \ll_{k,S_n,G,\Pi} X^{\frac{1}{|G|}}.
    \]
    Given a $G$-extension $L/k$, we write $L\in \Pi$ to mean that there exists some number field $K$ with $[K:k]=n$ and $\widetilde{K}$ an $S_n$-extension of $k$, for which $(K, L)\in \Pi$. Given any $G$-extension $L \in \Pi$, we also consider the counting function,
    \begin{align*}
        N_{k,Y,\Pi}^L(S_n\times G; X) = &\#\{K: [K:k]= n, \op{Gal}(\widetilde{K}/k) \simeq S_n, (K,L) \in \Pi,\\
        & \text{ and } \disc(K)^{|G|}\disc(L)^n \leq d_{\Pi}X \}.
    \end{align*}
    Since we know precise asymptotics for counting $S_n$ degree $n$ extensions \cite{bhargava2015geometryofnumbers} with finitely many local conditions, we deduce that for any $L \in \Pi$ the asymptotic estimate holds
    \[
    N_{k,Y,\Pi}^L(S_n\times G; X) \sim \frac{c(k, S_n, \Pi)}{\disc(L)^{n/|G|}}\cdot X^{\frac{1}{|G|}}.
    \]
    We also note that,
    \[
    N_{k,Y,\Pi}(S_n\times G; X) = \sum_{L \in \Pi} N_{k,Y,\Pi}^L(S_n\times G; X).
    \]
    Given $Z > 0$, it is clear that,
    \[
    \sum_{\substack{L \in \Pi,\\\disc(L) \leq Z}} N_{k,Y,\Pi}^L(S_n\times G; X) \sim C_Y^Z(k,S_n\times G, \Pi)X^{\frac{1}{|G|}}.
    \]
    Note that $C_Y^Z(k, S_n\times G, \Pi)$ is an increasing function of $Z > 0$ and it is bounded. Hence,
    \[
    \lim_{Z \to \infty}C_Y^Z(k, S_n\times G, \Pi)
    \]
    exists and we denote it by $C_Y(k, S_n\times G, \Pi)$. Clearly, $$C_Y(k, S_n\times G, \Pi) \leq \liminf_{X \to \infty}\frac{N_{k,Y,\Pi}(S_n\times G; X)}{X^{1/|G|}}.$$
    We consider the sum,
    \[
    \sum_{\substack{L \in \Pi,\\\disc(L) > Z}} N_{k,Y,\Pi}^L(S_n\times G; X) \ll_{k,S_n\times G,\Pi, Y} \left(\sum_{\substack{L \in \Pi,\\\disc(L) > Z}} \frac{1}{\disc(L)^{n/|G|}}\right)X^{\frac{1}{|G|}}.
    \]
    Using Lemma \ref{ant lemma} and the bound \ref{prev bounds} we deduce that the sum,
    \[
    \sum_{\substack{L \in \Pi,\\\disc(L) > Z}} \frac{1}{\disc(L)^{s}}
    \]
    converges when $s > a(G)$. Since $\frac{n}{|G|} > a(G)$ we obtain that
    \[
    \lim_{Z \to \infty}\limsup_{X\to \infty} \sum_{\substack{L \in \Pi,\\\disc(L) > Z}} \frac{N_{k,Y,\Pi}^L(S_n\times G; X)}{X^{1/|G|}} = 0.
    \]
    Therefore, we have that,
    \[
    \limsup_{X \to \infty}\frac{N_{k,Y,\Pi}(S_n\times G; X)}{X^{1/|G|}} \leq C_Y(k, S_n\times G, \Pi).
    \]
    Hence, $N_{k,Y,\Pi}(S_n\times G; X)\sim C_Y(k, S_n\times G, \Pi)X^{\frac{1}{|G|}}$. Since the set $\Sigma_Y$ is finite and
    \[
    N_{k,Y}(S_n\times G, X)= \sum_{\Pi \in \Sigma_Y} N_{k,Y,\Pi}(S_n\times G; X),
    \]
    the Proposition follows.
\end{proof}
We have determined precise asymptotics for $N_{k, Y}(S_n\times G; X)$ across all $Y > 0$ mirroring the result we aim to establish for $N_{k}(S_n\times G; X)$. Next, we analyze the difference
\[N_k(S_n\times G; X) - N_{k, Y}(S_n\times G; X)\]
and obtain an upper bound for this expression, thereby concluding the proof of Theorem \ref{main theorem}. Before proceeding, we state the following important observation from index computation in Proposition \ref{index_comp} and local uniformity estimates in Theorem \ref{est_sn}.
Given a square-free ideal $\fq$ in $\cO_k$ and $\sigma \in S_n$ we denote by $N_{k,\fq,\sigma}(S_n; X)$ the counting function for number of $S_n$ degree $n$ extensions $K$ of $k$ such that $\widetilde{K}$ is tamely ramified at all primes $\fp \mid \fq$ with inertia groups equal to $\langle\sigma \rangle$ up to conjugacy. We consider the set,
\[
S_{\sigma}:=\{s \geq 0 : N_{k,\fq,\sigma}(S_n; X) \ll_{\epsilon} X/|\fq|^{s-\epsilon}\text{ for all squarefree }\fq\text{ in }k\},
\]
and we define $s_{\sigma} = \sup S_{\sigma}$. The Theorem \ref{est_sn} gives explicit lower bounds on $s_\sigma$ for certain $\sigma \in C(S_n)$ when $n =3,4,$ or $5$.

\begin{Lemma}\label{main Lemma}
    Let $n \in \{3, 4, 5\}$, let $G$ be a finite nilpotent group. Then,
\[
\ind(\sigma, g)/m - \ind(\sigma) + s_{\sigma} > 1,
\]
for all $(\sigma, g) \in (S_n\times G)\setminus \{(1, 1)\}$.
\end{Lemma}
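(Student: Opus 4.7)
The plan is to split the proof according to the structure of the pair $(\sigma, g)\in (S_n\times G)\setminus\{(1,1)\}$, treating three cases: (i) $\sigma=1_{S_n}$, $g\neq 1_G$; (ii) $\sigma\neq 1$, $g\neq 1_G$; and (iii) $\sigma\neq 1$, $g=1_G$. In each case one obtains a lower bound on $\ind(\sigma,g)/m$ by a direct cycle count or by invoking Proposition~\ref{ind(sigma, g) propn}, and couples this with a lower bound on $s_\sigma$ drawn from Theorem~\ref{est_sn} or from elementary considerations.

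In case (i), since $(1_{S_n},g)$ preserves each of the $n$ horizontal slices of $\{1,\ldots,n\}\times\{1,\ldots,m\}$ and acts by $g$ on each slice, a direct count of orbits gives $\ind(1_{S_n},g) = n\cdot\ind(g)$. The identity $\ind(g)/m = 1 - 1/e(g)\ge (\ell_G-1)/\ell_G$, valid in the regular representation for any non-trivial $g$, combined with the assumptions $\ell_G\ge 3,\,5,\,7$ for $n=3,\,4,\,5$ from Theorem~\ref{main theorem}, yields $\ind(1,g)/m\ge 2$. Since $\ind(1)=0$ and $s_1\ge 0$, the left-hand side is at least $2>1$.

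In case (ii), Proposition~\ref{ind(sigma, g) propn} under the given arithmetic hypotheses on $|G|$ produces a strict lower bound on $\ind(\sigma,g)/m$ for each conjugacy class of $\sigma$. In many classes (e.g.\ $\sigma=(12)$ for $n=3,4$, or any $\sigma$ not conjugate to the full $n$-cycle for $n=5$), the inequality $\ind(\sigma,g)/m - \ind(\sigma) > 1$ is already immediate and no input from $s_\sigma$ is needed. For the remaining classes one combines Proposition~\ref{ind(sigma, g) propn} with the estimates $s_{(123)}\ge 2$ for $n=3$, $s_{(12)(34)},s_{(1234)}\ge 2$ for $n=4$, and $s_{(12345)}\ge 2/5$ for $n=5$ from Theorem~\ref{est_sn}; a routine arithmetic check in each class then confirms the strict inequality. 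For instance, for $n=5,\sigma=(12345)$ the Proposition gives $\ind/m\ge 34/7$ and so the sum is $\ge 34/7-4+2/5=44/35>1$.

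Finally, in case (iii), a direct cycle count gives $\ind(\sigma,1_G)=m\cdot\ind(\sigma)$, reducing the desired inequality to $s_\sigma>1$. For all $\sigma\in S_n$ whose cycle structure is not that of a single transposition, one has $\ind(\sigma)\ge 2$, and the trivial bound $s_\sigma\ge\ind(\sigma)$---which follows from the fact that $|\fq|^{\ind(\sigma)}$ divides $\disc(K)$ whenever each $\fp\mid\fq$ has tame $\sigma$-inertia---closes these sub-cases; the bounds from Theorem~\ref{est_sn} make this explicit for the totally and overramified classes. The principal obstacle is therefore the single-transposition case $\sigma=(ij)$ with $\ind(\sigma)=1$: here the strict inequality $s_{(ij)}>1$ is not immediate from the trivial bound nor from Theorem~\ref{est_sn}, and to dispose of it one expects to appeal to a sharper Davenport--Heilbronn-type uniformity estimate for $S_n$-extensions with prescribed partial ramification, or to observe that within the downstream application to Theorem~\ref{main theorem} these contributions can be absorbed into the $\disc(K)^m$ factor and need not invoke the present lemma.
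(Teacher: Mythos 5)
Your cases (i) and (ii) mirror the paper's argument, which reads in full: for $\sigma$ totally ramified or overramified, invoke Theorem~\ref{est_sn}; otherwise, combine Proposition~\ref{ind(sigma, g) propn} with the trivial bound $s_\sigma\geq 0$. Your numeric verifications in case (ii) are correct, and your direct orbit count for $\sigma=1_{S_n}$ in case (i), which the paper does not write out, is also correct.

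Your case (iii), however, rests on a claimed ``trivial bound'' $s_\sigma\geq\ind(\sigma)$, which is not trivial; the offered justification does not establish it. The divisibility $|\fq|^{\ind(\sigma)}\mid\disc(K)$ only constrains $\disc(K)$ to a progression of modulus $|\fq|^{\ind(\sigma)}$; it does not bound the number of such $K$ with $\disc(K)\leq X$ by $X/|\fq|^{\ind(\sigma)}$ without control on the multiplicity of discriminants, and that control is precisely the nontrivial content of the uniformity estimates in Theorem~\ref{est_sn}, available only for the totally/overramified classes. Even among those, the $n=5$ statement only yields $s_{(12345)}\geq 2/5<1$, so the sub-case $(\sigma,g)=((12345),1_G)$ also lies outside the reach of the stated bounds; the transposition is not the only obstruction.

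You are nonetheless right that the sub-case $g=1_G$ is immaterial. In the application, the prime set $S$ of a standard tuple $\Pi$ consists of primes ramified in both $K$ and $L$, so the quantity $\delta$ in the proof of Theorem~\ref{main theorem} only ever evaluates the left-hand side at pairs with $\sigma\neq 1$ and $g\neq 1_G$. Correspondingly, the lemma's stated scope ``for all $(\sigma,g)\in(S_n\times G)\setminus\{(1,1)\}$'' is broader than what its proof addresses: the proof invokes Proposition~\ref{ind(sigma, g) propn}, whose conclusions (as you can check, e.g., $\ind((12),1_G)/|G|=1\not>2$) hold only for $g\in G^\ast$, and the $g=1_G$ case is silently omitted. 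The operative content of the lemma is thus exactly your case (ii), and there your argument and the paper's coincide.
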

\begin{proof}
    In the case when $\sigma$ is totally ramified (or overramified), the result follows from Theorem \ref{est_sn}. In all other cases, the result is a direct consequence of Proposition \ref{ind(sigma, g) propn} by simply employing the trivial lower bound $s_\sigma\geq 0$. Note that when $n=5$, and $\sigma$ is conjugate to $(12345)$, then one happens to be in the totally ramified case and Theorem \ref{est_sn} applies.
\end{proof}
\subsubsection{The difference $N_k(S_n\times G; X) - N_{k,Y}(S_n\times G; X)$}
We fix $Y > 0$. From definition \ref{def_Y} it is obvious that this difference is given by,
\begin{equation}
\begin{split}
        & N_k(S_n\times G; X) - N_{k,Y}(S_n\times G; X) \\
        =& \#\{(K,L)\mid  [K:k]= n, [L:k] = |G|, \op{Gal}(\widetilde{K}/k) \simeq S_n,\\ 
        & \op{Gal}(L/k) \simeq G,
         \text{ and } \disc(KL) < X < \disc_Y(KL)\}.
\end{split}
\end{equation}
Our method for bounding the above counting function will closely resemble the proof outlined in Proposition \ref{bound_Y}, wherein we express this counting function as the aggregate of various counting functions parameterized by specific local conditions at various primes in $k$. We proceed by describing these local conditions of our interest, we call such specification of local conditions a \textit{standard tuple}.
\begin{definition}\label{sd tuple}
    A \textit{standard tuple} is a tuple $(S,\mathcal{L})$ satisfying following properties:
    \begin{enumerate}
        \item $S$ is a finite set of primes in $k$,
        \item $\mathcal{L}$ is a tuple $(h_{\fp}, g_{\fp})_{\fp\in S}$ such that:
        \begin{enumerate}
            \item for each $\fp \mid n!|G|$ the pair $(h_{\fp}, g_{\fp})_{\fp\in S}$ is a pair of ramified local étale algebras over $k_{\fp}$ of degree $n$ and $|G|$ respectively,
            \item for each $\fp \nmid n!|G|$ the pair $(h_{\fp}, g_{\fp})_{\fp\in S}$ is a pair of conjugacy classes of inertia generators in $S_n$ and $G$ respectively.
        \end{enumerate}
    \end{enumerate}
\end{definition}
Let $\Sigma_k$ denote the set of all \textit{standard tuples} for the number field $k$. Given $(K,L)$ and a specification of local conditions $\Pi:= (S,\mathcal{L}) \in \Sigma_k$ as above, we write $(K,L) \in \Pi$ if following conditions are satisfied.
\begin{enumerate}
    \item  If $\fp \in \Omega_k$ is such that both $K$, $L$ are ramified at $\fp$ then $\fp \in S$.
    \item For each $\fp \in S$ such that for each $\fp \nmid n!|G|$, we have that $(I_{\fp}(K), I_{\fp}(L)) = (\langle h_{\fp}\rangle,\langle g_{\fp}\rangle)$.
    \item For each $\fp \in S$ such that for each $\fp \mid n!|G|$, we have that $((K)_{\fp}, (L)_{\fp})= (h_{\fp}, g_{\fp})$.
\end{enumerate}
\begin{remark}
    We note that the notion of standard tuples arises in Wang's work; $\Pi$ used in this context is the exact analogue of the set considered in \cite[equation (5.10)]{Wang2020}. 
\end{remark}

Write $K \in \Pi$ if for each $\fp \in S$ we have that, $(K)_{\fp} = h_{\fp}\text{ or }I_{\fp}(K) = \langle h_{\fp}\rangle$ and we write $L \in \Pi$ if for each $\fp \in S$ we have that, $(L)_{\fp} = g_{\fp}\text{ or }I_{\fp}(L) = \langle g_{\fp}\rangle$. Following the notation in \cite{Wang2020}, we write $\exp(h_{\fp}, g_{\fp})$ to denote the exponent of $\disc_{\fp}(KL)$ where $(K)_{\fp} = h_{\fp}$ or $I_{\fp}(K) = \langle h_{\fp}\rangle$ and $(L)_{\fp} = g_{\fp}$ or $I_{\fp}(L) = \langle g_{\fp}\rangle$. The following result shows that $\exp(h_{\fp}, g_{\fp})$ is completely determined by $(h_{\fp}, g_{\fp})$.
\begin{Lemma}\label{exp Lemma}
    In the above setting, $\exp(h_{\fp}, g_{\fp})$ depends only on the tuple $(h_{\fp}, g_{\fp})$.
\end{Lemma}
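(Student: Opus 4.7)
The plan is to split into two regimes depending on whether $\fp$ divides $n!|G|$, reducing each case to a result from Section~\ref{s 2}. Both regimes will rely on the observation that $\widetilde{K}\cap L = k$. To establish this disjointness, I would note that $\op{Gal}(\widetilde{K}\cap L/k)$ is a common quotient of $\op{Gal}(\widetilde{K}/k)\simeq S_n$ and $\op{Gal}(L/k)\simeq G$. Since $G$ is nilpotent, every quotient of $G$ is nilpotent, while the only nilpotent quotients of $S_n$ for $n\in \{3,4,5\}$ are $\{1\}$ and $\Z/2\Z$. The hypotheses of Theorem~\ref{main theorem} force $|G|$ to be odd, so $G$ admits no $\Z/2\Z$ quotient, giving $\widetilde{K}\cap L = k$ (and hence $\widetilde{K}\cap \widetilde{L}= k$ since $L$ is Galois).

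In the tame case $\fp \nmid n!|G|$, the ramification at $\fp$ in both $\widetilde{K}$ and $L$ is tame, so the inertia groups are cyclic with generators $h_\fp \in S_n$ and $g_\fp \in G \hookrightarrow S_{|G|}$. Writing their cycle decompositions in the respective symmetric groups as $h_\fp = \prod_i c_i$ and $g_\fp = \prod_j d_j$, Proposition~\ref{tr places}(2) would give the closed form
\[
\exp(h_\fp, g_\fp) \;=\; n|G| \;-\; \sum_{i,j}\gcd(|c_i|,|d_j|).
\]
The cycle type of $h_\fp$ depends only on its $S_n$-conjugacy class, and the cycle type of $g_\fp$ under the regular embedding is $(e,e,\dots,e)$ where $e$ is the order of $g_\fp$ in $G$, so it depends only on the $G$-conjugacy class of $g_\fp$. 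Thus the right-hand side is an invariant of $(h_\fp, g_\fp)$.

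For the wild case $\fp \mid n!|G|$, the standard tuple directly prescribes the local \'etale algebras $(K)_\fp = h_\fp$ and $(L)_\fp = g_\fp$. Here I would invoke Lemma~\ref{disc_inv}, which applies thanks to the disjointness above, to conclude that $(KL)_\fp$, and hence $\rdisc_\fp(KL)$, is determined by the pair $((K)_\fp,(L)_\fp)$. This yields the stated dependence immediately.

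The main obstacle, or rather the key input, is the disjointness $\widetilde{K}\cap L = k$; it simultaneously unlocks Proposition~\ref{tr places}(2) in the tame regime and Lemma~\ref{disc_inv} in the wild regime. The arithmetic conditions on $|G|$ imposed in Theorem~\ref{main theorem} are exactly what makes this disjointness go through, and once it is in hand both cases reduce to direct appeals to existing results in Section~\ref{s 2}.
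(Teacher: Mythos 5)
Your argument is correct and takes essentially the same route as the paper: both split according to whether $\fp$ divides $n!|G|$, invoking Proposition~\ref{tr places}(2) in the tame regime and Lemma~\ref{disc_inv} in the wild regime. You additionally spell out the linear disjointness $\widetilde{K}\cap\widetilde{L}=k$ forced by the hypotheses on $G$, which the paper's terse proof leaves implicit but which is indeed a standing hypothesis of both cited results.
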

\begin{proof}
    If $\fp \nmid n!|G|$ then using the \cite[Theorem 2.3]{Wang2020}, the indices $\ind(h_{\fp})$ and $\ind(g_{\fp})$ completely determine $\ind(h_{\fp}, g_{\fp})$ and hence $\exp(h_{\fp}, g_{\fp})$. When $\fp \mid n!|G|$, the Lemma \ref{disc_inv} implies that $\disc_{\fp}(KL)$ is completely determined by the pair $((K)_{\fp}, (L)_{\fp})$ which is the pair $(h_{\fp}, g_{\fp})$.
\end{proof}
As a consequence of the above and our definition of a \textit{standard tuple}, we can prove the following important result.
\begin{Lemma}\label{disc inv 2}
    Let $\Pi:= (S, \mathcal{L}) \in \Sigma_k$ be a standard tuple and suppose that $(K,L) \in \Pi$ then,
    \[
    \disc(KL):=\frac{\disc(K)^{|G|}\disc(L)^n}{d_{\Pi}}
    \]
    where $d_{\Pi}$ depends only on $\Pi$.
\end{Lemma}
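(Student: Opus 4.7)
The plan is to mimic the proof of Lemma \ref{pi_ind}, decomposing
\[
d_\Pi = \frac{\disc(K)^{|G|}\disc(L)^n}{\disc(KL)} = \prod_{\fp} \frac{\disc_\fp(K)^{|G|}\disc_\fp(L)^n}{\disc_\fp(KL)}
\]
and checking that each local factor depends only on $\Pi$. The essential new point, compared to Lemma \ref{pi_ind}, is that $\Pi$ imposes no condition at primes $\fp \notin S$, so one has to argue that the local factor there is automatically $1$.

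For $\fp \notin S$, condition (1) of the definition of $(K,L) \in \Pi$ ensures that at least one of $K, L$ is unramified at $\fp$. Assume $K/k$ is unramified at $\fp$ (the other case is symmetric). Then $KL/L$ is obtained by the unramified base change $K/k$, hence is unramified at every prime of $L$ above $\fp$, so $\disc_\fp(KL/L) = \cO_L|_\fp$. The tower formula
\[
\disc_\fp(KL/k) = N_{L/k}\bigl(\disc_\fp(KL/L)\bigr) \cdot \disc_\fp(L/k)^{[KL:L]}
\]
together with $[KL:L] = n$ (using linear disjointness, see below) gives $\disc_\fp(KL) = \disc_\fp(L)^n$; since $\disc_\fp(K) = 1$, the local factor equals $1$.

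For $\fp \in S$ I would split into two sub-cases. If $\fp \mid n!|G|$, the standard tuple specifies $((K)_\fp, (L)_\fp) = (h_\fp, g_\fp)$, so $\disc_\fp(K), \disc_\fp(L)$ are visibly determined and $\disc_\fp(KL)$ is determined by Lemma \ref{disc_inv}. If $\fp \nmid n!|G|$, then the ramification in both $K$ and $L$ is tame and $\Pi$ specifies the conjugacy classes of inertia generators. The discriminants $\disc_\fp(K)$ and $\disc_\fp(L)$ are functions of $\ind(h_\fp)$ and $\ind(g_\fp)$, and Lemma \ref{exp Lemma} (via Proposition \ref{tr places}) shows that $\disc_\fp(KL)$ is likewise determined by $(h_\fp, g_\fp)$. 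In either case the local factor depends only on $\Pi$.

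Combining the two cases, only primes in $S$ contribute nontrivially to $d_\Pi$, and each contribution is a function of $\Pi$ alone, which proves the Lemma. The main technical hurdle is the case $\fp \notin S$: it requires the linear disjointness $\widetilde{K} \cap \widetilde{L} = k$ both to apply the tower formula with the correct degree and to make the Galois-theoretic setup of Proposition \ref{tr places} available. This disjointness is guaranteed by the group-theoretic hypothesis on $|G|$ in Theorem \ref{main theorem}: the prime conditions imposed in cases (1)--(3) rule out any nontrivial Galois quotient common to $S_n$ and $G$, so the intersection $\widetilde{K} \cap \widetilde{L}$ must be trivial over $k$.
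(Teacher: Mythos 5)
Your proof is correct and follows essentially the same approach as the paper: decompose $d_\Pi$ into local factors, show the factor is $1$ outside $S$, and invoke Lemma \ref{exp Lemma} inside $S$. The only difference is at $\fp \notin S$: you compute $\rdisc_\fp(KL/L) = 1$ directly via unramified base change, whereas the paper arrives at $\disc_\fp(KL) = \disc_\fp(L)^n$ by sandwiching the divisibility from Proposition \ref{disc_bound} against the tower formula; the two routes are equivalent, and your explicit note on the linear disjointness $\widetilde{K}\cap\widetilde{L}=k$ (coming from $\gcd(n!,|G|)=1$) is a correct clarification that the paper leaves implicit.
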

\begin{proof}
    This result generalizes \cite[equation (5.11)]{Wang2020}. Let $\fp \notin S$ then either $K/k$ or $L/k$ is not ramified at $\fp$. Without loss of generality, we assume that $K/k$ is not ramified at $\fp$. We deduce from the upper bound in Proposition \ref{disc_bound} that the $\rdisc_{\fp}(KL/k) \mid \rdisc_{\fp}(L/k)^n$. We recall that in the tower $KL/L/k$ we have the following formula,
    \[
    \rdisc(KL/k) = \disc(KL/L)\rdisc(L/k)^n.
    \]
    Therefore, $\rdisc_{\fp}(KL/k) = \rdisc_{\fp}(L/k)^n$. Hence, $\disc_{\fp}(KL) = \disc_{\fp}(L)^n$ or $\disc_{\fp}(KL) = \disc_{\fp}(K)^{|G|}$ for all $\fp \in S$ depending upon whether $\fp$ is unramified in $K$ or $L$. We obtain that,
    \[
    \disc(KL) = \disc(K)^{|G|}\disc(L)^n\prod_{\fp \in S}|\fp|^{\exp(h_{\fp},g_{\fp})-n\cdot\exp(g_{\fp})-|G|\cdot \exp(h_{\fp})}.
    \]
 Lemma \ref{exp Lemma} implies that the quantity
    \[d_{\Pi} = \prod_{\fp \in S}|\fp|^{n\cdot\exp(g_{\fp})+|G|\cdot \exp(h_{\fp})-\exp(h_{\fp},g_{\fp})},\]
    depends only on $\Pi$.
\end{proof}
For $\Pi \in \Sigma_k$, we set
\begin{align}\label{N k Pi}
    N_{k,\Pi}(S_n\times G; X):= \#\{(K,L) \in \mathcal{E}_k: (K,L) \in \Pi\text{ and } \disc(KL) < X \},
\end{align}
where \begin{equation}\label{mathcal{E}_k}\mathcal{E}_k:= \{(K,L): [K:k]= n, [L:k] = |G|, \op{Gal}(\widetilde{K}/k) \simeq S_n\text{ and } \op{Gal}(L/k) \simeq G\}.\end{equation} From Lemma \ref{disc inv 2} we deduce that
\begin{equation}\label{imp eqn}
    \begin{split}
         N_{k,\Pi}(S_n\times G; X)=& \#\{(K,L) \in \mathcal{E}_k: (K,L) \in \Pi \text{ and }\\
          & \disc(K)^{|G|}\disc(L)^n < X d_{\Pi} \}\\
         =& \#\{(K,L)\in \mathcal{E}_k: (K,L) \in \Pi \text{ and }\\
         & \prod_{\fp\notin S}\disc_{\fp}(K)^{|G|}\disc_{\fp}(L)^n < {X}/{\prod_{\fp\in S}|\fp|^{\exp(h_{\fp},g_{\fp})}}\}
\end{split}
\end{equation}

We also consider the same counting function with invariants $\disc_Y$ for $Y > 0$. We denote these by $N_{k,Y,\Pi}(S_n\times G; X)$. Then we have the following expression for any $Y > 0$,
\begin{equation}\label{decomp_diff}
    \begin{split}
        N_{k}(S_n\times G; X)-N_{k, Y}&(S_n\times G; X) \\
         &= \sum_{\Pi \in \Sigma_k} N_{k,\Pi}(S_n\times G; X)-N_{k,Y,\Pi}(S_n\times G; X).
    \end{split}
\end{equation}
Suppose that $(K,L)\in \Pi$. We recall from the conditons on $S$ that any prime $\fp\notin S$ must be unramified in either $K$ or $L$. Therefore, we find that for $\fp\notin S$, 
\[\op{Disc}_{\fp}(KL)=\op{Disc}_{\fp}(K)^{|G|} \op{Disc}_{\fp}(L)^{n}. \]
Thus, if $S \subset S_Y$, then we have that \begin{equation}\label{disceqn}\disc(KL)=\disc_Y(KL)\end{equation} for all $(K,L) \in \Pi$.
\begin{Lemma}
    Let $\Pi\in \Sigma_k$ be such that $\prod_{\fp\in S}|\fp| \leq Y$. Then, \[ N_{k,\Pi}(S_n\times G; X)= N_{k,Y,\Pi}(S_n\times G; X).\]
\end{Lemma}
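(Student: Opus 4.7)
The plan is to derive the Lemma directly from the identity \eqref{disceqn} that was just recorded, by first verifying that the hypothesis $\prod_{\fp\in S}|\fp|\leq Y$ forces the set-theoretic containment $S\subset S_Y$. Every prime ideal $\fp$ of $\cO_k$ has absolute norm $|\fp|\geq 2$, so if some $\fp_0\in S$ satisfied $|\fp_0|>Y$, we would already have $\prod_{\fp\in S}|\fp|\geq |\fp_0|>Y$, contradicting the hypothesis. (The edge case $S=\emptyset$ is vacuous on both sides.) Hence every $\fp\in S$ has $|\fp|\leq Y$, which is exactly the condition $S\subset S_Y$.

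With the containment $S\subset S_Y$ established, the identity \eqref{disceqn} (valid for any $(K,L)\in\Pi$ whenever $S\subset S_Y$) gives
\[
\disc(KL)=\disc_Y(KL)\qquad\text{for every }(K,L)\in\Pi.
\]
The two counting functions of interest, $N_{k,\Pi}(S_n\times G;X)$ defined in \eqref{N k Pi} and $N_{k,Y,\Pi}(S_n\times G;X)$, impose the same membership condition $(K,L)\in\Pi$ and differ only in whether the inequality to $X$ is phrased in terms of $\disc(KL)$ or of $\disc_Y(KL)$. Since these two discriminants agree pair-by-pair on $\Pi$, the underlying counted sets coincide, and the equality
\[
N_{k,\Pi}(S_n\times G;X)=N_{k,Y,\Pi}(S_n\times G;X)
\]
follows at once.

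There is no serious obstacle here: the analytic content of the statement has already been absorbed into the derivation of \eqref{disceqn} (via the observation that any $\fp\notin S$ is unramified in at least one of $K$ or $L$, forcing $\disc_{\fp}(KL)=\disc_{\fp}(K)^{|G|}\disc_{\fp}(L)^{n}$). The only task the Lemma introduces is the elementary norm-bound argument above, which converts the slightly technical hypothesis $\prod_{\fp\in S}|\fp|\leq Y$ into the clean containment $S\subset S_Y$ needed to invoke \eqref{disceqn}. In writing the proof I would cite \eqref{disceqn} explicitly and record the one-line containment argument, since that is the sole place the hypothesis is actually used.
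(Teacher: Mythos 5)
Your proof is correct and follows the same route as the paper: both reduce the hypothesis $\prod_{\fp\in S}|\fp|\leq Y$ to the containment $S\subseteq S_Y$ (using that $|\fp|\geq 2$ for every prime, so each factor is bounded by the full product) and then invoke \eqref{disceqn} to conclude that $\disc(KL)=\disc_Y(KL)$ for all $(K,L)\in\Pi$, whence the two counting functions coincide. The only difference is that you spell out the elementary norm-bound step that the paper leaves implicit.
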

\begin{proof}
 The condition that $\prod_{\fp\in S} |\fp|\leq Y$ implies that $|\fp|\leq Y$ for all $\fp\in S$. Consequently, we get that $S\subseteq S_Y$, and the conclusion follows from \eqref{disceqn}.
\end{proof}
Therefore, only $\Pi \in \Sigma_k$ with $\prod_{\fp\in S}|\fp| > Y$ have a positive contribution in the above sum \eqref{imp eqn}. This motivates the following definition.
\begin{definition}\label{res disc}
    Let $(K,L) \in \mathcal{E}_k$ (cf. \eqref{mathcal{E}_k}) and $\Pi=(S, \mathcal{L})\in \Sigma_k$. Assume that $(K, L) \in \Pi$. Then, the \emph{restricted discriminant} of $K/k$ is defined as
    \[
    \disc_{res}^{\Pi}(K):= \prod_{\fp \notin S}\disc_{\fp}(K).
    \]
    Define the restricted discriminant $\disc_{res}^{\Pi}(L)$ for $L/k$ in the same way.
\end{definition}
We now prove asymptotic upper bounds for the number of $S_n$ degree $n$ extensions and $G$-extensions when ordered by \textit{restricted discriminant}. We now define the relevent counting functions. Given $\Pi \in \Sigma_k$, we set
\[
N_{k,\Pi,res}(S_n; X):= \#\{K: [K:k]=n, \op{Gal}(K/k)\simeq S_n, K \in \Pi, \text{ and }\disc_{res}^\Pi(K) \leq X\},
\]
and
\[
N_{k,\Pi,res}(G; X):= \#\{L: [L:k]=|G|, \op{Gal}(L/k)\simeq G, L \in \Pi, \text{ and }\disc_{res}^\Pi(L) \leq X\}.
\]
Set $C(S_n)$ to be the set of conjugacy classes in $S_n$. Given $\sigma\in C(S_n)$, we take $\fq_\sigma := \prod_{\fp \in S, h_{\fp} = \sigma}\fp$. Recall that $r_\sigma$ is defined in Lemma \ref{main Lemma}.
\begin{Lemma}\label{res sn}
    With respect to above notation, we have that
    \[
    N_{k,\Pi,res}(S_n; X) \ll_{k,S_n, \epsilon} \left(\prod_{\sigma \in C(S_n)}|\fq_\sigma|^{-r_\sigma + \op{ind}(\sigma)+\epsilon}\right)X.
    \]
\end{Lemma}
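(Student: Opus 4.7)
The plan is to reduce the count to a variant of the global discriminant count and then apply the uniformity estimates from Theorem~\ref{est_sn}. First I would record the factorization
\[
\disc(K) \;=\; \disc_{res}^{\Pi}(K)\cdot \prod_{\mathfrak{p}\in S}\disc_{\mathfrak{p}}(K),
\]
which follows immediately from Definition~\ref{res disc}. Since $S$ contains only finitely many primes above $n!|G|$, and the local discriminants of an $S_n$-extension of $k_\mathfrak{p}$ are bounded uniformly in terms of $k, n, G$ at such primes, their contribution can be absorbed into a constant $C_{\Pi}=O_{k,n,G}(1)$. For the remaining tame primes $\mathfrak{p}\in S$, the condition $I_{\mathfrak{p}}(K)=\langle h_{\mathfrak{p}}\rangle$ forces $\disc_{\mathfrak{p}}(K)=|\mathfrak{p}|^{\op{ind}(h_{\mathfrak{p}})}$, so collecting primes by their inertia class yields
\[
\prod_{\mathfrak{p}\in S,\, \mathfrak{p}\nmid n!|G|} \disc_{\mathfrak{p}}(K) \;=\; \prod_{\sigma\in C(S_n)} |\mathfrak{q}_{\sigma}|^{\op{ind}(\sigma)}.
\]
Consequently $\disc_{res}^{\Pi}(K)\leq X$ is equivalent (up to $C_{\Pi}$) to
\[
\disc(K) \;\leq\; X \cdot \prod_{\sigma\in C(S_n)} |\mathfrak{q}_{\sigma}|^{\op{ind}(\sigma)}.
\]

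Next I would apply the uniformity estimates of Theorem~\ref{est_sn} conjugacy class by conjugacy class. By the definition of $r_{\sigma}$ (= $s_\sigma$ of the Main Lemma~\ref{main Lemma}), for any squarefree ideal $\mathfrak{q}$ supported away from $n!|G|$,
\[
N_{k,\mathfrak{q},\sigma}(S_n; Y)\;\ll_{k,S_n,\epsilon}\; \frac{Y}{|\mathfrak{q}|^{r_{\sigma}-\epsilon}},
\]
where for classes $\sigma$ not covered by Theorem~\ref{est_sn} one uses the trivial bound $r_{\sigma}\geq 0$. The key technical step is to combine these estimates across different conjugacy classes: since the local conditions at distinct primes are independent, one can apply the uniformity estimate sequentially (or, equivalently, split the squarefree ideal $\prod_{\sigma}\mathfrak{q}_{\sigma}$ and run the sieve piece by piece), and the resulting bound is multiplicative in the $|\mathfrak{q}_{\sigma}|$. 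Setting $Y = C_{\Pi}\cdot X\prod_{\sigma}|\mathfrak{q}_{\sigma}|^{\op{ind}(\sigma)}$, this yields
\[
N_{k,\Pi,res}(S_n;X) \;\ll_{k,S_n,\epsilon}\; Y\cdot \prod_{\sigma\in C(S_n)} |\mathfrak{q}_{\sigma}|^{-r_{\sigma}+\epsilon}
\;\ll\; X\cdot \prod_{\sigma\in C(S_n)} |\mathfrak{q}_{\sigma}|^{-r_{\sigma}+\op{ind}(\sigma)+\epsilon},
\]
as desired.

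The main obstacle is justifying the joint (multiplicative) application of the uniformity estimates across different conjugacy classes, since Theorem~\ref{est_sn} is formulated for a single class $\sigma$ at a time. A careful inspection of the proofs in \cite{dw88,Wang2020} shows that the underlying sieve arguments factor over primes, so the bounds do stack multiplicatively when one prescribes inertia conditions at disjoint sets of primes; alternatively, one may iterate the estimate, at each stage absorbing the previously fixed local conditions into the implicit constant, and then verify that the cumulative loss is at most $|\prod_\sigma \mathfrak{q}_\sigma|^{\epsilon}$, which is harmless after relabeling $\epsilon$. Aside from this point, all other steps are routine bookkeeping with the local factorizations, the tame discriminant formula $\disc_{\mathfrak{p}}(K)=|\mathfrak{p}|^{\op{ind}(h_{\mathfrak{p}})}$, and the observation that primes in $S$ above $n!|G|$ contribute only a bounded quantity.
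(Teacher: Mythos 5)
Your first two reductions—factoring $\disc(K)$ into $\disc_{res}^\Pi(K)$ times the local contributions, absorbing the finitely many wild primes into a bounded constant, and using the tame formula $\disc_{\fp}(K)=|\fp|^{\op{ind}(h_\fp)}$ to rewrite the constraint as $\disc(K)\leq CX\prod_\sigma|\fq_\sigma|^{\op{ind}(\sigma)}$—coincide with the paper's proof. The divergence, and the genuine gap in your argument, occurs at the final step.

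You propose to apply Theorem~\ref{est_sn} ``conjugacy class by conjugacy class'' and to argue that the resulting savings over the different $\fq_\sigma$ stack multiplicatively, acknowledging this as the main obstacle and deferring its justification to an unverified claim that the sieve arguments in \cite{dw88,Wang2020} ``factor over primes.'' This cannot be waved through as stated: Theorem~\ref{est_sn} bounds the number of extensions satisfying one specific ramification condition at \emph{all} primes of a single squarefree ideal $\fq$, and there is no general principle that lets you multiply two such bounds for two different ramification conditions at disjoint ideals. The paper avoids this issue entirely. The key observation is that $r_\sigma=0$ (the trivial bound) for every conjugacy class except the one(s) covered by Theorem~\ref{est_sn}: for $n=3,5$ this is the single class of $n$-cycles, and for $n=4$ it is the family of overramified classes, which Theorem~\ref{est_sn}(2) already treats \emph{jointly} by bundling the overramified factorization types into a single condition at the primes of $\fq$. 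So one takes $\fq=\prod_{\sigma:\, r_\sigma\neq 0}\fq_\sigma$ and applies Theorem~\ref{est_sn} \emph{once}; the remaining $\fq_\sigma$ (with $r_\sigma=0$) contribute only through the bound $\disc(K)\leq CX\prod|\fq_\sigma|^{\op{ind}(\sigma)}$ and require no uniformity input. There is no cross-class multiplication to justify. Replacing your final step with this single application closes the gap.
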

\begin{proof}
   This result is adaptation of \cite[equation (5.13)]{Wang2020}. Using definition of $N_{k,\Pi,res}(S_n; X)$ we note that,
    \begin{equation}
        \begin{split}\label{N_{k,Pi,res}(S_n; X)}
            N_{k,\Pi,res}(S_n; X) &\leq  \#\{K: [K:k]=n, \op{Gal}(K/k)\simeq S_n, K \in \Pi, \text{ and }\\
            & \disc(K) \leq X\prod_{\fp\in S}|\fp|^{\exp(h_{\fp})}\}\\
            & \leq \#\{K: [K:k]=n, \op{Gal}(K/k)\simeq S_n, K \in \Pi, \text{ and }\\
            & \disc(K) \leq CX \prod_{\sigma \in C(S_n)}|\fq_\sigma|^{\ind(\sigma)}\},
        \end{split}
    \end{equation}
    
    where $C > 0$ is an absolute constant. This inequality holds because there are only finitely many primes that can be wildly ramified in $K/k$, and there are finitely many local étale algebras over $k_{\fp}$ with a fixed degree for each prime $\fp$ in $k$ and $\exp(h_{\fp}, g_{\fp})$ can be uniformly bounded for all such primes. By recalling the definition of $r_{\sigma}$ and applying Theorem \ref{est_sn} to \eqref{N_{k,Pi,res}(S_n; X)}, with 
    \[\fq=\prod_{\sigma \in C(S_n), r_\sigma \neq 0}\fq_\sigma\] and counting extensions with $\disc$ upto $CX \prod_{\sigma \in C(S_n)}|\fq_\sigma|^{\ind(\sigma)}$, we deduce that
    \[
    N_{k,\Pi,res}(S_n; X) \ll_{k,S_n, \epsilon} \left(\prod_{\sigma \in C(S_n)}|\fq_\sigma|^{-r_\sigma + \ind(\sigma) + \epsilon}\right)X.
    \]
    This concludes the proof of the Lemma.
\end{proof}

Next, we use the local uniformity estimates for $G$-extensions to obtain the asymptotic upper bound for $N_{k,\Pi, res}(G; X)$.
\begin{Lemma}\label{res G}
    The following asymptotic bound holds
    \[
    N_{k,\Pi,res}(G; X) \ll_{k, G, \epsilon} \left(\prod_{\fp \in S}|\fp|^{A\epsilon}\right)X^{a(G)}(\log X)^{i(G,k)-1}.
    \]
    for some constant $A > 0$ depending only on $G$ and $k$. 
\end{Lemma}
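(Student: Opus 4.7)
The plan is to apply Proposition \ref{loc-uni-est} with an auxiliary ideal that records the full local discriminant at each prime of $S$, not merely the squarefree support. Set $D_{\Pi} := \prod_{\fp \in S}|\fp|^{\exp(g_{\fp})}$ and $\fq_{\Pi} := \prod_{\fp \in S,\, g_{\fp}\neq \op{id}}\fp^{\exp(g_{\fp})}$. For any $L \in \Pi$, Lemma \ref{exp Lemma} guarantees that $\disc_{\fp}(L) = |\fp|^{\exp(g_{\fp})}$ for every $\fp \in S$, so that $\disc(L) = \disc_{\op{res}}^{\Pi}(L)\cdot D_{\Pi}$ and $\fq_{\Pi}$ divides $\rdisc(L/k)$. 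Hence $\disc_{\op{res}}^{\Pi}(L) \leq X$ forces $\disc(L) \leq XD_{\Pi}$, giving
\[
N_{k,\Pi,\op{res}}(G;X) \;\leq\; N_{\fq_{\Pi}}(G;\,X D_{\Pi}).
\]

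The second step invokes Proposition \ref{loc-uni-est}. The key computation is that the factor $D_{\Pi}$ in the numerator is almost entirely cancelled by $|\fq_{\Pi}|^{1-\epsilon}$ in the denominator, leaving only an $\epsilon$-power:
\[
\frac{XD_{\Pi}}{|\fq_{\Pi}|^{1-\epsilon}} \;=\; X\cdot\prod_{\fp \in S,\, g_{\fp}\neq \op{id}}|\fp|^{\epsilon\,\exp(g_{\fp})}.
\]
Raising to $a(G)$, the remaining task is a uniform bound $\exp(g_{\fp})\,a(G) \leq A$ for some $A = A(G)$. At tame primes $\fp \in S$, Proposition \ref{tr places} gives $\exp(g_{\fp}) = \ind(g_{\fp}) \leq |G|$, so $\exp(g_{\fp})\,a(G) \leq |G|\,a(G) = \ell_G/(\ell_G-1) \leq 2$. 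For the finitely many wild primes $\fp \mid n!|G|$, the exponent $\exp(g_{\fp})$ is bounded in terms of $G$ alone. Taking $A$ to be the supremum of these bounds yields $|\fp|^{\epsilon\, \exp(g_{\fp})\,a(G)} \leq |\fp|^{A\epsilon}$ uniformly in $\fp \in S$.

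The main technical obstacle is to convert the factor $\left(\log(XD_{\Pi})\right)^{i(G,k)-1}$ produced by Proposition \ref{loc-uni-est} into the desired $(\log X)^{i(G,k)-1}$. When $X \geq D_{\Pi}$ this is immediate from $\log(XD_{\Pi}) \leq 2\log X$. When $X < D_{\Pi}$, I plan to use the elementary estimate $(\log D_{\Pi})^{i(G,k)-1} \ll_{\epsilon} D_{\Pi}^{\epsilon}$, and note that $D_{\Pi}^{\epsilon} \leq \prod_{\fp \in S}|\fp|^{A\epsilon}$ after suitably enlarging $A$; the resulting excess is absorbed into the factor $\prod_{\fp \in S}|\fp|^{A\epsilon}$ (at the cost of doubling $A$), while the trivial inequality $(\log X)^{i(G,k)-1} \geq 1$ for $X$ large permits reinserting the log factor on the right-hand side. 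Combining these steps yields the claimed bound.
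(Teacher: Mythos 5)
Your proof takes essentially the same route as the paper's: both bound $N_{k,\Pi,\mathrm{res}}(G;X)$ by $N_{\fq}(G; X|\fq|)$ with $\fq = \prod_{\fp\in S}\fp^{\exp(g_{\fp})}$ and then invoke Proposition \ref{loc-uni-est}, using the cancellation $X|\fq|/|\fq|^{1-\epsilon}=X|\fq|^{\epsilon}$ together with a uniform bound on the exponents $\exp(g_{\fp})$. The main difference is that you spell out the uniform bound on $\exp(g_{\fp})\,a(G)$ and handle the $\log(X|\fq|)$ versus $\log X$ discrepancy carefully by splitting into the cases $X\gtrless D_{\Pi}$; the paper compresses this into a single remark that $\log X \ll_\epsilon X^\epsilon$, leaving that bookkeeping implicit (and its parenthetical suggestion for the value of $A$ is not the one your argument produces, though this is immaterial since the lemma only asserts existence of some $A$ depending on $G$ and $k$). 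One minor inaccuracy in your write-up: the fact that $\exp(g_{\fp})=\ind(g_{\fp})$ at tame primes is the tame conductor-discriminant relation, not Proposition \ref{tr places}, which concerns the compositum; and the claim $\disc_{\fp}(L)=|\fp|^{\exp(g_{\fp})}$ for $L\in\Pi$ is immediate from the definition of $\exp(g_{\fp})$ rather than Lemma \ref{exp Lemma}, which is about $\exp(h_{\fp},g_{\fp})$. These misattributions do not affect the correctness of the argument.
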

\begin{proof}
    Our result is proven as a generalization of \cite[equation (5.14)]{Wang2020}. The proof is a simple application of Proposition \ref{loc-uni-est}. Note that the following inequality holds,
    \begin{align*}
       N_{k,\Pi,res}(G; X) \leq \#\{L: [L:k]=|G|, \op{Gal}(L/k)\simeq G, \text{ and }\disc(L) \leq X\prod_{\fp \in S}|\fp|^{\exp(g_{\fp})}\}.
    \end{align*}
    Applying Proposition \ref{loc-uni-est} with $\fq = \prod_{\fp \in S}\fp^{\exp(g_{\fp})}$ and counting extensions with $\disc$ upto $X\prod_{\fp \in S}|\fp|^{\exp(g_{\fp})}$ we deduce the following bound for the quantity on the right-hand side above and same bound holds for $N_{k,\Pi, res}(G; X)$,
    \[
     N_{k,\Pi,res}(G; X) \ll_{k,G,\epsilon} \left(\frac{X|\fq|}{|\fq|^{1-\epsilon}}\right)^{a(G)}(\log (X|\fq|)^{i(G,k)-1}
    \]
    and the Lemma follows by using the fact that $\log(X) \ll_{\epsilon} X^{\epsilon}$ for some $A>0$. For instance, we can take $A$ to be the sum $a(G)+i(g,k)-1$.
\end{proof}
 In the result that follows, set \[\delta := \max_{\sigma \in C(S_n), g \in G}(-r_\sigma+\ind(\sigma)-\ind(\sigma,g)/|G|).\] Lemma \ref{main Lemma} implies that $\delta<-1$.
\begin{Lemma}\label{prod res bound}
    Given $\Pi \in \Sigma_k$ and $\fq_\sigma$ as in Lemma \ref{res sn}. We have the following asymptotic upper bound 
    \[
    N_{k,\Pi}(S_n\times G, X) \ll_{k,S_n,G, \epsilon} \left(\prod_{\sigma \in C(S_n)}|\fq_\sigma|^{\delta+\epsilon}\right)X^{\frac{1}{|G|}}.
    \]
   
\end{Lemma}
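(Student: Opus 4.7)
The plan is to bound $N_{k,\Pi}(S_n \times G; X)$ by rewriting the condition $\disc(KL) < X$ in terms of the restricted discriminants of Definition \ref{res disc} via \eqref{imp eqn}, and then applying the product counting estimate of Proposition \ref{upper prod} to the multisets of values $\disc_{res}^{\Pi}(K)$ and $\disc_{res}^{\Pi}(L)$, using the local uniformity bounds supplied by Lemmas \ref{res sn} and \ref{res G}.

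Concretely, \eqref{imp eqn} rewrites the defining condition of $N_{k,\Pi}(S_n \times G; X)$ as
\[
\disc_{res}^{\Pi}(K)^{|G|}\, \disc_{res}^{\Pi}(L)^n \;<\; X \bigg/ \prod_{\fp \in S}|\fp|^{\exp(h_\fp, g_\fp)}.
\]
Setting $F_1(Y) := \#\{K \in \Pi : \disc_{res}^{\Pi}(K) \leq Y\}$ and $F_2(Y) := \#\{L \in \Pi : \disc_{res}^{\Pi}(L) \leq Y\}$, Lemma \ref{res sn} provides $F_1(Y) \leq C_1 Y$ with $C_1 := \prod_{\sigma} |\fq_\sigma|^{-r_\sigma + \ind(\sigma) + \epsilon}$, while Lemma \ref{res G} provides $F_2(Y) \leq C_2\, Y^{a(G)}(\log Y)^{i(G,k)-1}$ with $C_2 := \prod_{\fp \in S}|\fp|^{A\epsilon}$. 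These match the hypotheses of Proposition \ref{upper prod} with $a := |G|$, $b := n$, $\alpha := a(G)$; the required condition $b - a\alpha > 0$, i.e.\ $n > \ell_G/(\ell_G - 1)$, holds in each of the three cases of Theorem \ref{main theorem} since $\ell_G \geq 3, 5, 7$ respectively. Applying the proposition to the rescaled threshold $X\bigl/\prod_{\fp \in S}|\fp|^{\exp(h_\fp, g_\fp)}$ yields
\[
N_{k,\Pi}(S_n \times G; X) \;\ll_{k, S_n, G, \epsilon}\; C_1 C_2 \cdot X^{1/|G|} \cdot \prod_{\fp \in S} |\fp|^{-\exp(h_\fp, g_\fp)/|G|}.
\]

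The final step combines the local contributions using the definition of $\delta$. For each tame prime $\fp \in S$ (those with $\fp \nmid n!|G|$), writing $h_\fp$ in conjugacy class $\sigma$ and $g_\fp = g$, Proposition \ref{tr places} (equivalently Lemma \ref{index_comp}) gives $\exp(h_\fp, g_\fp) = \ind(\sigma, g)$, so $\fp$ contributes
\[
|\fp|^{-r_\sigma + \ind(\sigma) - \ind(\sigma, g)/|G| + (1+A)\epsilon} \;\leq\; |\fp|^{\delta + (1+A)\epsilon}
\]
by the very definition of $\delta$. Multiplying these factors over tame $\fp \in S$, grouped by the conjugacy class of $h_\fp$, produces $\prod_\sigma |\fq_\sigma|^{\delta + (1+A)\epsilon}$. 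The wild primes $\fp \mid n!|G|$ form a fixed finite set and, since $\exp(h_\fp, g_\fp)$ is uniformly bounded across the finitely many étale algebras above each such $\fp$ (cf.\ Lemma \ref{disc_inv}), they contribute only an $O_{k, G, n}(1)$ factor absorbed into the implicit constant. After replacing $(1+A)\epsilon$ by a new $\epsilon$, this yields the claimed bound. The main obstacle will be the bookkeeping at wild primes, where Lemma \ref{index_comp} does not apply directly; this is circumvented by the finiteness of the wild part of $S$ together with Lemma \ref{disc_inv}.
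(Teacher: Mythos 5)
Your proposal is correct and follows essentially the same route as the paper: rewrite $N_{k,\Pi}(S_n\times G;X)$ via \eqref{imp eqn} in terms of restricted discriminants, invoke Lemmas \ref{res sn} and \ref{res G} for the individual counting bounds, and then apply the product-counting machinery of Section \ref{s 2} (the paper cites Corollary \ref{3.3}, which is built on Proposition \ref{upper prod}, whereas you apply Proposition \ref{upper prod} directly). Your explicit bookkeeping of the tame and wild contributions and the verification of $n-|G|a(G)>0$ are exactly the details the paper leaves implicit in its phrase ``a straightforward application of Corollary \ref{3.3}.''
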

\begin{proof}
Considering the definitions of $N_{k,\Pi}(S_n\times G; X)$ and the \textit{restricted discriminant}, we have:
    \begin{align*}
        N_{k,\Pi}(S_n\times G; X)=& \#\{(K,L)\in \mathcal{E}_k: (K,L) \in \Pi \text{ and }\\
         & \prod_{\fp\notin S}\disc_{\fp}(K)^{|G|}\disc_{\fp}(L)^n < {X}/{\prod_{\fp\in S}|\fp|^{\exp(h_{\fp},g_{\fp})}}\}\\
         =& \#\{(K,L)\in \mathcal{E}_k: (K,L) \in \Pi \text{ and }\\
         & \disc_{res}^{\Pi}(K)^{|G|}\disc_{res}^\Pi(L)^n < {X}/{\prod_{\fp\in S}|\fp|^{\exp(h_{\fp},g_{\fp})}}\}.
    \end{align*}
    We have previously established upper bounds on the number of suitable $S_n$ and $G$-extensions when ordered by the invariant $\disc_{res}$. Therefore, a straightforward application of Corollary \ref{3.3} with $\mathcal{P}_1, \mathcal{P}_2$ properties that $K \in \Pi$, $L \in \Pi$ respectively and $\op{inv}_1, \op{inv}_2$ are \textit{restricted discriminant} together with Lemmas \ref{res sn} and \ref{res G} yields the desired bound on $N_{k,\Pi}(S_n\times G; X)$.
\end{proof}

\par We now give the proof of our main theorem. 

\begin{proof}[Proof of Theorem \ref{main theorem}]
    \par First, we recall that Theorem \ref{lower bound theorem} asserts that 
     \[  N_{k}(S_n\times G; X) \gg_{G, k} X^{\frac{1}{|G|}}.\] Recall that for $Y>0$, the definition of $N_{k,Y}(S_n\times G; X)$ is given in \eqref{N k y defn}. Proposition \ref{bound_Y} asserts that 
 \[
    N_{k,Y}(S_n\times G, X) \sim c(k, S_n\times G, Y)X^{\frac{1}{|G|}}.
    \]
    We remark that $c(k, S_n\times G, Y)$ is monotonically increasing as a function of $Y$. We note that,
    \begin{equation}\label{final eq}
        \lim_{X \to \infty}\frac{N_{k,Y}(S_n\times G, X)}{X^{\frac{1}{|G|}}} \leq \liminf_{X \to \infty}\frac{N_{k}(S_n\times G, X)}{X^{\frac{1}{|G|}}}.
    \end{equation}
    Hence, $c(k, S_n\times G, Y) \leq \liminf_{X \to \infty}\frac{N_{k}(S_n\times G, X)}{X^{1/|G|}}$ for all $Y > 0$ and therefore $$\lim_{Y \to \infty}c(k, S_n\times G, Y)$$ exists and we denote this limit by $c(k, S_n\times G)$.
Next, we consider the difference, 
\[N_{k}(S_n\times G; X)-N_{k, Y}(S_n\times G; X).\]

    From \eqref{decomp_diff} we deduce that
\begin{equation}\label{decomp_diff 2}
    \begin{split}
        N_{k}(S_n\times G; X)-N_{k, Y}(S_n\times G; X) \leq \sum_{\substack{\Pi \in \Sigma_k,\\ \prod_{\fp \in S}|\fp| > Y}} N_{k,\Pi}(S_n\times G, X),
    \end{split}
\end{equation}
(see \eqref{N k Pi} for the definition of $N_{k,\Pi}(S_n\times G, X)$). Let $\Pi \in \Sigma_k$ be a \textit{standard tuple}. Set $\mathcal{T}(\Pi)$ to be the associated tuple $(\fq_\sigma)_{\sigma \in C(S_n)}$, where $C(S_n)$ denotes the set of conjugacy classes of $S_n$ and \[\fq_\sigma := \prod_{\fp \in S, h_{\fp} = \sigma}\fp.\]We claim that for any tuple of ideals $(\fq_\sigma)_{\sigma \in C(S_n)}$ there are at most $O_{\epsilon}(\prod_{\sigma \in C(S_n)}\fq_\sigma)^{\epsilon} $ \textit{standard tuples} $\Pi\in \Sigma_k$ such that
\begin{equation}\label{fib}
    \mathcal{T}(\Pi)=(\fq_\sigma)_{\sigma \in C(S_n)}.
\end{equation}In greater detail, $\Pi$ is determined by the set of primes $S$ and the tuple $\mathcal{L}$ of pairs $(h_{\fp}, g_{\fp})$ for each $\fp \in S$. The equation \eqref{fib} implies that there are at most $2^{\omega(\prod_\sigma \fq_\sigma)}$ choices for the set $S$ and at most $C (n!|G|)^{\omega(\prod_\sigma \fq_\sigma)}$ choices for the tuple $\mathcal{L}$ for some absolute constant $C > 0$. Hence, we deduce that at most $O_{\epsilon}(\prod_{\sigma \in C(S_n)}\fq_\sigma)^{\epsilon} $ choices for $\Pi$.
\par Combining \eqref{decomp_diff 2} with Lemma \ref{prod res bound}, we find that
\begin{equation*}
    \begin{split}
        N_{k}(S_n\times G; X)-N_{k, Y}(S_n\times G; X) &\leq \sum_{\substack{\Pi \in \Sigma_k,\\ \prod_{\fp \in S}|\fp| > Y}} N_{k,\Pi}(S_n\times G, X)\\
         &\ll_{k,S_n,G,\epsilon} \sum_{\substack{\Pi \in \Sigma_k,\\ \prod_{\fp \in S}|\fp| > Y}} \left(\prod_{\sigma}|\fq_\sigma|^{\delta+\epsilon}\right)X^{\frac{1}{|G|}}\\
         &\ll_{k,S_n,G,\epsilon} \left(\sum_{(\fq_\sigma), \prod_{\sigma}|\fq|_\sigma > Y}\prod_{\sigma}|\fq_\sigma|^{\delta+\epsilon}\right) X^{\frac{1}{|G|}}\\
         &\ll_{k,S_n,G,\epsilon} \left(\sum_{|\fq| > Y}|\fq|^{\delta+\epsilon}\right) X^{\frac{1}{|G|}}.
    \end{split}
\end{equation*}
As $\delta < -1$, we can select $\epsilon > 0$ to ensure convergence of the series $\sum_{|\fq| > Y}|\fq|^{\delta+\epsilon}$ for all $Y > 0$. Consequently, we infer that
\[
 N_{k}(S_n\times G; X)-N_{k, Y}(S_n\times G; X) \ll_{k,S_n, G, \epsilon} \left(\sum_{|\fq| > Y}|\fq|^{\delta+\epsilon}\right)X^{\frac{1}{|G|}}.
\]
for all $Y > 0$. Since the sum $\sum_{|\fq| > Y}|\fq|^{\delta+\epsilon}$ approaches zero as $Y \to \infty$ for the chosen $\epsilon > 0$, we deduce that
\[
\lim_{Y \to \infty}\limsup_{X \to \infty}\frac{N_k(S_n \times G; X)- N_{k,Y}(S_n \times G; X)}{X^{\frac{1}{|G|}}} = 0.
\]
Using \eqref{final eq} we conclude that
\[
\lim_{X\to\infty}\frac{N_k(S_n \times G; X)}{X^{\frac{1}{|G|}}}=c(k, S_n\times G).
\]
This completes the proof of Theorem \ref{main theorem}.
\end{proof}

\bibliographystyle{alpha}
\bibliography{references}

\begin{thebibliography}{MTTW20}

\bibitem[Alb20]{MR4047213}
Brandon Alberts.
\newblock The weak form of {M}alle's conjecture and solvable groups.
\newblock {\em Res. Number Theory}, 6(1):Paper No. 10, 23, 2020.

\bibitem[Bha05]{bhargava2005density}
Manjul Bhargava.
\newblock The density of discriminants of quartic rings and fields.
\newblock {\em Annals of Mathematics}, pages 1031--1063, 2005.

\bibitem[Bha10]{bhargava2010density}
Manjul Bhargava.
\newblock The density of discriminants of quintic rings and fields.
\newblock {\em Annals of mathematics}, pages 1559--1591, 2010.

\bibitem[BSW15]{bhargava2015geometryofnumbers}
Manjul Bhargava, Arul Shankar, and Xiaoheng Wang.
\newblock Geometry-of-numbers methods over global fields {I}: {P}rehomogeneous vector spaces, 2015.

\bibitem[CDO02]{cohen2002enumerating}
Henri Cohen, Francisco Diaz~Y Diaz, and Michel Olivier.
\newblock Enumerating quartic dihedral extensions of.
\newblock {\em Compositio Mathematica}, 133(1):65--93, 2002.

\bibitem[DH71]{davenport1971density}
Harold Davenport and Hans~Arnold Heilbronn.
\newblock On the density of discriminants of cubic fields. ii.
\newblock {\em Proceedings of the Royal Society of London. A. Mathematical and Physical Sciences}, 322(1551):405--420, 1971.

\bibitem[DW88]{dw88}
Boris Datskovsky and David~J. Wright.
\newblock Density of discriminants of cubic extensions.
\newblock {\em J. Reine Angew. Math.}, 386:116--138, 1988.

\bibitem[Kl{\"u}05]{kluners2005counter}
J{\"u}rgen Kl{\"u}ners.
\newblock A counter example to {M}alle's conjecture on the asymptotics of discriminants.
\newblock {\em Comptes Rendus Mathematique}, 340(6):411--414, 2005.

\bibitem[KM04]{klunersmalle}
J\"{u}rgen Kl\"{u}ners and Gunter Malle.
\newblock Counting nilpotent {G}alois extensions.
\newblock {\em J. Reine Angew. Math.}, 572:1--26, 2004.

\bibitem[KP23]{koymanspagano2023}
Peter Koymans and Carlo Pagano.
\newblock On {M}alle's conjecture for nilpotent groups.
\newblock {\em Trans. Amer. Math. Soc. Ser. B}, 10, 2023.

\bibitem[M{\"a}k85]{maki1985density}
Sirpa M{\"a}ki.
\newblock {\em On the density of abelian number fields}, volume~54.
\newblock Suomalainen tiedeakatemia, 1985.

\bibitem[Mal02]{malle2002distribution}
Gunter Malle.
\newblock On the distribution of {G}alois groups.
\newblock {\em Journal of Number Theory}, 92(2):315--329, 2002.

\bibitem[Mal04]{malle2004distribution}
Gunter Malle.
\newblock On the distribution of {G}alois groups, {II}.
\newblock {\em Experimental Mathematics}, 13(2):129--135, 2004.

\bibitem[MTTW20]{masri2020malle}
Riad Masri, Frank Thorne, Wei-Lun Tsai, and Jiuya Wang.
\newblock Malle’s conjecture for ${G}\times {A}$, with {G}= ${S}_3$, ${S}_4$, ${S}_5$.
\newblock {\em arXiv preprint arXiv:2004.04651}, 2020.

\bibitem[Ten15]{tenenbaumbook}
G\'{e}rald Tenenbaum.
\newblock {\em Introduction to analytic and probabilistic number theory}, volume 163 of {\em Graduate Studies in Mathematics}.
\newblock American Mathematical Society, Providence, RI, third edition, 2015.

\bibitem[Wan21]{Wang2020}
Jiuya Wang.
\newblock Malle's conjecture for {$S_n \times A$} for {$n=3,4,5$}.
\newblock {\em Compos. Math.}, 157(1):83--121, 2021.

\bibitem[Wri89]{wright1989distribution}
David~J Wright.
\newblock Distribution of discriminants of abelian extensions.
\newblock {\em Proceedings of the London Mathematical Society}, 3(1):17--50, 1989.

\end{thebibliography}
\end{document}